\pgfmathsetmacro{\nodebasesize}{1} % A node with a value of one will have this diameter
\pgfmathsetmacro{\nodeinnersep}{0.1}
\newcommand{\posnode}[3]{% position, name, options, value, label
    \pgfmathsetmacro{\minimalwidth}{sqrt(#3*\nodebasesize)}
    \node[%#3,
minimum width=\minimalwidth*.5cm,inner sep=\nodeinnersep*.5cm,circle,draw] (#2) at (#1) {};  %{#5};
}
           \newcommand{\labnode}[5]{% position, name, label options +fill options,  value label
    \pgfmathsetmacro{\minimalwidth}{sqrt(#4*\nodebasesize)}
    \node[#3,#5,minimum width=\minimalwidth*.5cm,inner sep=\nodeinnersep*.5cm,circle,draw] (#2) at (#1) {};
}
\newtheorem{thm}{Theorem}[section]
\newtheorem{lem}[thm]{Lemma}
\newtheorem{prop}[thm]{Proposition}
\theoremstyle{definition}
\newtheorem{ex}[thm]{Example}
\newenvironment{newlist}
   {\begin{list}{}{\setlength{\labelsep}{0.25cm}
\setlength{\itemsep}{-0.1cm}
\setlength{\topsep}{0.1cm}
                   \setlength{\labelwidth}{0.65cm}
                      \setlength{\leftmargin}{0.9cm}}}
   {\end{list}}
\newenvironment{widenewlist}
   {\begin{list}{}{\setlength{\labelsep}{0.2cm}
                   \setlength{\labelwidth}{1.2cm}
                      \setlength{\leftmargin}{1.3cm}}}
   {\end{list}}
\newcommand{\twiddle}[1]{\smash{\underset{\raise.375ex\hbox{$\smash\sim$}}      {#1}}\vphantom{\underline{#1}}}
\newcommand{\twiddleodd}[1]{
\smash{\underset{\lower.5ex\hbox{$\smash{\widetilde{\phantom{mm}}}
$}
}{#1%\Zed_{2n+1}
}}
}
\newcommand{\twiddleeven}[1]{
\smash{\underset{\lower.65ex\hbox{$\smash{
\widetilde{\phantom{rr\!\!r}}}
$}
}{#1%\Zed_{2n}
}}}
\newcommand{\ov}[1]{\overline{#1}}
\newcommand{\twiddlek}{\smash{\underset{\lower.4ex\hbox{$\smash{
\widetilde{\phantom{w\!\!u}}}
$}
}{\Zed_{k}}}}
\newcommand{\grk}{\graph k}
\newcommand{\grh}{\graph h}
\DeclareMathOperator{\HOM}{Hom}
\newcommand{\PHOM}{\HOM_{\text{\rm p}}}
\newcommand{\PHOMt}{\HOM_{\text{\rm p}}^{\text{\bf t}}}
\newcommand{\delp}{\alpha^+}  %{\delta^+}
\newcommand{\delm}{\alpha^-}     %    {\delta^-}
\newcommand{\delpv}{\delta^+}  %{\delta^+}
\newcommand{\delmv}{\delta^-}     %    {\delta^-}
\newcommand{\dotbigcup}{\overset{\raise.6ex\hbox{$\smash\cdot$}\,}{\smash\bigcup\,}} 
\newcommand{\du}{\smash{\,\cup\kern-0.45em\raisebox{1ex}{$\cdot$}}\,\,}
\newcommand{\class}[1]{\mathcal{#1}}
\newcommand{\cat}[1]{\boldsymbol{\mathscr{#1}}}  
\newcommand{\alg}[1]{\mathbf{#1}}
\newcommand{\fnt}[1]{\mathsf{#1}}
\newcommand{\E}{\fnt{E}}
\newcommand{\D}{\fnt{D}}
\newcommand{\ope}[1]{\mathbb{#1}}
\newcommand{\defn}[1]{\emph{#1}}
\newcommand{\DM}{\class{DM}}
\newcommand{\CCD}{\class{D}}
\newcommand{\PEW}[1]{\End_{\text{\rm p}}^{\text{\bf t}} (\W_{#1})} 
\newcommand{\CV}{\class{V}}
\newcommand{\CQ}{\class{Q}}
\newcommand{\Kl}{\cat{K\!\! A}}
\newcommand{\Kla}{\Kl_u}
\newcommand{\A}{\alg{A}}
\newcommand{\B}{\alg{B}}
\newcommand{\M}{\alg{M}}
\renewcommand{\P}{\alg{P}}
\renewcommand{\S}{\alg{S}}
\newcommand{\T}{\alg{T}}
\newcommand{\Q}{\alg{Q}}
\newcommand{\X}{\alg{X}}
\newcommand{\Y}{\alg{Y}}
\newcommand{\W}{\alg{W}}
\newcommand{\Tp}{\mathcal{T}}
\newcommand{\MT}{\twiddle{\M}}
\newcommand{\Zed}{\alg{Z}}
\newcommand{\two}{\boldsymbol 2}
\newcommand{\SA}{\cat{S\!A}}  
\newcommand{\CA}{\cat{A}}
\newcommand{\SM}{\cat{S\!\!M}} 
\newcommand{\CVM}[1]{\CV(\W_{#1})}
\newcommand{\CVA}[1]{\CV(\Zed_{#1})}
\newcommand{\CQM}[1]{\CQ(\W_{#1})}
\newcommand{\CX}{\cat{X}}
\newcommand{\CY}{\cat{Y}}  
\newcommand{\CM}{\mathcal{M}}
\newcommand{\CMT}{\twiddleeven{\CM}}
\newcommand{\F}{\alg{F}}  
\newcommand{\w}{\omega}
\renewcommand{\epsilon}{\varepsilon}
\newcommand{\sub}[1]{_{_{\kern-.9pt{\scriptstyle #1}}}}
\newcommand{\medsub}[2]{#1\lower0.6ex\hbox{$\scriptstyle{#2}$}}
\newcommand{\eA}[1]{\medsub e {\kern-0.75pt\A\kern-0.75pt}(#1)}
\newcommand{\esub}[1]{\medsub e {\kern-0.75pt #1 \kern-0.75pt}}
\newcommand{\epsub}[1]{\medsub \varepsilon {\kern-1.25pt #1}}
\newcommand{\esubA}{\medsub e {\kern-0.75pt\A\kern-0.75pt}}
\DeclareMathOperator{\ISP}{\ope{ISP}}
\DeclareMathOperator{\IScP}{\ope{IS}_c\ope{P}^{+}}
 \DeclareMathOperator{\HSP}{\ope{HSP}}
 \DeclareMathOperator{\si}{\rm si}
 \DeclareMathOperator{\dom}{\rm dom}
\DeclareMathOperator{\graph}{\rm graph}
\newcommand{\id}{\mathop{}\mathopen{}\mathrm{id}} 
\DeclareMathOperator{\img}{im}
\DeclareMathOperator{\End}{End}  
\newcommand{\Endt}{\End^{\mathbf t}}
\newcommand{\PEZ}[1]{\End_{\text{\rm p}} (\Zed_{#1})} 
\newcommand{\three}{\boldsymbol 3}
\renewcommand{\leq}{\leqslant}
\renewcommand{\geq}{\geqslant}
\begin{document}

\title%[Sugihara Algebras and Monoids]
{Sugihara algebras and Sugihara monoids: \hbox{multisorted dualities}}

%%%%%%%%%%%%%%%% The case of two authors
%%%%%%%%%%%%%%%%%%% and at least two lines t
\author{Leonardo M.  Cabrer}
\address{}
\email{lmcabrer@yahoo.com.ar}
\author{Hilary A. Priestley}
\address{Mathematical Institute, University of Oxford,\\
 Radcliffe Observatory Quarter,\\
Oxford, OX2 6GG, United Kingdom}
\email{hap@maths.ox.ac.uk}

\begin{abstract}
The authors developed in a recent paper natural dualities for finitely generated quasivarieties of Sugihara algebras.  They  thereby  identified
the admissibility algebras for these quasivarieties which, via the Test Spaces Method devised by Cabrer \textit{et al.}, give access to 
a viable method for studying admissible rules within relevance logic, specifically for  extensions of the deductive system $R$-mingle.

This paper builds on the work already done on the theory of natural dualities for Sugihara algebras.  Its purpose is to 
provide an integrated suite of multisorted duality theorems of a uniform type,  
encompassing  finitely generated 
quasivarieties and varieties of both Sugihara algebras and Sugihara
monoids, and  embracing both the odd and the even cases.  
The overarching theoretical 
 framework  of multisorted duality theory  developed here leads on to amenable
representations of free algebras.  More widely, it
%also has the potential
provides a springboard to further applications.  
%\textit{inter alia}, 
%easy access to Priestley-style dualities for the quasivarieties  and varieties of interest and to 
%amenable representations of free algebras.
%
 \end{abstract}

 %\begin{keyword}  
\keywords{Sugihara algebra,   Sugihara monoid, natural duality, multisorted duality}

\subjclass[2010]{Primary:  03G25;  %Other algebras related to logic 
Secondary: 
03B47,  %entailment etc 
%%08C15,  %quasivarieties
08C20  %natural duality  ? make this primary   NO
06D50}  
%%\end{keyword}C29

%\end{frontmatter}

\maketitle

\section{Introduction} \label{sec:intro}

Sugihara monoids and algebras, particularly in the odd case,
have attracted 
 interest  on a number of fronts.     
This has stemmed in part 
 from  investigations of  the rich class of residuated lattices
and of  associated substructural logics.  See for example the
introductory  survey \cite{BT02} and the comprehensive monograph \cite{GJKO} for the general theory and context and
\cites{FG17, GR12, GR15, OR07, MRW18}  for indications of where the special class of  Sugihara monoids fits into a wider algebraic framework.    
%%%%%

From the perspective of logic, the motivation for studying Sugihara monoids originates in the work of Dunn \cite{Du70} and Anderson and Belnap \cite{AB75}; see also the contextual comments in \cite{OR07}. 
Sugihara algebras provide complete
algebraic semantics for $\text{RM}$, the algebraizable deductive system $R$-mingle. 
 Sugihara monoids  include a constant in their algebraic language  so as to  model $
\text{RM}^{\mathbf{t}}$,  \textit{viz.} $R$-mingle afforced with Ackermann's constant.  

%hp1301 insert 
%lc1801: I do like both paragraphs, however they sound a bit repetitive. Can we merge them in one? 
%hp2201  Will ponder.  Not done yet
Our introductory remarks above indicate that effective mathematical tools for studying Sugihara algebras and monoids 
would be of interest and value both as regards these algebraic
structures \textit{per se} and   for the algebraic and relational 
semantics for related deductive systems.  Our objective in this paper is to enlarge  the armoury of such tools, as a basis for further theory.  
Our principal theorems provide, within a uniform framework,  a multisorted natural duality 
for every variety generated by a finite subdirectly algebra in either 
$\SA$ or~$\SM$.   Here  the use of dual categories 
based on multisorted topological relational structures is central to 
our approach.  
Our  longer-term aim is for the present work to lead on to 
 a full exploration of alternative relational models, and to a structural analysis of  the  classes $\SA$ of Sugihara algebras and 
$\SM$ of Sugihara monoids based on these models;   we emphasise
that in this programme we treat $\SA$ and $\SM$ together, and on an equal footing.
We  cannot achieve our objectives within a single paper.
Accordingly, 
the material on multisorted dualities we present here, and specifically our theorems in 
Sections~\ref{sec:alg-multi} and~\ref{sec:mon-multi}, may be seen in part as a stepping stone along the way to  our ultimate goal.  
Paper~\cite{CPfree}, devoted to free algebras, provides a 
foretaste of what our new methods can achieve.

%Our work on multisorted dualities 
% has the longer-term goal of leading
%to a full exploration of alternative relational models for
%finitely generated quasivarieties and varieties of  $\SA$ and of~$\SM$, and to a structural analysis based on these models.
%(Our appendix hints at such possibilities.) 
%However this cannot be achieved within a single paper.  
%Accordingly,  our objectives in this paper are more limited, and the material we present, and specifically our theorems in 
%Sections~\ref{sec:alg-multi} and~\ref{sec:mon-multi}, may be seen in part as stepping stones along the way to  our ultimate goal.
%

%%%%%%%%%%%%%%%%%%%%%%%%%%%%%%%%%%%%%%% 

Our %previous 
paper \cite{CPsug}  was motivated by 
a very specific problem:  
 to find a computationally feasible method for studying admissible rules for certain extensions of 
$\text{RM}$.  
See \cite{GM16} and \cite{CFMP} and the references therein for background. % and possible general  methodologies.
Compelling evidence for the power %success
 of the duality-based approach 
in \cite{CFMP}  was given in \cite{CPsug}*{Section~8}. 
There are significant differences between   $\text{RM}$
and $\text{RM}^{\mathbf{t}}$  as regards 
structural completeness properties, with strong assertions being available for the latter. 
This led us  in \cite{CPsug} to focus  on the variety~$\SA$ 
and specifically on its finitely generated quasivarieties.
The  
generators for these quasivarieties are the algebras $\Zed_k$
for $k \geq 1$, where 
$\Zed_k$ has a lattice reduct which is a chain with $\left[
\frac{k+1}{2}\right]$ elements.  Analogously, the finitely
generated quasivarieties of~$\SM$ have generators 
$\W_k$, where $\W_k$ has $\SA$-reduct~$\Zed_k$ and has
$\mathbf t$ interpreted as~$0$ if~$k$ is odd and as~$1$ if~$k$ is even.  
(Details of the definitions are recalled in Section~\ref{sec:prelim}.)
A Sugihara algebra or monoid  is \defn{odd}
if its  involution $\neg$   has a (necessarily unique)  fixed point, and \defn{even} otherwise.  
Thus $\Zed_k$ and $\W_k$ are odd if and only if~$k$ is odd.
The general structure theory of Sugihara monoids, and of their 
relational models, 
 is most  highly developed in the odd 
case;  see in particular
 \cites{FG17,GR12,GR15, OR07}.  

In~\cite{CPsug} we presented 
  dualities  (specifically natural dualities which have the property of being strong) 
for all  the  quasivarieties $\SA_k := \ISP(
\Zed_k)$ of $\SA$.
 Our representation theorems took  
different 
forms in the odd and even cases, with \cite{CPsug}*{Theorem~6.4}
(even case) looking more complicated than \cite{CPsug}*{Theorem~4.3}  (odd case).  
 At the heart of this dichotomy is the fact that
the  quasivarieties $ \ISP(\Zed_{2m-1})$  are varieties whereas  
 $\ISP(\Zed_{2m}) \subsetneq  \HSP(\Zed_{2m})$; 
 the variety generated
by  $\Zed_{2m}$ is $\ISP(\Zed_{2m}, \Zed_{2m-1})$. 
  (We adopt the customary notation for class operators.) 
 Details of these claims, and the analogues for Sugihara monoids,
with $
\Zed_k$ replaced by $
\W_k$,   are given in Section~\ref{sec:prelim}.   
These  observations suggest that we 
should switch  attention from 
quasivarieties to the varieties  they generate.  The machinery 
of multisorted dualities makes this possible, and brings other benefits too.

%%%%%%%%%%%%%%%%%%%%%%%%%%%%%%%%%%%%%%%%%%%

%%%%%%%%%%%%%%%%%%%%%%%%%%%%%%%%%%%%%%%%%
We shall assume that the reader is familiar with
the basic theory of natural dualities, as summarised in black box fashion in \cite{CPsug}*{Section~3}.
 Section~\ref{sec:multisorted} below provides a brief formal introduction
to the use of multisorted dual categories, 
with the theory tailored to our needs.
 We are mindful that 
multisorted duality  theory is less  well known than it deserves to be.  Accordingly,  by way of salesmanship,
we   include a short appendix to our paper to set in context 
 our main results in Sections~\ref{sec:alg-multi} and~\ref{sec:mon-multi}. The appendix, aimed at   
 those new to multisorted dualities,  illustrates  the 
key ideas using  two simple
 examples: Kleene algebras and 
Kleene lattices and shows  how a multisorted
perspective  facilitates making connections between 
natural dualities and  
alternative, Priestley-style, representations.

Once Section~\ref{sec:multisorted}'s   theoretical foundations are in place, we  
 focus on Sugihara algebras and monoids.  Section~\ref{sec:prelim} recalls facts from \cite{CPsug} on Sugihara algebras and assembles the information we need, likewise, about Sugihara monoids.  
Section~\ref{sec:alg-multi}
begins with Theorem~\ref{thm:OddMulti}, the  translation into a two-sorted form of
\cite{CPsug}*{Theorem~4.3}.  We highlight  the similarities and the differences between what are essentially two formulations of  the same result.  By contrast,  Theorem~\ref{thm:EvenMulti} enters new territory.  It supplies a 
three-sorted strong duality  for 
$\CVA{2m}=\HSP(\Zed_{2m})$, the variety generated 
by $\Zed_{2m}  $.   
This duality is easier to work with than the single-sorted
one with $2m-1$ carriers for the quasivariety $\ISP(\Zed_{2m})$
\cite{CPsug}*{Theorem~6.4}.  
In Section~\ref{sec:mon-multi} we capitalise on our work in the algebras case to exhibit Theorems~\ref{thm:OddMultimon} and \ref{thm:EvenMultimon},  the corresponding duality theorems
for the monoids case.
Very little work is involved.  Moreover, in both the odd and even cases,  
the differences between  the $\SA$ and $\SM$ results are limited and
localised in nature,  so that the effect of including or omitting the 
truth constant from the language feeds through in a transparent 
way  to our representation theorems.

Finally in this introduction 
we  draw attention  to an important limitation on
the applicability of  natural duality theory to Sugihara algebras and monoids. 
Each of the varieties $\SA$ and $\SM$ is generated by a single algebra
 having   the integers as its  lattice
reduct.   There are instances of natural dualities based on
 infinite alter egos, for example those for abelian groups 
(the famous Pontryagin duality) and  Ockham algebras.
However
the  requirements for an infinite generating algebra to have 
  a compatible alter ego are stringent (see the general discussion in
\cite{DHP16}) and 
 the varieties $\SA$ and $\SM$ cannot be brought under the natural duality  umbrella this way.
We argue that, nevertheless,  finitely generated subquasivarieties and subvarieties of a variety which is not finitely generated can carry valuable information, especially for locally finite varieties.  
This was illustrated in
\cite{CPsug}*{Section~8}, concerning admissible rules for $R$-mingle. We note too that
every finitely generated free algebra in $\SA$ or $\SM$
already belongs to some finitely generated subvariety
and can be analysed  there;  
 see \cite{CPfree}.  
 More widely, information 
revealed by applying duality methods  to sub(quasi)varieties of a variety may provide pointers to algebraic features of the variety as a whole. 
  We note the recent study by 
Fussner and Galatos  \cite{FG17} of the algebraic structure of
(odd) Sugihara monoids and of  relational  models for them.  At this stage 
there is little overlap  between our work and theirs in that the novelty of our approach rests on the use of multisorted natural dualities whereas they make only very limited use of natural duality theory and then only in a single-sorted form.
 %(see Section~\ref{sec:why-multi}).  

%%%%%%%%%%%%%%%%%%%%%%%%%%%%%%%%%%%%%%%%%%%%%
%%%%%%%%%%%%%%%%%%%%%%%%%%%%%%%%%%%%%%%%%%%%%

%%%%%%%%%%%%%%%%%%%%%%%%%%%%%%%%%%%%
%%%%%%%%%%%%%%%%%%%%%%%%%%%%%%%%%%%%%%%%%%%%% 

\section{The framework of multisorted natural  dualities}\label{sec:multisorted}

Our objective in using duality theory 
is to be able to study a class of algebras $\CA$ by setting up
a  well-behaved dual category equivalence between $\CA$ and
a category  $\CX$ so that problems about~$\CA$ can  be faithfully translated into problems about~$\CX$ which one 
anticipates  will be more tractable.  
Our approach to the admissible rules problem 
originating in \cite{CFMP} and exploited in \cite{CPsug} relies
crucially on the use of strong dualities.  
As the term is used in natural duality theory \cite{CD98},
a  strong duality for a finitely generated quasivariety $\CA = \ISP(\M)$ sets up a dual equivalence between~$\CA$ and a
category $\CX$ of topological relational structures, generated
as a topological quasivariety by~$\MT$, a compatible `alter ego'
for $\M$ which is injective in~$\CX$  (the technical details 
need not concern us here). 
In \cite{CPsug}, and equally in this paper 
and its successors,
the existence of dualities is guaranteed by general theory and our 
assumptions.  But more is at stake: 
 we seek 
dualities 
which are based on economical, and so amenable, 
alter egos.   We achieved this in \cite{CPsug} by using 
the method  of generalised piggybacking which was
introduced in \cite{DP87} and is based on a theory of multisorted
natural dualities.  
But in \cite{CPsug}  multisortedness was kept covert.  Our dualities there used
dual categories whose objects are traditional single-sorted
structures;  we obtained the alter egos determining the dual categories using a multi-carrier version of the piggyback method; 
strongness can also be engineered \cite{CPsug}*{Theorem~3.3}. 

In this paper 
we extol the virtues of employing multisorted 
duality theory  in its full-blown form. Now the  
dual categories
 have objects which are multisorted topological structures.  
As a byproduct,   this gives us the freedom to seek a duality
for a class $\ISP(\CM)$, where $\CM$ is a finite set of finite algebras over a common language. 
% \textit{Inter alia}, this allows us to encompass finitely generated varieties which are not quasivarieties,
%what  we shall do for $\HSP(\Zed_{2m})= \ISP(\Zed_{2m}, \Zed_{2m-1})$  in 
%Theorem~\ref{thm:EvenMulti}.   
%This new duality is three-sorted.  As compared with its single-sorted
%counterpart for $\ISP(\Zed_{2m})$  the new  duality is appealingly 
%simple.  We also provide 
%a two-sorted duality for
%$\ISP(\Zed_{2m-1})$, equivalent 
%to 
%\cite{CPsug}*{Theorem~4.3}.   
%It is of particular interest  here that the passage from a  quasivariety  $\ISP(\M)$ to the variety $\HSP(\M)$ can lead to a duality  which is simpler to work with.  
%This phenomenon promises to be  
%valuable  for applications, especially if, as seems likely,
%it is not confined to the Sugihara example.  
%(Of course,  free algebras in a quasivariety
%$\ISP(\M)$ exist, and can equally well be calculated in 
%$\HSP(\M)$.)
%

By way of background,  we remark that 
researchers who have driven  
natural duality theory forward over the past twenty years have 
concentrated on the single-sorted case. % for several reasons.  
%Firstly, 
They were aware that extensions to the multisorted case would be possible, and often straightforward, but that 
the heavier notation involved in working in maximum generality could obscure the underlying ideas;  \cite{DHP16}, a recent major review of  single-sorted piggybacking, 
% confined to the single-sorted case, 
is a case in point.  Moreover, the multisorted \emph{theory} has  evolved
on a `need-to-know' basis,   as potential applications have emerged.

The treatment of multisorted piggybacking and  strong dualities in
\cite{CD98}*{Chapter~7}  omits details and proofs.  
It does anyway not quite meet our needs. We shall instead draw on the paper \cite{DT03} by Davey and Talukder.   This includes a clear summary of the basic theory and also the technical result on strongness that we shall require.  (Furthermore, \cite{DT03} studies dualities for finitely generated varieties of Heyting algebras, so the algebras involved there have a connection with 
Sugihara algebras and monoids, though this relationship is not pursued in 
the present paper.)

We do not need the theory in the most general form possible.
The classes of algebras we wish to consider 
will be of the form 
$\CV(\M)= \HSP(\M)$ or
$\CQ(\M)$, where $\M$ is  finite and has a  distributive 
lattice reduct;  here $\CV(\M) = \HSP(\M)$  and $
\CQ(\M)= \ISP(\M)$ are, respectively, the variety and quasivariety 
generated by~$\M$.  On the universal algebra front, these 
assumptions ensure that $\CV(\M)$ is congruence distributive and that J\'onsson's Lemma applies.  This means that 
$\CV(\M)$ will be expressible as $\ISP(\CM)$, 
where $\CM = \{ \M_1, \ldots, \M_N\}$ is finite and each 
$\M_i \in  \mathbb{HS}$.  As regards duality theory,  we will have access to both the Multisorted Piggyback Duality Theorem
(for piggybacking over the variety  $\CCD_u$  of all distributive
lattices) \cite{CD98}*{Theorem 7.2.1}  and the Multisorted NU Strong Duality  Theorem, making use of the fact that we are dealing with lattice-based algebras, \cite{CD98}*{Theorem 7.1.2}.

	So consider  a class $
\CA = \ISP(\CM)$, where $\CM$ is a finite set of finite algebras
over a common language and having reducts in $\CCD_u$. 
We regard $\CA$ as a category, in which the morphisms are all 
homomorphisms. 
 We seek a dual category $\CX$ whose objects are    multisorted topological  structures and whose morphisms are 
maps which preserve the sorts and are continuous
and structure-preserving.  A  background reference for
multisorted structures is \cite{KK}.
We want to construct $\CX$ in such a way 
that there are functors $\fnt D \colon \CA \to \CX$ 
and $\fnt E \colon \CX \to \CA$ setting up a dual equivalence.  
This needs to  be done in a very specific way, so that  
$\fnt D $ and $\fnt E$ are given by appropriately defined 
hom-functors.

%%%%%%%%%%%%%%%%%%%%%%%%%%

We now explain what constitutes an admissible 
\defn{alter ego}  $\CMT$ for $\CM$, how to construct the dual category  $\CX$ of
 multisorted structures generated by $\CMT$ 
and  set up   a   dual adjunction between 
$\CA$ and $\CX$. 
Our alter ego for  $\CM$  takes  the form 
$
\CMT = (M_1 \du \cdots  \du M_N;  G, H, K, R, \Tp)$.
Here $R$ is  a  set of relations each of which is a subalgebra 
of some $\M_i \times \M_j$, where $i,j \in \{1,\dots,N\}$. 
The sets $G$ and~$H$ consist, respectively, of homomorphisms 
and (non-total) partial homomorphisms, each from 
 some $\M_i$ into some $\M_j$.  The elements of~$K$ 
are the one-element subalgebras of the members of 
$\CM$.  
The alter ego $\CMT$ carries  the disjoint union topology derived
 from the discrete topology on the individual sorts  $M_i$. 
(These assumptions parallel those in the single-sorted case; see
\cite{CPsug}*{Section~3}.)

We form multisorted topological 
$\CMT$-structures~${\X = \X_1 \du \cdots \du \X_N} $ 
where each of the sorts $\X_i$ is a Boolean  space,~$\X$ 
is equipped with the disjoint union topology and
regarded as a structure, $\X$  is of the same type as 
 $\CMT$.  
Thus $\X$ is equipped with a set $R^{\X}$ of relations $r^{\X}$; 
if $r \subseteq M_i \times M_j$, then 
${r^{\X} \subseteq X_i \times  X_j}$;  and similar  statements 
apply to $G^{\X} $, $H^{\X}$ and $K^{\X}$.  
 Given $\CMT$-structures $\X $ and $\Y$, a morphism 
$\phi \colon \X \to \Y$ is  defined to be a continuous map
 \emph{preserving the sorts}, so that $\phi(X_i) \subseteq Y_i$, 
and  $\phi$ preserves the structure.
The terms isomorphism, embedding, etc., 
are then defined appropriately. As in the single-sorted case,
care needs to be taken with embeddings when $H \neq \emptyset$; see \cite{CPsug}*{Section~3}.

We define our dual category $\CX$ to have as objects those
 $\CMT$-structures $\X$ which belong to  a  class of  
topological structures which we shall denote by $\IScP(\CMT)$.  
Specifically, $\CX$ consists of isomorphic copies of closed 
substructures of powers of $\CMT$. 
Here powers are formed  `by sorts': given a non-empty set~$S$, 
the underlying set of $\CMT^S$ is the union of disjoint copies
 of~$M^S$, for $\M \in \CM$, equipped with the disjoint union
topology obtained when each $M^S$ is given  the product topology.
The  structure given  by $R$,  $ G$, $H$ and $K$  is lifted pointwise to 
 substructures of such powers.
The superscript $^+$ indicates that the  empty structure is 
included in $\CX$.

We set up   hom-functors 
$\D \colon\CA \to \CX$ and $\E\colon \CX \to\CA$ 
using 
$\CM$
and its alter ego $\CMT$: 
\begin{alignat*}{2} 
                \D (\A)&= \CA(\A,\M_1) \du \cdots \du \CA(\A,\M_N), \qquad &
                \D (f)&  = - \circ f; \\
    \E (\X)& = \CX(\X,\CMT),  &
             \E (\phi)  &= - \circ \phi .  
\end{alignat*}
Here the disjoint union 
$\CA(\A,\M_1) \du \cdots \du \CA(\A,\M_N)$ is  a 
(necessarily closed) substructure of $M_1^A\du \cdots \du M_N^A$,
 and so a member of $\IScP(\CMT)$. 
 We recall from above that  $\CX(\X,\CMT)$, as a set, 
is the collection of continuous structure-preserving maps
 $\phi\colon \X \to\CMT$ which are such that 
$\phi(X_i)\subseteq M_i $ for $1 \leq i \leq N$.
This set acquires the structure of a member of 
$\CA$ by virtue of viewing it as a subalgebra of the power 
$\M_1^{X_1} \times  \dots \times \M_N^{X_N}$. 
The well-definedness of the functors $\D$ and $\E$ is of central 
importance to our enterprise. 
It hinges on the algebraicity assumptions we have made 
on $G, H, K, R$ 
and the requirement that each $M_i$ is 
finite and carries the discrete topology.
Moreover, 
 $\D$ and $\E$ set up  a  dual adjunction, $(\D,\E, e,\varepsilon)$ 
in which the unit and counit maps are evaluation maps,  
and these evaluations  are embeddings.

We say $\CMT$
 \defn{yields a multisorted duality} if, for each 
$\A \in \CA$,  the evaluation map $e_{\A}\colon
\A \to \E\D(\A)$
is an isomorphism.  
 The duality is \defn{full} 
if, for each $\X \in\CX$, the evaluation map 
$\varepsilon_{\X} \colon \X \to \D\E(\X)$ is an isomorphism,  and the categories~$\CA$ and~$\CX$ are dually equivalent. 
Thus a duality provides a concrete 
representation $\E\D(\A)$ of  $\A \in \CA$.
If in addition the duality is full,  every 
$\X \in \CX$ arises, up to isomorphism,  as a topological structure 
$\D(\A)$, for some $\A \in \CA$.  
We record  an important fact, true for  any multisorted duality, 
and adding weight to the duality's claim to be called `natural'.  
In $\CA = \ISP(\CM)$, the free algebra $\F_{\CA}(s)$  on~$s$ 
generators is isomorphic to $\E(\CMT^S)$
 \cite{CD98}*{Lemma~2.2.1 and Section~7.1}.

%hp1902  shortened 
%For many applications we require a duality which is full.  
In practice, fullness of a duality  is normally obtained at 
second hand by showing that the duality is  \defn{strong}.  
In certain applications---and this was crucial 
 in the TSM method for testing admissibility \cites{CFMP, CPsug}---consequences of strongness are required,
whereby
each of the functors setting up 
a strong duality 
converts an embedding to a surjection and a surjection to an embedding. 
Strongness of a single-sorted duality can be defined in several equivalent ways and the same is to be expected of  a multisorted
duality.   
In accordance with our policy of a black-box treatment of duality theory,  we shall suppress the details as far as possible.
%  In addition, 
%\cite{DT03}*{Theorem~4.1} shows 
% that a multisorted strong duality is full.  
%

The multisorted piggyback duality theorem originated in \cite{DP87}*{Theorem~2.2}    and thereafter  various specialisations
have been well documented, as for example in  
\cite{CD98}*{Theorem~7.2.1},
Most previous applications in the literature piggyback over 
$\cat{D}$, bounded distributive lattices with $0,1$ included in the language.  However,
as in \cite{CPsug}, we shall, except briefly in the appendix,  
piggyback over $\cat{D}_u$, the variety of all distributive lattices.
The upgrade to a
strong duality was not discussed in \cite{DP87}.  For this
 see for example
 \cite{CD98}*{Theorem~7.1.2} (the Multisorted 
NU Strong Duality 
Theorem and \cite{DT03}*{Theorem~2.1}.

We now state the multisorted duality theorem that  we shall apply to quasivarieties and varieties of Sugihara algebras and monoids.  
The reason for including the condition on subalgebras being 
subdirectly irreducible is that it ensures that partial operations
of arity $>1$ are not needed; cf.~\cite{CD98}*{Section~7.1}.  
A discussion of the role of the set~$K$ of one-element subalgebras of members of $\CM$, and the need, for the application of the Multisorted NU Strong Duality Theorem, to include these in $\CMT$,  can be found in
\cite{DT03}*{Lemma~4.5}.

\begin{thm}  
[{\rm Multisorted Piggyback Strong Duality Theorem, for distributive-lattice-based algebras}]\label{thm:multpigstrong}
Let $\CA = \ISP(\CM)$, where $\CM = \{ \M_1, \ldots,\M_N\}$ 
is a finite set of 
pairwise disjoint finite algebras in~$\CA$ and  assume  that there is a  forgetful functor $\fnt U \colon \CA \to  \cat{D}_u$  such 
that $\fnt U(\A)$ is a reduct of~$\A$ for each $\A \in \CA$.  Assume in addition  that for each~$\M \in \CM$ every non-trivial subalgebra 
of $\M$ is subdirectly irreducible. 

For each $\M\in \CM $  let $\Omega _{\M}$ be a {\rm(}possibly
empty{\rm)} subset of $\cat{D}_u(\fnt{U}(\M), \two )$, where~$\two$ denotes the $2$-element lattice in $\cat{D}_u$.

Let $\CMT$
 be the discretely topologised relational 
structure
with universe  $M_0:= M_1 \du \cdots \du M_N$ and equipped with
\begin{enumerate}  
\item[{\rm (i)}] $G$,   a subset of
$\bigcup \{\, \CA(\M,\M')\mid \M, \M'\in \CM \,\}$ 
satisfying the following separation condition: 
 for all $\M\in \CM$, given 
$a,b\in M$  with $a \ne b$, there exist
$\M' \in \CM$ and 
 $u\in \CA(\M , \M')\cap G$ 
 and 
$\w' \in \Omega _{\M'}$  such that 
$
\w' (u(a))  \ne \w'(u(b))  
$;
\item[{\rm (ii)}] 
  $R$, the collection of maximal $\CA$-subalgebras
 of sublattices of the form
  $$
(\w,\w')^{-1}(\leq) :=
\{\, (a,b) \in M\times  M' \mid \w(a) \leq \w' (b)\, \} , 
$$
for which  $\w\in \Omega _{\M}$, $\w' \in \Omega _{\M'}$
{\rm(}where  $\M, \M'$ range over $\CM${\rm)}. 
\end{enumerate}
Then $\CMT $ yields a duality on $\CA$.

 If the structure of the alter ego is augmented so as to include                                 
\begin{enumerate}
\item[{\rm (iii)}] $H$, the  set 
of all homomorphisms~$h$  whose domain is a subalgebra 
of some $\M$ and whose image is a subalgebra of some 
$\M$, where $\M, \M' \in \CM$;
\item[{\rm (iv)}]  $K$, the set of elements which
are universes of  one-element subalgebras in some member of 
$\CM$, viewed as nullary operations of~$\CMT$.
\end{enumerate}
then $\CMT$ yields a strong and hence full duality on $\CA$.  
\end{thm}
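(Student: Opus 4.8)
The plan is to obtain the theorem by assembling the two multisorted duality theorems recalled above: the Multisorted Piggyback Duality Theorem \cite{CD98}*{Theorem~7.2.1}, in the form appropriate to piggybacking over $\cat{D}_u$ (as in \cite{CPsug}), for the first assertion, and the Multisorted NU Strong Duality Theorem \cite{CD98}*{Theorem~7.1.2}, equivalently the strongness-completion result \cite{DT03}*{Theorem~2.1}, for the upgrade to strongness. The preliminary step is to record that the standing hypotheses of these theorems are in force: $\CA = \ISP(\CM)$ with $\CM$ finite and consisting of pairwise disjoint finite algebras; the forgetful functor $\fnt{U}\colon\CA\to\cat{D}_u$ exhibits every $\A\in\CA$ as distributive-lattice-based; and --- the hypothesis that does the real work in the strongness step --- every non-trivial subalgebra of every $\M\in\CM$ is subdirectly irreducible.

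For the plain duality, fix the subsets $\Omega_{\M}\subseteq\cat{D}_u(\fnt{U}(\M),\two)$. Conditions (i) and (ii) assert precisely that $G$ is a set of homomorphisms among members of $\CM$ with the point-separation property demanded by the piggyback construction relative to the family $\{\Omega_{\M}\}$, and that $R$ consists of the maximal $\CA$-subalgebras of the carrier sublattices $(\w,\w')^{-1}(\leq)\subseteq M\times M'$ with $\w\in\Omega_{\M}$, $\w'\in\Omega_{\M'}$. These are exactly the ingredients the Multisorted Piggyback Duality Theorem consumes, so that theorem yields at once that $\CMT = (M_0; G, R, \Tp)$ dualises $\CA$. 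The only extra point is that one is entitled to piggyback over $\cat{D}_u$ in place of $\cat{D}$; this is the same routine adjustment used in \cite{CPsug}, its sole effect being to change which quasi-homomorphisms of distributive lattices may serve as the $\w$'s.

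For strongness, note that a distributive-lattice reduct supplies a majority term, so each $\M\in\CM$ has a near-unanimity term of arity three. By the standard analysis underlying \cite{CD98}*{Section~7.1}, subdirect irreducibility of the non-trivial subalgebras of the $\M$'s guarantees that partial operations of arity greater than one are dispensable: it suffices to enrich the alter ego by the set $H$ of all unary partial homomorphisms between subalgebras of members of $\CM$ and by the set $K$ of one-element subalgebras, construed as nullary operations. Since enriching an alter ego with further algebraic structure preserves any duality it already yields, $\langle G,H,K,R\rangle$ still dualises; and the Multisorted NU Strong Duality Theorem --- applied through \cite{DT03}*{Theorem~2.1}, which takes a duality together with the NU and subdirect-irreducibility hypotheses and adjoins exactly $H$ and $K$ --- promotes this to a strong duality. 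Since every strong duality is full, $\CA$ and $\CX$ are dually equivalent.

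The step I expect to require the most care, beyond invoking the black boxes, is the reconciliation of the lean relation set $R$ produced by piggybacking (only the carriers attached to the chosen $\Omega_{\M}$'s) with the ample relation set (all subalgebras of all products $\M_i\times\M_j$) with which the NU strong duality theorem is customarily stated. The resolution, which is precisely what \cite{DT03}*{Theorem~2.1} packages for us, is that once a duality is in hand one need not enlarge $R$: the ternary near-unanimity term forces every subalgebra of a product $\M_i\times\M_j$ to be entailed by the relations already present together with the unary partial operations in $H$ and the nullaries in $K$, and it is this entailment that makes the $\langle G,H,K,R\rangle$-duality strong without touching $R$. Granting that, the residual checks --- that $G$ lies in the pool of partial homomorphisms, that the $\cat{D}_u$-piggyback hypotheses hold, and that subdirect irreducibility yields the arity bound --- are routine.
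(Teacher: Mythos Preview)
Your proposal is correct and follows essentially the same route as the paper: the paper does not supply a detailed proof but simply states that the theorem ``is derived by pasting together sufficient conditions for a reasonably economical duality (by multisorted piggybacking) and for an upgrade to a strong duality (Multisorted NU Strong Duality Theorem)'', citing \cite{CD98}*{Theorems~7.1.2 and~7.2.1} and \cite{DT03}*{Theorem~2.1}, with the subdirect-irreducibility hypothesis flagged as the device that eliminates higher-arity partial operations and \cite{DT03}*{Lemma~4.5} invoked for the role of~$K$. Your write-up is if anything more explicit than the paper's, particularly in articulating why the lean relation set~$R$ suffices once $H$ and $K$ are adjoined; that extra paragraph is a reasonable gloss on what \cite{DT03}*{Theorem~2.1} is doing, and no further justification is expected.
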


Some  comments are in order here.  
  Recall that 
Theorem~\ref{thm:multpigstrong} is derived by pasting together sufficient conditions for a reasonably economical duality
(by multisorted piggybacking)  and for an upgrade to a strong duality  (Multisorted  NU  Strong Duality Theorem).  
The theorem ensures that, when conditions (i)--(iv) are satisfied
  a strong duality is available.  But 
we would be likely to want 
to massage the alter ego so delivered in order to arrive at
 an  alter  ego which is easier to work with but which still strongly dualises~$
\CA$.
  A  result that facilitates the streamlining 
of an alter ego  in the single-sorted case is Clark and Davey's 
$\MT$-Shift Strong Duality Lemma,  of which we  presented a simplified for  in~\cite{CPsug}*{Lemma 3.4}.  We now present  the multisorted
version we shall need in this paper.

\begin{lem}[Multisorted $\CMT$-Shift  Strong Duality Lemma]   
\label{lem:multi-shift}
 Assume that  $\CA =\ISP(\CM)$ is a  finitely generated  quasivariety and that it is strongly dualised by   
$\CMT = (M_0; G,H,K,R,\Tp)$.
Then an alter ego $\CMT' = (M_0; G', H',K', R',\Tp)$ will also yield a strong duality if any of the following applies:
 \begin{newlist}
\item[{\rm (a)}]  $ R' \supset R$;
\item[{\rm (b)}]  $H'$ is obtained by deleting from  $H$ any element
 expressible  as a composition of the elements that remain;
\item[{\rm (c)}]    
  $\CMT'$ yields a duality on $\CA$ and is obtained from $\CMT$ by deleting 
members of $R\cup K$;  
\item[{\rm (d)}]  $\CMT'$ yields a duality on $\CA$ and is obtained by deleting from $H$ any element  which has a proper extension belonging to  $G\cup H$.
\end{newlist}
\end{lem}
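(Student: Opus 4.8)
The plan is to deduce each clause of Lemma~\ref{lem:multi-shift} from its single-sorted counterpart, namely the $\MT$-Shift Strong Duality Lemma of Clark and Davey (used in the simplified form of \cite{CPsug}*{Lemma~3.4}), transported to the multisorted setting. The key observation is structural: the multisorted theory of \cite{CD98}*{Chapter~7} and \cite{DT03} is built so that strongness, fullness and duality for $\CA=\ISP(\CM)$ with alter ego $\CMT$ on the disjoint union $M_0=M_1\du\cdots\du M_N$ behave formally just as in the single-sorted case, provided one keeps track of sorts. So the first step is to record precisely which general facts we need: (1) an alter ego $\CMT'$ on the same underlying multisorted set $M_0$ with a \emph{richer} structure than $\CMT$ still yields at least as strong a duality as $\CMT$ — monotonicity of dualising/strongly-dualising power under adding structure; (2) the characterisation of strongness via injectivity of $\CMT$ in $\CX=\IScP(\CMT)$ together with the fact that all partial/total operations needed can be taken of arity $1$ under our subdirect-irreducibility hypothesis (as already invoked for Theorem~\ref{thm:multpigstrong}); and (3) the syntactic-preservation bookkeeping that shows a relation or partial operation obtained by composing or extending existing ones is automatically preserved by every morphism, hence is ``entailed'' by the remaining structure.

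With those in hand, clause~(a) is immediate from monotonicity: enlarging $R$ to $R'\supset R$ can only add morphisms-preserve constraints that are already consequences, so $\CX'=\IScP(\CMT')$ and $\CX$ have the same objects and morphisms up to the added (entailed) relations, and strongness is preserved; here I would cite the multisorted entailment/${\le}$-ordering on alter egos from \cite{DT03}. Clause~(b): if $h\in H$ is a composite $h=h_k\circ\cdots\circ h_1$ of the retained partial homomorphisms, then any $\CMT'$-morphism automatically preserves $h$, so $h$ adds nothing; the only subtlety is the usual care with partial operations and their domains when $H\neq\emptyset$, flagged already in \cite{CPsug}*{Section~3} — one checks that the domain of the composite, being the appropriate pullback of the domains of the $h_i$, is itself preserved, so deleting $h$ changes neither $\CX$ as a category nor the injectivity of $\CMT'$. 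Clauses~(c) and~(d) are the ones where the hypothesis ``$\CMT'$ yields a duality on $\CA$'' is doing real work: here we are \emph{removing} structure, so we cannot argue by entailment alone. Instead the argument is: by hypothesis $\CMT'$ still dualises, so it only remains to see it still \emph{strongly} dualises; and for that we use the single-sorted-style criterion that, given a duality, strongness is equivalent to injectivity of the alter ego in its own dual category, which is detected sort by sort. Deleting members of $R\cup K$ (clause~(c)) or deleting a partial homomorphism that has a proper extension in $G\cup H$ (clause~(d)) does not disturb the relevant injectivity — in (c) because $K$ and $R$ contribute nothing to the injectivity test beyond what the remaining structure already forces once duality is known (this is exactly the role of $K$ discussed in \cite{DT03}*{Lemma~4.5}), and in (d) because the extension already present in $G\cup H$ subsumes the interpolation condition witnessed by the deleted map.

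The main obstacle I expect is \emph{not} any single clause but making the reduction uniform and honest: the single-sorted $\MT$-Shift Lemma is stated for $\CA=\ISP(\M)$ with one generator, whereas here $\CA=\ISP(\CM)$, so I must either (i) cite a multisorted version already in the literature (most plausibly extractable from \cite{DT03} together with \cite{CD98}*{Chapter~7}), or (ii) spell out that every step of the single-sorted proof — the IC ($\CMT$ injective), the hom-functor bookkeeping, and the entailment calculus on alter egos — goes through verbatim once ``subalgebra of $\M$'' is replaced by ``subalgebra of some $\M_i$'' and ``$\M\times\M$'' by ``$\M_i\times\M_j$'', with the disjoint-union topology ensuring everything stays sort-respecting. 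I would take route~(ii) but keep it terse: state the needed multisorted strongness criterion as a black box attributed to \cite{DT03}, then verify (a)--(d) each in two or three lines against it. The genuinely delicate bookkeeping — the one place a careless argument breaks — is the domain-tracking for partial homomorphisms in clauses~(b) and~(d) when $H\neq\emptyset$, so that is where I would spend the most care, pointing the reader to the parallel discussion in \cite{CPsug}*{Section~3} rather than re-deriving it.
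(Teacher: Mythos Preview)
Your overall strategy---transport the single-sorted $\MT$-Shift Lemma to the multisorted setting and check that each step respects sorts---is exactly what the paper does, and your division into ``entailment-style'' clauses (a), (b) versus ``duality-already-given'' clauses (c), (d) matches the paper's organisation.

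Where you and the paper diverge is in identifying the \emph{one} multisorted fact that needs genuine care. You locate the delicate point in domain-tracking for partial operations and in an injectivity criterion ``detected sort by sort''; the paper instead isolates a single technical ingredient: the strongness portion of the single-sorted argument in \cite{CDquest}*{Lemma~3.1} rests on the fact that a closed substructure of a non-zero power of the alter ego is hom-closed, and the multisorted version of precisely this statement is supplied by \cite{DT03}*{Lemma~4.3}. Once that lemma is in hand, (b)--(d) go through verbatim as in the single-sorted case, with (a) and the duality parts of (b) being straightforward evaluation-map bookkeeping (cf.\ \cite{CDquest}*{Lemmas~1.4 and~3.1}). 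Your citation of \cite{DT03}*{Lemma~4.5} for clause~(c) is misplaced: that lemma concerns why one-element subalgebras must be \emph{included} for the NU Strong Duality Theorem, not why they may be \emph{deleted} once duality is assumed. So your plan would work, but it is less sharp than the paper's about which single multisorted lemma does the heavy lifting, and one of your supporting citations points at the wrong result.
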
 

\begin{proof}  
As in the single-sorted case the proofs of the duality claims in (a) and (b) are 
 bookkeeping exercises with evaluation maps; 
cf.~\cite{CDquest}*{Lemmas~1.4 and~3.1}.  The part of the  proof of \cite{CDquest}*{Lemma~3.1} that deals with strongness
relies on the fact that a closed substructure of a non-zero power of $\CMT$ 
in the dual category is hom-closed.   The multisorted 
version of this assertion is stated and proved in 
\cite{DT03}*{Lemma~4.3}.  The proofs of (b)--(d) then proceed as in the single-sorted case.   
(Here (b) is a restricted form of hom-entailment, sufficient for our
purposes; 
see \cite{CD98}*{Section~9.4}.)
\end{proof}

The Shift Lemma, items (b) and (d),  promises to be valuable for 
eliminating unnecessary  maps from the set stipulated in
Theorem~\ref{thm:multpigstrong}(iv).   Moreover, in our applications, 
  the piggyback relations  in (ii)   will turn out to  be graphs of partial  endomorphisms of sorts or of partial homomorphisms
between sorts, or converses of such graphs.  
But, assuming 
we want our duality to be strong,  we do need to include partial 
operations in the alter ego rather than their graphs.  
%lc2301: I am not sure if to include this last sentence. We haven't talked about hom-entailment and I thought we weren't going to mention it. If the idea is to show that we are aware of it and its role in strong dualities, it is fine, but I would add one reference, perhaps to the book where hom-entailment is defined.
%hp2301  taken out, but I *had* mentioned hom-entailment in shift lemma proof.
% The point here is that the technical notion underpinning the 
%Shift Lemma is 
%hom-entailment.  

%We make some further comments on these matters in relation
%to our dualities at the end of 
%Section~\ref{sec:mon-multi}, once all the dualities are in place.
%%%%%%%%%%%%
%hp1702  new short bit, in lieu of having the chat at the end of Sec 5, 
%which we agreed to take out from there

There is no \textit{a priori} reason to expect that there will be a  close 
relationship between the
set-up envisaged in the first part of  Theorem~\ref{thm:multpigstrong}, in which we merely seek a  duality, and the additional structure which is added to ensure strongness.  
However we shall see that,  in the Sugihara context, 
the operations in~(iii) are tightly connected to the relations in~(ii), for the carrier maps we select.   
 Indeed, 
this is precisely what 
allows us to present streamlined alter egos, in which piggyback 
relations \textit{per se} do not feature.  In certain applications---the
description of finitely generated free algebras for Sugihara algebras and monoids being a prime example
(see \cite{CPfree})---we do not need strongness, or even fullness.  But knowing this does not lead to any worthwhile simplifications.  
%%%%%%%%   

%%%%%%%%%%%%%%%%%%%%%%%%%%%%%%%%%%%%%%%%%%%%%%
%%%%%%%%%%%%%%%%%%%%%%%%%%%%%%%%%%%%%%%%%%%%%%
\section{Sugihara  algebras and monoids,  odd and even cases}  \label{sec:prelim}

Given the freestanding treatment of the class $\SA$ of Sugihara 
algebras available in \cite{CPsug}
our aim as far as possible is to draw on the Sugihara algebra results to arrive at corresponding results for monoids, to avoid starting afresh.  Therefore
 we shall regard the class $\SM$ of Sugihara monoids as being obtained by enriching the language of $\SA$, instead of going in the opposite direction.

We recall that the variety $\SA$ of \defn{Sugihara algebras} is generated by the
algebra~$\Zed$ whose underlying lattice is the chain of integers.  The negation and implication are given, respectively,  by
\[
\neg a = -a  \quad \text{and} \quad 
a\to b = \begin{cases}
(-a) \vee b & \mbox{if }a\leq b,\\
(-a) \wedge b & \mbox{otherwise.}
\end{cases}
\]
%hp1802  rejigged because of horrid linebreak problems.
A finite Sugihara algebra is subdirectly irreducible 
if and only if it is (isomorphic to) some subalgebra $\Zed_k$  ($k \geq 2$).    This result  dates back to Blok and Dzobiak 
\cite{BD86};  see also  \cite{CPsug}*{Section~2}.  
Here the universe of    
$\Zed_{2n+1}$ is  $\{\, a \in \Zed \mid
  -n \leq a \leq n \,\}$ ($n \geq 0$) and $\Zed_{2n} = 
\Zed_{2n+1} \setminus \{0\}$ ($n \geq 1$).  
%%%%%%%%%%%%%%%%%%%

The variety 
$\SM$ of  \defn{Sugihara monoids} consists of  algebras 
$\A = (A; \land, \lor, \to, \neg, \mathbf t)$ such that the
reduct
$(A; \land, \lor, \to, \neg)$ belongs  to $\SA$ and~$\mathbf t$ is a constant which  is 
an identity for 
the derived fusion operation given by  $a \cdot b =
\neg (a \to \neg b)$.   It is useful to denote the derived constant $\neg \mathbf t$ 
by $\mathbf f$.
When~$k$ is odd. the Sugihara monoid~$\W_k$  
associated with the Sugihara algebra $\Zed_k$ has  
 the constants $\mathbf t$ and~$\mathbf  f$  interpreted 
as~$0$.  When $k$ is even,  $\mathbf t$ is interpreted as $1$ and $ \mathbf f$ as $-1$.
It has long been known  (see \cite{Du70} and also \cite{AB75}*{pp.~422--423}) that the finite subdirectly 
irreducible  members of $\SM$ are the algebras~$\W_k$
for $k > 1$.  We let $\SM_k:= \ISP(\W_k)$.

We next assemble information about finitely generated 
quasivarieties and varieties in~$\SA$ and~$\SM$, drawing on some fundamental results from universal algebra.  
Suppose we have a variety $\CA:= \CV(\M)$,
 where~$\M$
is  some finite algebra~$\M$.
Let $\class{S}:=\si(\CA)$ be the set 
of subdirectly irreducible algebras in~$\CA$, up to isomorphism.  Then Birkhoff's Subdirect Product Theorem tells us that $\class{S}$ is finite and that  
$\HSP(\M) =\ISP(\class{S})$.
  We shall be dealing with  the situation in which 
$\CA $ is congruence-distributive. A corollary of 
  J\'onsson's Lemma 
asserts that  in such a variety, every subdirectly irreducible algebra 
is a homomorphic image of a subalgebra of~$\M$.
In addition,
the lattice of subvarieties of~$\CA$ is a finite
distributive lattice.  
It follows that
of subvarieties of~$\CA$
 is isomorphic to the lattice of down-sets 
of $\class{S}$,   where $\A \leq \B $ in $\class{S}$ if and only if $\A \in \mathbb{HS}(\B)$    (see for example \cite{BD79}).

Take $m \geq 2$.
We  note  that $\Zed_{2m-1}$ is a homomorphic image of~$\Zed_{2m}$ via the homomorphism $u$ given by 
\[
u(a) =  \begin{cases}
                   a-1  &\text{if  }  0 < a \leq m,\\
                    a+1  &\text{if  }  -m\leq a < 0. 
                \end{cases}
\]

Proposition~\ref{prop:varsetc} leads to the diagram in 
\cite{CPsug}*{Figure~1}  for the lattice of finitely generated
subquasivarieties of~$\SA$.  We omitted the justification there since we did not explicitly need the result.  In this paper we do need  it. 

  In \cite{CPsug} we considered the chains
of quasivarieties $\{\SA_{2n+1}\}_{n\geq 1}$
(odd case) and $\{\SA_{2n}\}_{n \geq 1}$ (even case) and presented dualities for $
\SA_{2n+1}$ and $\SA_{2n}$, for general~$n$.  In 
Proposition~\ref{prop:varsetc} we have changed labels from~$n$
to $m$.  Part (ii) explains why.  In order to provide a multisorted
duality for % the variety generated by
$\CVA{2n}$ we would expect to draw on ingredients from the 
dualities for $\Zed_{2n}$ and $\Zed_{2n-1}$ at the  same time
and it is convenient subsequently to label the odd case 
quasivarieties as $\{ \SA_{2m-1}\}_{m\geq 2}$ and the even case
quasivarieties as $
\{\SA_{2m} \}_{m\geq 2}$.  (The class $\SA_1$ is the trivial class and $\SA_2$ is term-equivalent to the variety of Boolean algebras, and we have nothing to say about either class.)

\begin{prop}[Sugihara algebras: quasivarieties and varieties]  \label{prop:varsetc}\

\begin{enumerate}

\item[{\rm (i)}] $
\ISP(\Zed_{2m-1}) 
= \HSP(\Zed_{2m-1})$, and so is a variety.

\item[{\rm (ii)}] 
$\ISP(\Zed_{2m}) \subsetneq  \HSP(\Zed_{2m})$ and 
 $\HSP(\Zed_{2m}) = \ISP (\Zed_{2m}, \Zed_{2m-1} )$.   

\item[{\rm (iii)}]  
$\HSP(\Zed_1) \subset \HSP(\Zed_2) \subset \HSP(\Zed_3)\subset \cdots \subset \SA$. 
\end{enumerate}
\end{prop}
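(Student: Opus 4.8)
The plan is to deduce all three parts from the classification of the finite subdirectly irreducible Sugihara algebras (the $\Zed_k$, $k\geq 2$) together with two facts recalled above: Birkhoff's theorem, which gives $\HSP(\M)=\ISP(\si(\HSP(\M)))$ for finite $\M$, and, since $\SA$ is congruence distributive, the corollary of J\'onsson's Lemma, which gives $\si(\HSP(\M))\subseteq\mathbb{HS}(\M)$. The only genuinely Sugihara-specific input I will need is a small embedding lemma: \emph{if $k$ is odd and $2\leq j\leq k$ then $\Zed_j$ is isomorphic to a subalgebra of $\Zed_k$}. When $j$ is odd this is witnessed by the inclusion of the symmetric interval $\{-\tfrac{j-1}{2},\dots,\tfrac{j-1}{2}\}$ into $\{-\tfrac{k-1}{2},\dots,\tfrac{k-1}{2}\}=\Zed_k$; when $j$ is even, by the same interval with $0$ removed. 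Closure under $\to$ in each case comes down to the routine observations that $(-a)\vee b$ and $(-a)\wedge b$ lie in the interval spanned by $a$ and $b$ and, in the even case, are nonzero whenever $a$ and $b$ are. I also record the complementary remark that $\Zed_{2m}$ has no fixed point of $\neg$ (its universe omits $0$), hence neither does any power $\Zed_{2m}^{I}$ nor any subalgebra of such a power, so no odd Sugihara algebra lies in $\ISP(\Zed_{2m})$.

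For (i): each member of $\si(\HSP(\Zed_{2m-1}))$ lies in $\mathbb{HS}(\Zed_{2m-1})$, hence has at most $2m-1$ elements, hence is isomorphic to some $\Zed_j$ with $j\leq 2m-1$, which by the embedding lemma already belongs to $\ISP(\Zed_{2m-1})$. Birkhoff's theorem then gives $\HSP(\Zed_{2m-1})=\ISP(\si(\HSP(\Zed_{2m-1})))\subseteq\ISP(\Zed_{2m-1})$; the opposite inclusion is automatic, so the quasivariety coincides with the variety.

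For (ii): the homomorphism $u$ shows $\Zed_{2m-1}\in\mathbb{H}(\Zed_{2m})$, so $\HSP(\Zed_{2m})$ contains both $\Zed_{2m}$ and $\Zed_{2m-1}$ and therefore $\ISP(\Zed_{2m},\Zed_{2m-1})$. Conversely, every member of $\si(\HSP(\Zed_{2m}))\subseteq\mathbb{HS}(\Zed_{2m})$ is some $\Zed_j$ with $j\leq 2m$, and such a $\Zed_j$ either equals $\Zed_{2m}$ or, since then $j\leq 2m-1$, embeds into $\Zed_{2m-1}$ by the embedding lemma; either way $\Zed_j\in\ISP(\Zed_{2m},\Zed_{2m-1})$, so $\HSP(\Zed_{2m})=\ISP(\si(\HSP(\Zed_{2m})))\subseteq\ISP(\Zed_{2m},\Zed_{2m-1})$, giving the stated equality. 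The strictness of $\ISP(\Zed_{2m})\subseteq\HSP(\Zed_{2m})$ is then immediate: $\Zed_{2m-1}$ is odd, so it lies outside $\ISP(\Zed_{2m})$ by the fixed-point remark, while it lies inside $\HSP(\Zed_{2m})$.

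For (iii): $\Zed_k\in\mathbb{HS}(\Zed_{k+1})$ for every $k\geq 1$ --- as a subalgebra when $k$ is even, and via $u$ when $k$ is odd --- so $\HSP(\Zed_k)\subseteq\HSP(\Zed_{k+1})$, and separately $\HSP(\Zed_1)$ is the trivial variety and hence properly below $\HSP(\Zed_2)$. Each inclusion is proper because $\Zed_{k+1}$ is subdirectly irreducible with strictly more elements than any member of $\mathbb{HS}(\Zed_k)$, so by J\'onsson it cannot belong to $\HSP(\Zed_k)$; and $\HSP(\Zed_k)\subsetneq\SA$ because $\Zed_{k+1}$, being a finite subalgebra of the generating algebra $\Zed$, belongs to $\SA$ yet not to $\HSP(\Zed_k)$. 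I expect the only point needing attention throughout to be the bookkeeping that keeps the cardinality estimates applied to \emph{subdirectly irreducible} members, so that the upper bounds on $\si(\HSP(\Zed_k))$ coming from J\'onsson match the lower bounds supplied by the embedding lemma and by $u$; there is no hard step, and in particular no congruence computations beyond the single quotient $\Zed_{2m}\twoheadrightarrow\Zed_{2m-1}$ that is handed to us.
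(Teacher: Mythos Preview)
Your proof is correct and follows the same overall J\'onsson--Birkhoff framework as the paper, but the way you identify the subdirectly irreducibles in $\HSP(\Zed_k)$ is genuinely different. The paper invokes the explicit description of the congruence lattice of $\Zed_r$ from \cite{CPsug}*{Proposition~2.4} to compute each quotient $\Zed_r/\theta$ as a specific $\Zed_{2(s-p)+1}$, and then checks that this lands in the right $\ISP$-class. You instead bound $\mathbb{HS}(\Zed_k)$ purely by cardinality, use the classification of finite subdirectly irreducibles to name the survivor as some $\Zed_j$ with $j\leq k$, and then close the loop with your embedding lemma (every $\Zed_j$ with $j\leq 2m-1$ sits inside $\Zed_{2m-1}$). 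This is more elementary: you never touch a non-trivial congruence beyond the single surjection $u$, and your fixed-point observation gives a clean one-line proof of strictness in (ii) where the paper simply asserts that $\Zed_{2m-1}\notin\ISP(\Zed_{2m})$. The trade-off is that the paper's route yields explicit structural information about quotients (which it reuses elsewhere), whereas yours is a pure existence argument; for the proposition as stated, your version is the tidier of the two.
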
 

\begin{proof}  
To apply  J\'onsson's Lemma to the variety $\CV(\Zed_k)$ we need to investigate
$\mathbb{HS}(\Zed_k)$.  
By \cite{CPsug}*{Proposition~2.1}, any given subalgebra of~$\Zed_k$ is isomorphic to some $\Zed_r$, where $r\leq k$ 
and $r$ is even if $k$ is even.   Now consider 
a homomorphism~$h$ with domain  $\Zed_r$.  Its image 
$\img h$ is isomorphic to  a quotient  $\Zed_r/\theta$, 
where~$\theta$ is a congruence on $\Zed_r$.    According to \cite{CPsug}*{Proposition~2.4}, there exists $p \leq \left[\frac{r+1}{2}\right]$ such that $a \, \theta \, b$ if and only if 
$a = b$ or $|a|, |b| \leq p$.  Thus $\Zed_r/\theta
\cong \Zed_{2(s-p)+1}$. 
We now separate the odd and even cases.

Assume $k =2m-1$.  In this  case 
$s \leq p$.   It follows that $\Zed_{2(s-p)+1}\in \ISP(\Zed_k)$.
This implies that every subdirectly irreducible algebra in
$\HSP(\Zed_{2m-1})$ belongs to $\ISP(\Zed_{2m-1})$ and hence
that $\HSP(\Zed_{2m-1}) \subseteq \ISP(\Zed_{2m-1})$.  
Hence  the two classes coincide.  We have proved (i).

Assume $k=2m$.   Any member of $\mathbb{HS}(\Zed_{2m})$ which is already in $\mathbb{S}(\Zed_{2m})$ certainly belongs to
$\ISP(\Zed_{2m}, \Zed_{2m-1})$.  Now suppose that we have a non-trivial congruence $\theta$ on some subalgebra $\Zed_r$, with $r$ even 
and $r \leq 2m$.  From above,  $\Zed_r/\theta \cong 
\Zed_{2(s-p)+1}$. Since $s =r/2 \leq m$ and $p \geq  1$, the quotient is isomorphic to a subalgebra of 
$\Zed_{2m-1}$ and so in $\ISP(\Zed_{2m}, \Zed_{2m-1})$. Clearly
$\ISP(\Zed_{2m})$ is not a variety for $m > 1$ since it does not 
contain $\img u = \Zed_{2m-1}$.
We deduce that  (i) holds.

Finally (iii) is now immediate from (i) and (ii).
\end{proof}

The following proposition parallels the content of 
Proposition~\ref{prop:varsetc} and can be proved in the same way.  Note that $\W_{2m}$ has $\W_{2m-1}$ as an $\SM$-morphic image:  the surjective $\SA$-morphism $u \colon \Zed_{2m} \to 
\Zed_{2m-1}$ defined above is an $\SM$-morphism.

\begin{prop}[Sugihara monoids: quasivarieties and varieties]  \label{prop:varsetcmon}\

\begin{enumerate}

\item[{\rm (i)}] $
%\SM_{2m-1}
 \ISP(\W_{2m-1})
= \HSP(\W_{2m-1})$, and so is a variety.

\item[{\rm (ii)}] 
$\ISP(\W_{2m}) \subsetneq  \HSP(\W_{2m})$ and
 $\HSP(\W_{2m}) = \ISP (\W_{2m}, \W_{2m-1} )$.   

\item[{\rm (iii)}]  The inclusion orderings of 
the subquasivarieties $\{ \ISP(\W_k)\}_{k\geq 1}$ of $\SM$
and 
 of the subvarieties 
$\{ \HSP(\W_k)\}_{k\geq 1}$ of $\SM$ are as shown in Figure~\upshape{\ref{fig:MonoidVars}}.
\end{enumerate}
\end{prop}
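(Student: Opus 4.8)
The plan is to mirror the proof of Proposition~\ref{prop:varsetc} verbatim, exploiting the fact that $\SM$ differs from $\SA$ only by the addition of the constant~$\mathbf t$ (and the derived constant~$\mathbf f$), and that this constant is interpreted canonically on each~$\W_k$ in terms of the $\SA$-structure. The key observation, already flagged in the paragraph preceding the statement, is that all the structural facts used in the $\SA$ argument---the classification of subalgebras of $\Zed_k$, the classification of congruences on $\Zed_r$, and the existence of the surjection $u\colon\Zed_{2m}\to\Zed_{2m-1}$---lift to the monoid setting because the relevant subalgebras, quotients, and homomorphisms all respect~$\mathbf t$. So first I would record the monoid analogue of \cite{CPsug}*{Proposition~2.1}: every subalgebra of $\W_k$ is isomorphic to some $\W_r$ with $r\leq k$ and $r$ even when $k$ is even. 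The point is that a subalgebra~$\B$ of~$\W_k$ must contain~$\mathbf t$, hence contains the fixed point of~$\neg$ when~$k$ is odd and contains the pair $\{1,-1\}$ when~$k$ is even; combined with the $\SA$-level classification this forces $\B\cong\W_r$ with the parity constraint.

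Next I would deal with homomorphic images. Given an $\SM$-homomorphism~$h$ with domain~$\W_r$, its reduct is an $\SA$-homomorphism on~$\Zed_r$, so by \cite{CPsug}*{Proposition~2.4} the image is isomorphic to $\Zed_r/\theta$ for a congruence~$\theta$ collapsing exactly the elements of absolute value $\leq p$ for some $p\leq\left[\frac{r+1}{2}\right]$; as an $\SA$-algebra this quotient is $\Zed_{2(s-p)+1}$ where $s=\left[\frac{r+1}{2}\right]$. The quotient is automatically odd (its $\neg$ has a fixed point, the image of~$0$ resp.\ of the collapsed block), so $\mathbf t$ must be interpreted as that fixed point, i.e.\ $\W_r/\theta\cong\W_{2(s-p)+1}$. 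With these two facts in hand, parts (i) and (ii) follow exactly as in Proposition~\ref{prop:varsetc}: in the odd case $k=2m-1$ one has $s\leq p$, so the quotient is a subalgebra of $\W_{2m-1}$ and $\HSP(\W_{2m-1})\subseteq\ISP(\W_{2m-1})$; in the even case $k=2m$ one has $s=m\geq p\geq 1$ (when $\theta$ is nontrivial), so the quotient $\W_{2(m-p)+1}$ embeds in $\W_{2m-1}$, giving $\HSP(\W_{2m})=\ISP(\W_{2m},\W_{2m-1})$, while properness of $\ISP(\W_{2m})\subsetneq\HSP(\W_{2m})$ holds because $\img u=\W_{2m-1}\notin\ISP(\W_{2m})$ (the surjection $u\colon\Zed_{2m}\to\Zed_{2m-1}$ is an $\SM$-morphism since it sends $1\mapsto 0$, matching the interpretation of~$\mathbf t$ on both sides).

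For part (iii), the strict inclusions $\HSP(\W_1)\subset\HSP(\W_2)\subset\HSP(\W_3)\subset\cdots$ follow immediately from (i) and (ii) as in Proposition~\ref{prop:varsetc}(iii), and the claim that these are all the finitely generated subvarieties and subquasivarieties---hence that their inclusion orderings are as drawn in Figure~\ref{fig:MonoidVars}---follows from the universal-algebra preliminaries recalled just before Proposition~\ref{prop:varsetc}: $\SM$ is congruence-distributive (it has a distributive lattice reduct), so by J\'onsson's Lemma every finitely generated subvariety is generated by its finitely many subdirectly irreducible members, these are among the $\W_k$, and the lattice of subvarieties is the lattice of down-sets of the poset $\{\W_k\}$ under the order $\W_i\leq\W_j\iff\W_i\in\mathbb{HS}(\W_j)$. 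Combining this with (i) and (ii) pins down both orderings exactly as in the algebra case, with $\ISP(\W_{2m})$ sitting strictly between $\HSP(\W_{2m-1})$ and $\HSP(\W_{2m})$. The main obstacle, such as it is, is purely bookkeeping: one must be careful to check at each step that the witnessing subalgebras and homomorphisms genuinely preserve~$\mathbf t$, so that the $\SA$-level isomorphisms upgrade to $\SM$-level ones; but since $\mathbf t$ is always forced to be the (unique) available distinguished element, no new phenomena arise and the proof is a faithful transcription of the one given for Proposition~\ref{prop:varsetc}.
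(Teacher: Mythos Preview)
Your treatment of (i) and (ii) is correct and matches the paper's approach: mirror the proof of Proposition~\ref{prop:varsetc}, checking that the relevant subalgebras, congruences, and the surjection~$u$ all respect~$\mathbf t$.

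Part (iii), however, contains a genuine error. You assert that $\HSP(\W_1)\subset\HSP(\W_2)\subset\HSP(\W_3)\subset\cdots$ ``as in Proposition~\ref{prop:varsetc}(iii)'', and later that $\ISP(\W_{2m})$ sits strictly between $\HSP(\W_{2m-1})$ and $\HSP(\W_{2m})$. Both claims are false, and Figure~\ref{fig:MonoidVars} does \emph{not} depict a chain. The identity $\mathbf t \approx \mathbf f$ holds in $\W_{2k+1}$ (since $\mathbf t = 0 = \neg 0 = \mathbf f$) and therefore throughout $\HSP(\W_{2k+1})$, but it fails in $\W_{2m}$ (where $\mathbf t = 1 \neq -1 = \mathbf f$). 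Hence $\W_{2m}\notin\HSP(\W_{2k+1})$ for any~$k$, and in particular $\HSP(\W_{2m})\not\subseteq\HSP(\W_{2m+1})$. Likewise $\W_{2m-1}\notin\ISP(\W_{2m})$, since every nontrivial member of $\ISP(\W_{2m})$ satisfies $\mathbf t \neq \mathbf f$. This is precisely the argument the paper gives, and it is exactly the ``new phenomenon'' you claimed does not arise: the constant~$\mathbf t$ genuinely separates the odd and even columns. What is true is that $\W_k\in\mathbb{S}(\W_{k+2})$ and $\W_{2m-1}\in\mathbb{H}(\W_{2m})$, giving the two parallel chains with the one-way cross-links $\CVM{2m-1}\subset\CVM{2m}$ shown in the figure; the reverse cross-links are blocked by the $\mathbf t \approx \mathbf f$ obstruction.
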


\begin{proof}  (i) and (ii) 
  can be proved in the same way as the corresponding claims
for Sugihara algebras (see also for example \cite{OR07}*{Section~3}). 
% We may also imitate the proof in 
%Proposition~\ref{prop:varsetc}  to deduce that the finitely generated
%subquasivarieties of~$\SM$ are ordered in exactly the same way as those of~$\SA$ are.  
It is trivial that $\W_{k}\in\ope{S}(\W_{k+2})$ for each $k$ and that $\W_{2m-1}\in\ope{H}(\W_{2m})$.
It 
 is not true that $\W_{2m}$ belongs to $\HSP(\W_{2k+1})$ (or to $\ISP(\W_{2k+1})$) for any $k$.  If it did, 
$\W_{2n}$ would have to be a homomorphic image of a subalgebra of $\W_{2k+1}$.  But this is  impossible since the constants~$\mathbf t$ and~$\mathbf f$ are distinct in $\W_{2m}$ but coincide 
in any member of $\HSP(\W_{2m+1})$. 
It is also not true that $\W_k$ belongs to any $\HSP(\W_\ell)$ when $\ell<k$.
\end{proof}

%
%%%%%%%%%%%%%%%%%%%%%%

\begin{figure}
\begin{center}
\small{ 
	\begin{tikzpicture}[scale=.7]
	%odd
		\node [label=right:{$\scriptstyle \CQM{7}$}] (S7) at (1,4) {};
		\node [label=right:{$\scriptstyle \CQM{5}$}] (S5) at (1,3) {};
		\node [label=right:{$\scriptstyle \CQM{3}$}] (S3) at (1,2) {};
		\node [label=below:{$\scriptstyle \CQM{1}$}] (S1) at (0.5,1.2) {};

   		\draw [shorten <=-1.414pt, shorten >=-1.414pt] (S7) -- (S5);
		\draw [shorten <=-1.414pt, shorten >=-1.414pt] (S5) -- (S3);
	
		\draw (S1) circle [radius=3pt];
		\draw (S3) circle [radius=3pt];
		\draw (S5) circle [radius=3pt];
		\draw (S7) circle [radius=3pt];
	%% even
		\node [label=left:{$\scriptstyle \CQM{6}$}] (S6) at (0,4) {};
		\node [label=left:{$\scriptstyle \CQM{4}$}] (S4) at (0,3) {};
		\node [label=left:{$\scriptstyle \CQM{2}$}] (S2) at (0,2) {};

   		\draw [shorten <=-1.414pt, shorten >=-1.414pt] (S2) -- (S4);
		\draw [shorten <=-1.414pt, shorten >=-1.414pt] (S6) -- (S4);

		\draw (S2) circle [radius=3pt];
		\draw (S4) circle [radius=3pt];
		\draw (S6) circle [radius=3pt];
		
%%cross connections
%		\draw [shorten <=-1.414pt, shorten >=-1.414pt] (S2) -- (S3);
		\draw [shorten <=-1.414pt, shorten >=-1.414pt] (S1) -- (S2);
		\draw [shorten <=-1.414pt, shorten >=-1.414pt] (S1) -- (S3);
%		\draw [shorten <=-1.414pt, shorten >=-1.414pt] (S4) -- (S5);
%		\draw [shorten <=-1.414pt, shorten >=-1.414pt] (S6) -- (S7);
%% Top part
		\node [label=above:{$\scriptstyle \SM$}] (S) at (0.5,5) {};
		\node  (S-) at (0,4.5) {};
		\node  (S+) at (1,4.5) {};
		\draw (S) circle [radius=3pt];
   		\draw [shorten <=-1.414pt, shorten >=-1.414pt,dotted] (S7) -- (S+);
  		\draw [shorten <=-1.414pt, shorten >=-1.414pt,dotted] (S-) -- (S);
  		\draw [shorten <=-1.414pt, shorten >=-1.414pt,dotted] (S+) -- (S);
 		\draw [shorten <=-1.414pt, shorten >=-1.414pt,dotted] (S-) -- (S6);
		
	\end{tikzpicture}
\hspace*{3cm}
\begin{tikzpicture}[scale=.7]
	%even
		\node [label=left:{$\scriptstyle \CVM{6}$}] (S6) at (0,4) {};
		\node [label=left:{$\scriptstyle \CVM{4}$}] (S4) at (0,3) {};
		\node [label=left:{$\scriptstyle \CVM{2}$}] (S2) at (0,2) {};

   		\draw [shorten <=-1.414pt, shorten >=-1.414pt] (S6) -- (S4);
		\draw [shorten <=-1.414pt, shorten >=-1.414pt] (S4) -- (S2);
	
		\draw (S2) circle [radius=3pt];
		\draw (S4) circle [radius=3pt];
		\draw (S6) circle [radius=3pt];
	%% odd
		\node [label=right:{$\scriptstyle \CVM{5}$}] (S5) at (1,3.5) {};
		\node [label=right:{$\scriptstyle \CVM{3}$}] (S3) at (1,2.5) {};
		\node [label=right:{$\scriptstyle \CVM{1}$}] (S1) at (1,1.5) {};

   		\draw [shorten <=-1.414pt, shorten >=-1.414pt] (S1) -- (S3);
		\draw [shorten <=-1.414pt, shorten >=-1.414pt] (S5) -- (S3);

		\draw (S1) circle [radius=3pt];
		\draw (S3) circle [radius=3pt];
		\draw (S5) circle [radius=3pt];
		
%%cross connections
		\draw [shorten <=-1.414pt, shorten >=-1.414pt] (S1) -- (S2);
		\draw [shorten <=-1.414pt, shorten >=-1.414pt] (S3) -- (S4);
		\draw [shorten <=-1.414pt, shorten >=-1.414pt] (S5) -- (S6);
%% Top part
		\node [label=left:{$\scriptstyle \SM$}] (S) at (0,5) {};
		\node  (S-) at (1,4.5) {};
		\draw (S) circle [radius=3pt];
   		\draw [shorten <=-1.414pt, shorten >=-1.414pt,dotted] (S6) -- (S);
  		\draw [shorten <=-1.414pt, shorten >=-1.414pt,dotted] (S-) -- (S);
 		\draw [shorten <=-1.414pt, shorten >=-1.414pt,dotted] (S-) -- (S5);
		
	\end{tikzpicture}
}
\end{center}
\caption{Sugihara monoids: quasivarieties and varieties \label{fig:MonoidVars}}
\end{figure}

The cornerstone for our development of amenable 
single-sorted 
dualities for the quasivarieties $\SA_k$, for
$k$ both odd and even, was our analysis of   the partial endomorphisms of the algebras $\Zed_k$. The results in
\cite{CPsug}*{Section~2} are equally important in the multisorted
case and we shall want their analogues for Sugihara monoids too. 

 We  recall for reference the definitions of the maps from 
$\Zed_{2m-1}$ to itself that
\cite{CPsug}*{Proposition~2.9} shows constitute a generating set for
$\PEZ{2m-1}$:
\allowdisplaybreaks
\begin{alignat*}{2}
\shortintertext{partial 	 endomorphisms:} 
& f_0 
\colon \Zed_{2m-1} \setminus \{ 0\}  \to \Zed_{2m-1} , 
%\\    
\quad &&    f_0 
(a) = a \mbox{ for  $a\ne 0 $;}
\\[1ex]
& f_1  
\colon \Zed_{2m-1}\setminus \{1,-1\} \to \Zed_{2m-1},
\qquad   &&   f_1 
(a) = a \mbox{ for $a \ne \pm 1$};\\[1.5ex]
 & \text{and for } 1< i < m,\\[-.5ex]
&f_i
\colon \Zed_{2m-1} \setminus\{i,-i\} \to \Zed_{2m-1} , 
%\\&
\qquad &&f_i(a) = 
\begin{cases}
i& \mbox{if } a = i-1,\\
-i& \mbox{if } a = -(i-1),\\
a & \mbox{otherwise};
\end{cases} 
\\[1.5ex]
\shortintertext{endomorphism:}
& g\colon \Zed_{2m-1} \to \Zed_{2m-1} , 
\qquad && g(a) = \begin{cases}
a-1 & \mbox{if } a>0, \\
a+1& \mbox{if } a<0,\\
0& \mbox{if } x=0.
\end{cases}
\end{alignat*}
%%%%%%%%%

We remark that, of course, any morphism between Sugihara monoids necessarily includes the constants~$\mathbf t$ and 
$\mathbf f$  in its domain and image.  This ensures 
that the family of partial endomorphisms of a Sugihara monoid forms a monoid under composition.  (Cf.~the comments 
in \cite{CPsug}*{} for the algebra case.)

We shall adopt the convention that the same symbol is used for  two morphisms, one of Sugihara algebras and the other of 
Sugihara monoids, when they have the same  underlying set map. 
However we shall distinguish sets of (partial) endomorphisms
and, later, (partial)  homomorphisms of Sugihara monoids from their Sugihara algebra
counterparts by including a superscript $^{\mathbf t}$.  This usage 
accords with that used for R-mingle logics.  For partial endomorphisms of Sugihara monoids we have the following analogue of \cite{CPsug}*{Propositions~2.8 and~2.9}.

%%%%%%%%%%%%%%%%%%%%%%%  

\begin{prop}[partial endomorphism of Sugihara monoids, odd case]\label{lem:monpe:odd} \

\begin{enumerate}
\item[{\rm (i)}]
The endomorphisms of $\W_{2m-1}$ are the same as those of 
$\Zed_{2m-1}$.
\item[{\rm (ii)}]
 $\PEW{2m-1}$ is generated by 
$\{f_1, \ldots, f_{m-1},g\}$.
\end{enumerate}
\end{prop}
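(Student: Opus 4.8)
The plan is to reduce everything to the Sugihara algebra results \cite{CPsug}*{Propositions~2.8 and~2.9} by observing that adjoining the constant $\mathbf t$, interpreted as $0$ in the odd case, imposes essentially no new constraint. First I would prove (i). A (total) endomorphism of $\W_{2m-1}$ is in particular an $\SA$-endomorphism of $\Zed_{2m-1}$, so by \cite{CPsug}*{Proposition~2.8} it must be one of the maps listed there; conversely an $\SA$-endomorphism $h$ of $\Zed_{2m-1}$ is an $\SM$-endomorphism precisely when $h(\mathbf t^{\W_{2m-1}}) = \mathbf t^{\W_{2m-1}}$, i.e.\ $h(0)=0$. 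But every endomorphism of $\Zed_{2m-1}$ fixes the involution fixpoint $0$, since $h(0)=h(\neg 0)=\neg h(0)$ forces $h(0)$ to be the unique fixpoint of $\neg$ in the image, and the only fixpoint in $\Zed_{2m-1}$ is $0$. (Here I use that $\Zed_{2m-1}$ is odd, so the fixpoint exists and is unique.) Hence the two endomorphism monoids coincide.

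Next I would prove (ii). By the convention introduced just before the proposition, the maps $f_1,\dots,f_{m-1},g$ are defined as set maps on $\Zed_{2m-1}$, and each fixes $0$ (inspect the definitions: $g(0)=0$, and each $f_i$ for $i\ge 1$ has $0$ in its domain and acts as the identity there); so each is genuinely a partial $\SM$-endomorphism of $\W_{2m-1}$, and likewise any composite of them is. Thus $\PEW{2m-1} \supseteq \langle f_1,\dots,f_{m-1},g\rangle$. For the reverse inclusion, take a partial $\SM$-endomorphism $h$ of $\W_{2m-1}$; forgetting $\mathbf t$, it is a partial $\SA$-endomorphism of $\Zed_{2m-1}$, so by \cite{CPsug}*{Proposition~2.9} it is a composite of $f_0,f_1,\dots,f_{m-1},g$. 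It therefore suffices to show that the extra generator $f_0$ — whose domain is $\Zed_{2m-1}\setminus\{0\}$ and which omits exactly the constant $\mathbf t$ — is \emph{not} needed: any partial $\SM$-endomorphism has $0$ in its domain, so in a minimal-length expression for $h$ as a composite, $f_0$ cannot be the first map applied, as that would delete $0$ from the domain. More carefully, one argues that $f_0$ can be absorbed: on its domain $f_0$ agrees with the identity, and the identity on $\Zed_{2m-1}$ equals $f_i$ for no $i$, but the point is that composing with $f_0$ only shrinks a domain by removing $0$; since the final composite $h$ must retain $0$, every occurrence of $f_0$ in a word for $h$ can be deleted without changing the resulting partial map (each deletion can only enlarge the domain, and the check that the values are unaffected is immediate because $f_0$ is a restriction of $\id$). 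Hence $h \in \langle f_1,\dots,f_{m-1},g\rangle$.

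The one genuinely delicate point — the main obstacle — is the last absorption step: verifying that deleting $f_0$'s from a word representing a partial $\SM$-endomorphism $h$ really yields the same partial map and not merely an extension of it. The subtlety is that an interior $f_0$ might be deleting a point $0$ that some \emph{later} $f_i$ or $g$ would in any case remove from the domain, or might have been reintroduced; one must track domains carefully through the word. I would handle this by induction on word length, peeling off generators from the left: if the leftmost letter is $f_0$, then because $\id\restriction(\Zed_{2m-1}\setminus\{0\})$ extends to $\id$ and $h$'s domain contains $0$, one shows the word with that letter removed computes an extension of $h$ that still lies in $\PEW{2m-1}$, hence by the $\SA$-generation result and minimality actually equals $h$; if the leftmost letter is some $f_i$ ($i\ge1$) or $g$, one pre-composes the inductive hypothesis applied to the remaining shorter word. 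This reduces the claim to the $\SA$ statement together with the single structural fact that every partial $\SM$-endomorphism contains the fixpoint $0$ in its domain, which is immediate from $\mathbf t$ being interpreted as $0$.
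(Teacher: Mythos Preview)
Your treatment of (i) is correct and essentially matches the paper.

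For (ii), the absorption step you flag as the main obstacle is not actually needed, and your sketch of how to handle it is where the argument goes astray. In your induction, the case ``leftmost letter is $f_0$'' is handled by claiming that the shortened word $h'$ lies in $\PEW{2m-1}$ and extends~$h$, hence equals~$h$ by minimality. But $h'\in\PEW{2m-1}$ would require $h'(0)=0$, whereas $h = f_0\circ h'$ with $0\in\dom h$ forces $h'(0)\in\dom f_0$, i.e.\ $h'(0)\neq 0$; and even granting $h'\supseteq h$, minimality of the word length gives no mechanism to rule out $h'\supsetneq h$. So as written this case does not go through.

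The paper's fix is simpler and avoids absorption altogether: take the \emph{rightmost} (innermost) occurrence of $f_0$ in a word $p\circ f_0\circ q$. Then $q$ is a composite of $f_1,\dots,f_{m-1},g$, each of which fixes $0$, so $q(0)=0$; hence $f_0(q(0))$ is undefined and $0\notin\dom(p\circ f_0\circ q)$. Thus any word using $f_0$ computes a map with $0$ outside its domain, so no word for an $h\in\PEW{2m-1}$ uses $f_0$ at all --- there is nothing to absorb. (Your induction can be repaired along the same lines: the ``leftmost letter $f_0$'' case is simply impossible, since $h'(0)$ would have to be both equal to $0$, because $h'\in\PEZ{2m-1}$ with $0$ in its domain, and not equal to $0$, because it lies in $\dom f_0$.)
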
  

\begin{proof} (i) 
Since $0$ is the unique $\neg$-fixed point in $\Zed_{2m-1}$, every 
endomorphism of $\Zed_{2m-1}$ is also an endomorphism of 
$\W_{2m-1}$.

For (ii) we first note that $0$ is in 
the domain of any element~$h$ of $\PEW{2m-1}$ and 
$h(0) = 0$ since $h$ preserves~$\neg$.   
The only member of our standard generating set for
$\PEZ{2m-1}$ whose domain omits~$0$ is~$f_0$.

We aim 
 to show that any finite composition of partial endomorphisms
of $\Zed_{2m-1}$ which includes one or more occurrences of~$f_0$ fails to include~$0$ in its domain and so cannot be a member of  $\PEW{2m-1}$.  
  Consider  $p \circ f_0 \circ q$, where 
$p \in \PEW{2m-1}$ abs 
$q$ is a composition not including $f_0$.  Then $q(0)$ is defined and $q(0) = 0$.  But 
$p(f_0( y)) $ is undefined when $y = q(0)$.
Hence $p \circ f_0 \circ q \notin \PEW{2m-1}$.  When $p$ and/or~$q$ is absent the 
argument is even simpler.

Conversely, every element of $\{ f_1, \ldots ,f_{m-1}, g\}$ preserves~$0$ and so any composition of maps from this set
belongs to $\PEW{2m-1}$.
\end{proof}

We now turn to the even case.  Here
\cite{CPsug}*{Proposition~2.8}  tells us that 
$\PEZ{2m}$ is generated 
by the following maps:
 $h_i\colon \Zed_{2m}\setminus\{i,-i\} \to \Zed_{2m}$ 
 is defined by 
\[
h_i(a) = 
\begin{cases}
i& \mbox{if } a = i-1,\\
-i& \mbox{if } a = -(i-1),\\
a & \mbox{otherwise}
\end{cases}
\]
and 
$j\colon \Zed_{2m}\setminus\{1,-1\} \to \Zed_{2m} $ by 
\[
j(a) = \begin{cases}
a-1 & \mbox{if } a>0, \\
a+1& \mbox{otherwise.}
\end{cases}
\]
We have used different symbols here for the maps from those used in \cite{CPsug} to avoid a conflict of notation when we work with
  the variety generated by $\Zed_{2m}$ and shall need to consider $\Zed_{2m}$ and $\Zed_{2m-1}$ at the same time.

Each partial endomorphism of $\W_{2m}$ must include $\pm 1$ in its domain and  must fix these points.  Assume $m\geq 3$. For each 
element $e \in \PEZ{2(m-1)}$  we define $\overline{e}$ as follows:
\[
\overline{e} (a)  = \begin{cases}
                                  a &\text{if } a = \pm 1,\\
                                   e(a-1) + 1  &\text{if $a > 1$ and $a-1 \in \dom e$},\\
                                   e(a+1) - 1  &\text{if $ a < -1$ and $a+1 \in \dom e$}.
                              \end{cases} 
\]

\begin{prop}[partial endomorphisms of Sugihara monoids, even case] \label{prop:monpeEven}\ 

\begin{enumerate}  
\item[{\rm (i)}]
The only endomorphism of $\W_{2m}$ is the identity map.
\item[{\rm (ii)}] 
Let  $e \in \PEZ{2(m-1)}$ and define $\overline{e} $ as above.
Then $\kappa \colon e \mapsto \overline{e}$ sets up a bijection between 
$\PEZ{2(m-1)}$ and $\PEW{2m}$.  Moreover, 
$\PEW{2m}$ is generated by
$\{ \overline{h_2}, \ldots, \overline{h_{m-1}}, \overline{j}\}$. 
\end{enumerate} 
\end{prop}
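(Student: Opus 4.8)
The plan is to prove the two parts separately, handling part~(i) quickly and then devoting most of the work to the bijection in part~(ii).

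\medskip

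\textbf{Part (i).} First I would observe that any endomorphism $e$ of $\W_{2m}$ is in particular an endomorphism of the Sugihara algebra $\Zed_{2m}$ which additionally fixes the constant $\mathbf t$ (interpreted as $1$) and hence also fixes $\mathbf f = \neg\mathbf t = -1$. By the analysis of $\End(\Zed_{2m})$ recalled from \cite{CPsug}*{Proposition~2.8} (endomorphisms are restrictions of the generators $h_i$ and $j$ to the whole of $\Zed_{2m}$, which forces them to be built from $j$ only, since each $h_i$ is genuinely partial), the only total endomorphism candidates shift the positive part downwards; but such a shift does not fix~$1$. Concretely, any non-identity total endomorphism would have to move $1$, contradicting $e(\mathbf t)=\mathbf t$. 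Hence the only endomorphism is $\id$.

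\medskip

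\textbf{Part (ii), that $\kappa$ is well defined and injective.} I would first check that for $e \in \PEZ{2(m-1)}$ the map $\overline e$ genuinely is a partial endomorphism of $\W_{2m}$. Well-definedness of the underlying set map is immediate from the case split (the three cases are on disjoint parts of $\Zed_{2m}$, namely $\{\pm 1\}$, $\{a>1\}$, $\{a<-1\}$, and $\Zed_{2m}$ has no element $0$). The domain of $\overline e$ contains $\pm 1$, which are fixed, so $\mathbf t,\mathbf f$ are preserved. To see $\overline e$ preserves $\neg,\land,\lor,\to$: note that $\Zed_{2m}$ is obtained from $\Zed_{2(m-1)}$ by "pushing out" the interval, i.e.\ the bijection $a \mapsto a+1$ (for $a>0$), $a\mapsto a-1$ (for $a<0$), $1 \leftrightarrow $ "new" point, realises $\Zed_{2m}\setminus\{\pm 1\}$ as an order- and $\neg$-isomorphic copy of $\Zed_{2(m-1)}$, and on the two points $\pm 1$ the map $\overline e$ is the identity. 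One must verify compatibility at the "seam" — i.e.\ that applying $\overline e$ respects $\land,\lor,\to$ in mixed products where one argument is $\pm1$ and the other lies in $\dom e$ shifted. This is the analogue of the argument used in \cite{CPsug} to build $\PEZ{2m}$ and is a routine but not entirely trivial case check using the explicit formula for $\to$ in Section~\ref{sec:prelim}; I expect to lean on the fact that $\pm 1$ behave as the "innermost" non-fixed-point elements and that $e$ already preserves the relevant operations on $\Zed_{2(m-1)}$. Injectivity of $\kappa$ is clear since $\overline e$ determines $e$ via $e(b) = \overline e(b+1)-1$ for $b>0$ and $e(b)=\overline e(b-1)+1$ for $b<0$, the inverse of the defining formula.

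\medskip

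\textbf{Part (ii), surjectivity and the generating set.} For surjectivity I would take an arbitrary $h \in \PEW{2m}$. As noted in the run-up to the proposition, $h$ must contain $\pm 1$ in its domain and fix them (it preserves $\neg$, so it preserves the pair $\{1,-1\}$ and hence, being order-preserving on a chain, fixes each of $1$ and $-1$; one should also rule out $h$ collapsing $1$ and $-1$, which is impossible since $\mathbf t\ne\mathbf f$ in $\W_{2m}$). Then I would define $e$ on $\Zed_{2(m-1)}$ by $e(b) = h(b+1)-1$ for $b > 0$ with $b+1\in\dom h$, and symmetrically for $b<0$; one checks $\dom e$ is a subuniverse of $\Zed_{2(m-1)}$ and $e$ is a partial endomorphism there (reversing the seam argument above), and then $\overline e = h$ by construction. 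Finally, for the generating statement: $\kappa$ is a bijection, and I would argue it is a \emph{monoid} isomorphism from $(\PEZ{2(m-1)},\circ)$ onto $(\PEW{2m},\circ)$, i.e.\ $\overline{e_1}\circ\overline{e_2} = \overline{e_1\circ e_2}$ — this follows because on $\Zed_{2m}\setminus\{\pm1\}$ both sides act as the transported composite and on $\{\pm1\}$ both act as the identity (using that $\overline{e_2}$ fixes $\pm1$ so feeds $\pm1$ into the $a=\pm1$ branch of $\overline{e_1}$, while for $a\notin\{\pm1\}$ in $\dom(\overline{e_1\circ e_2})$ one has $a\mp1\in\dom(e_1\circ e_2)$ hence $\overline{e_2}(a)=e_2(a\mp1)\pm1$ lands in $\dom\overline{e_1}$). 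Since $\PEZ{2(m-1)}$ is generated by $\{h_2,\dots,h_{m-1},j\}$ by \cite{CPsug}*{Proposition~2.8} (relabelled — the indices start at $2$ because $h_1$ restricted past the seam corresponds to the identity-on-$\pm1$ data, or rather because $\overline{h_1}$ is not needed), applying the isomorphism $\kappa$ yields that $\PEW{2m}$ is generated by $\{\overline{h_2},\dots,\overline{h_{m-1}},\overline j\}$.

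\medskip

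The main obstacle I anticipate is the seam verification: confirming that $\overline e$ (and the inverse construction) really does preserve $\to$ — and not just $\land,\lor,\neg$ — at products mixing $\pm1$ with shifted elements, since the implication in a Sugihara algebra is defined by cases on the order and the "otherwise" branch uses $\land$ rather than $\lor$. Everything else is bookkeeping, and much of it parallels arguments already carried out in \cite{CPsug} for the algebra case, so I would cite those where possible rather than repeat them.
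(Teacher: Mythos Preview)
Your approach is correct and essentially the same as the paper's: both construct the inverse to $\kappa$ by restricting $h \in \PEW{2m}$ to $\dom h \setminus \{\pm 1\}$ and relabelling, then transfer the generating set from \cite{CPsug}*{Proposition~2.8} via the bijection (which, as you note, respects composition). The paper is terser --- it waves through the seam verification and instead isolates the one non-obvious ingredient for surjectivity, namely that every element of $\PEZ{2m}$ (hence of $\PEW{2m}$) is injective, which is precisely what you need to ensure your $e(b)=h(b+1)-1$ actually lands in $\Zed_{2(m-1)}$ rather than at~$0$; you should make that explicit when you write up the reverse direction.
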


\begin{proof}  Certainly $\overline{e} $ is a partial endomorphism of $\W_{2m}$ for each $e \in \PEZ{(2(m-1)}$.  
Also, every element of $\PEZ{2m}$ is injective, so the same must be true of elements of
$\PEW{2m}$.  It follows from this that $\kappa$ 
is surjective.
 The map $\lambda$ inverse to $\kappa $ acts by 
restricting $h\in \PEW{2m}$ to $\dom h  \setminus \{\pm 1\}$, relabelling domain and image in the obvious way to realise this as an element of $\PEZ{2(m-1)}$. 

The claim concerning a generating set for $\PEW{2m}$ follows from the corresponding result  for $\PEZ{2(m-1)}$ (see 
\cite{CPsug}*{Proposition~2.8}).
\end{proof}

%%%%%%%%%%%%%%%%%%%%%%%%%%%%%%%%%%%%%%%%%%%%%
%%%%%%%%%%%%%%%%%%%%%%%%%%%%%%%%%%%%%%%%%%%%%

\section{Multisorted dualities for 
Sugihara algebras}
\label{sec:alg-multi}

As promised in Section~\ref{sec:intro},
we shall present  multisorted dualities for the classes
 $\HSP (\Zed_{2m-1})=\ISP(\Zed_{2m-1})$  (the odd case)
and 
$\HSP(\Zed_{2m}) = \ISP(\Zed_{2m}, \Zed_{2m-1})$ (even case).
In the odd case our sole purpose here is to recast  our 
Piggyback Strong Duality Theorem \cite{CPsug}*{Theorem~4.3}
in two-sorted form, with a view to future applications. 
   The process we use to convert our earlier two-carrier duality for 
$\SA_{2m-1}$ could be used more generally to split a sort with more than one carrier map into separate sorts, with suitable adaptations being necessary to the alter ego.

%%%%%%%

 Our  first---and principal---task is to establish appropriate notation.  
 Fix $m \geq 2$.  
 We create two disjoint copies of   $\Zed_{2m-1}$ and call these $\P^-$ and $\P^+$.  
Let $\id_{-+}$ and $\id_{+-}$ denote,  respectively,  
 the 
 natural isomorphisms  from $\P^-$
to $\P^+$ and from $\P^+$ to $\P^-$. 
When working with $\P^-$ and $\P^+$ individually we shall
often think of each of them as equal to $\Zed_{2m-1}$.
When working with both sorts at the same time, we shall 
use superscripts to indicate  interpretations on $\P^-$ and $\P^+$.
For example,  $g^-$ and $g^+$
denote the interpretations 
 on $\P^+$ and $\P^-$ of the endomorphism $g$ of 
$\Zed_{2m-1}$.   
%%%%%%%%%%%%%%%% 

We  define  carrier maps from $\P^-$ and $\P^+$ into $\two$
as follows:
$\Omega_{\P^-} = \{ \delmv \}$ and 
$\Omega_{\P^+} = \{ \delpv\}$, where 
$$
\delpv(a) = 1 \Longleftrightarrow  a \geq 1 \text{\quad and \quad}
\delmv(a) = 1 \Longleftrightarrow  a \geq 0
$$
(cf.~\cite{CPsug}*{Section~4}).   
(Because we are working here with $\Zed_{2m-1}$ and shall later bring 
$\Zed_{2m+1}$ into the picture as well, we reserve the symbols
$\alpha^\pm$ as used with  $\Zed_{2n+1}$ in \cite{CPsug}*{Section~4} to use for carrier maps on $\Zed_{2m+1}$.)

To illustrate the use of multisorted structures we shall present in full
the check of the separation property needed in our application of
Theorem~\ref{thm:multpigstrong};
compare the following lemma with
the single-sorted version, \cite{CPsug}*{Lemma~4.1}.  

\begin{lem}[separation lemma for two-sorted duality for 
$\SA_{2m-1}$] \label{lem:2sortssep}
Let $\M \in \{ \P^+, \P^-\}$.  
Let $a, b \in \M$ with $a \ne b$.  Then there exists 
$
\M' \in \{ 
\P^+, \P^-\}  $ and  a homomorphism $\zeta \colon \M \to \M'$
such that $\w_{\M'} (\zeta(a)) \ne \w_{\M'} (\zeta(b))$.
Here $\zeta$ is either the identity map on one of the sorts or is 
a composite of maps drawn from $\{ g^-, \id_{-+}, \id_{+-}\}$.
\end{lem}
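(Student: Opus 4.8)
The plan is to separate points by working directly with the two carrier maps $\delpv$ and $\delmv$, exploiting the fact that each sort is just a copy of the chain $\Zed_{2m-1}$ on which the endomorphism $g$ acts by ``contracting towards $0$''. Concretely, given $\M \in \{\P^+,\P^-\}$ and distinct $a,b\in\M$, I want to produce a composite $\zeta$ of maps from $\{g^-,\id_{-+},\id_{+-}\}$, together possibly with an identity map, that lands in a sort $\M'$ whose carrier map $\w_{\M'}$ (namely $\delpv$ on $\P^+$ and $\delmv$ on $\P^-$) distinguishes $\zeta(a)$ from $\zeta(b)$. The key observation is that $\delmv$ detects the cut between $\{a<0\}$ and $\{a\geq 0\}$, while $\delpv$ detects the cut between $\{a\leq 0\}$ and $\{a\geq 1\}$, so between the two of them, after translating by the isomorphisms, we have access to a cut immediately above \emph{or} immediately below any chosen element; and $g$ lets us slide an arbitrary element of $\Zed_{2m-1}$ down to $0$ (from the positive side) or up to $0$ (from the negative side), so that an arbitrary pair of distinct elements can be pushed to a pair straddling one of these cuts.

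The steps, in order. First I would reduce to the case $\M = \P^-$ (identified with $\Zed_{2m-1}$): if $\M = \P^+$, precompose with $\id_{+-}$, which is a legitimate building block, so it suffices to separate $\id_{+-}(a)$ and $\id_{+-}(b)$ on $\P^-$. Second, given distinct $a,b\in\Zed_{2m-1}$, without loss of generality take $a<b$ in the chain order; set $c := \min\{\,|a|,|b|\,\}$ if $a,b$ have opposite signs, but more usefully observe that since $g$ moves each positive element one step towards $0$ and fixes $0$ (and symmetrically on the negatives), the element $g^k$ for suitable $k$ collapses either $a$ or $b$ to $0$ while keeping the other strictly of one sign. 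Third, case split on the relative position of $a$ and $b$: if there is a ``gap at $0$'' available, i.e.\ after applying an appropriate power of $g^-$ we have $\zeta(a) < 0 \leq \zeta(b)$ (or more precisely one of them is $0$ and the other is negative, resp.\ positive), then $\delmv$ separates them on $\P^-$; if instead we need the cut just above $0$, i.e.\ $\zeta(a) \leq 0 < \zeta(b)$ with $\zeta(b)\geq 1$, then we push forward along $\id_{-+}$ to $\P^+$ and use $\delpv$. One checks that by choosing the power of $g$ to collapse whichever of $a,b$ is closer to $0$ (in absolute value), the other survives with a definite sign, and then exactly one of the two carrier maps — after the appropriate $\id_{-+}$ or $\id_{+-}$ — does the job. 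Finally, when $a$ and $b$ lie on the same side of $0$, say $0 < a < b$, apply $g^{a}$ (i.e.\ $g^-$ composed with itself $a$ times, reading $a$ as the integer value) so that $a\mapsto 0$ and $b\mapsto b-a \geq 1$, then transport to $\P^+$ via $\id_{-+}$ and apply $\delpv$, which gives $0$ on the image of $a$ and $1$ on the image of $b$; the strictly-negative case is symmetric using $\delmv$ directly on $\P^-$.

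The main obstacle — really the only thing requiring care — is the bookkeeping of \emph{which} carrier map is attached to \emph{which} sort, and hence which of the two isomorphisms $\id_{-+}$, $\id_{+-}$ one must insert so that the separating evaluation is performed by $\w_{\M'}\in\Omega_{\M'}$ rather than by the ``wrong'' functional. In particular one must remember that $\delmv$ distinguishes $\{<0\}$ from $\{\geq 0\}$ whereas $\delpv$ distinguishes $\{\leq 0\}$ from $\{\geq 1\}$, so the two available cuts sit at different places and the reduction to a single canonical position (collapse one point to $0$) must be matched to the cut that actually exists on the target sort. Beyond that, verifying that each $\zeta$ produced is genuinely a composite of the allowed maps (noting $g^+$ is \emph{not} on the allowed list, which is why the reduction routes everything through $\P^-$ for the $g$-steps) is routine. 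I would present the argument as a short case analysis mirroring \cite{CPsug}*{Lemma~4.1}, remarking that the single-sorted proof there already contains the essential computation and that splitting the one sort into $\P^-$ and $\P^+$ only redistributes the two carrier maps across the two sorts while the maps $\id_{-+}$, $\id_{+-}$ encode the former identification.
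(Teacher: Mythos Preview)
Your proposal is correct and follows essentially the same route as the paper: reduce to $\M=\P^-$, assume $a<b$, and split into the four cases $a<0\leq b$, $a<b\leq 0$, $a\leq 0<b$, $0\leq a<b$, using a suitable power of $g^-$ to slide one endpoint to $0$ and then selecting $\delmv$ on $\P^-$ or $\delpv$ on $\P^+$ (via $\id_{-+}$) according to the sign of the other. The only cosmetic difference is that the paper, in the case $0\leq a<b$, writes $\zeta=(g^+)^a\circ\id_{-+}$ rather than your $\id_{-+}\circ(g^-)^a$; these coincide because $g^+\circ\id_{-+}=\id_{-+}\circ g^-$, and your observation that $g^+$ is not on the allowed list is exactly why one should prefer your formulation.
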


\begin{proof}  %By Lemma~\ref{lem:entail-multiodd}, 
We may without loss of generality 
assume that $\M = \P^-$.  

Suppose first that $a < 0\leq b$.  Let $\M' =\P^-$ and 
$\zeta=\id_{\P^-}$.  
Next suppose that $a < b \leq 0$.  Let $\M' = \P^-$ and 
$\zeta = (g^-)^{-b}$.  Then $\delmv(\zeta(a))  =0 \ne 1
= \delmv(\zeta(b))$.

 Now take $a 
\leq 0 < b$.  Let $\M' = \P^+$ and $\zeta= \id_{-+}$.  
Then $\delpv (\id_{-+} (a)) = 0 \neq 1 = \delpv(\id_{-+} (b))$.
Finally  suppose  $0\leq a < b$.  Again let $\M' = \P^+$ 
and let $\zeta = (g^+)^a\circ \id_{-+}$.  Then 
$\delpv( (\zeta (a))=0\ne 1=
\delmv ( \zeta (b))$.  
\end{proof}

%%%%%%%%%%%%%%%%%%%%%%%%%%%%%%%%%%%%%%%%%%%%

Our next task is to identify the piggyback subalgebras.
In \cite{CPsug}*{Proposition~4.2}, for the 
single-sorted case, we showed that
every piggyback relation (maximal or not) 
is the graph of a partial endomorphism of $\Zed_{2m-1}$ or the 
converse of such a graph.  Switching to  the two-sorted version, 
nothing changes except that we replace our piggyback subalgebras of 
$\Zed_{2m-1}^2$   by  the corresponding subalgebras
of $\M \times  \M'$, where $\M,\M' \in \{\P^-,\P^+\}$, with the appropriate carrier map acting on each coordinate. For that we need to set the following notation,
%The set of partial homomorphisms from $\M$ into~$\M'$ is denoted  by  $\PHOM(\M,\M')$, for $\M = \P^-$ and $\M'=
%\P^+$, and vice versa. 
given $\M, \M' \in \{ \P^-, \P^+\}$ we let $\PHOM(\M,\M')$
denote the set of $
\SA$-morphisms~$h $ with $\dom h \subseteq \M$ and $\img h \subseteq \M'$.

\begin{prop}[multisorted piggyback relations for $\SA_{2m-1}$]
\label{prop:mult4.2}\

\begin{enumerate}
\item[{\rm (i)}] 
A subalgebra of 
$(\delmv,\delmv)^{-1}(\leq)$ is the graph of some $h \in \End_{\text{\rm p}}(\P^-)$.

\item[{\rm (ii)}] 
A subalgebra of 
$(\delpv,\delpv)^{-1}(\leq)$ is the converse of the graph of some $k \in \End_{\text{\rm p}}(\P^+)$.

\item[{\rm (iii)}] 
A subalgebra of 
$(\delmv,\delpv)^{-1}(\leq)$ is the graph of some $h \in \PHOM(\P^-,\P^+)$
for which  $0 \notin \dom h$ and $0 \notin \img h$.

\item[{\rm (iv)}] 
A subalgebra of 
$(\delpv,\delmv)^{-1}(\leq)$ is  the graph of some 
$h \in \PHOM(\P^+,\P^-)$ or is the converse of the graph of some 
$k \in \PHOM(\P^-,\P^+)$.

\end{enumerate}
\end{prop}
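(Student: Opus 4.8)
The plan is to mirror, sort by sort, the single-sorted analysis of \cite{CPsug}*{Proposition~4.2}, tracking carefully how the two carrier maps $\delmv$ and $\delpv$ differ at the fixed point $0$. Recall that for a subalgebra $S$ of $(\w,\w')^{-1}(\leq)\subseteq \M\times\M'$, the projections $\pi_1(S)$ and $\pi_2(S)$ are subalgebras of $\M$ and $\M'$ respectively, hence (by \cite{CPsug}*{Proposition~2.1}) isomorphic copies of some $\Zed_r$. The key structural fact, as in the single-sorted case, is that on each such chain subalgebra the relevant carrier map is an \emph{order isomorphism onto its two-element image away from at most one point}, and more precisely that $(\w,\w')^{-1}(\leq)$ restricted to a subalgebra forces the relation $S$ to be (the graph of) a partial map in one direction or the other. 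So first I would record the reduction: any subalgebra $S$ of a piggyback sublattice, being contained in $(\w,\w')^{-1}(\leq)$, has the property that $(a,b),(a,b')\in S$ with $b\neq b'$ can only occur at places where $\w'$ is constant, i.e. at $b,b'$ both mapped to the same value; the same on the other side for $\w$. This is exactly the argument of \cite{CPsug}*{Proposition~4.2}, transported verbatim once the carrier maps are pinned down.

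Next I would treat the four cases. For (i) both coordinates carry $\delmv$, so the situation is literally the single-sorted one from \cite{CPsug}, and $S$ is the graph of a partial endomorphism of $\P^-$. For (ii) both coordinates carry $\delpv$; applying (i)'s argument with the roles of the two coordinates swapped, or equivalently noting $(\delpv,\delpv)^{-1}(\geq)$ is of the same form, yields that $S$ is the converse of the graph of a partial endomorphism of $\P^+$. For (iii), the left coordinate carries $\delmv$ (which detects $a\geq 0$) and the right carries $\delpv$ (which detects $b\geq 1$); here the crucial point is that the fixed point $0$ satisfies $\delmv(0)=1$ but $\delpv(0)=0$, so $(0,0)\notin(\delmv,\delpv)^{-1}(\leq)$ in the sense that $\delmv(0)=1\not\leq 0=\delpv(0)$. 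Since any subalgebra containing $0$ in a coordinate must contain it as the $\neg$-fixed point, and since $0\in\dom h$ would force $h(0)=0$, membership of the pair $(0,0)$ is obstructed; I would argue that this rules out $0$ from both $\dom h$ and $\img h$, and that $S$ is then a graph $\PHOM(\P^-,\P^+)$ by the usual order-isomorphism-off-one-point argument applied to the shifted situation. For (iv), the left coordinate carries $\delpv$ and the right $\delmv$; now $\delpv(0)=0\leq 1=\delmv(0)$, so the pair $(0,0)$ \emph{is} allowed, and moreover rows can branch at the place where $\delpv$ or $\delmv$ is constant. This is the asymmetric case: $S$ can be a graph of a partial homomorphism $\P^+\to\P^-$ or the converse of a partial homomorphism $\P^-\to\P^+$, just as in the single-sorted mixed case.

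The main obstacle I anticipate is case (iii): making precise why $0$ cannot lie in $\dom h$ or $\img h$, and why despite the mismatch at $0$ the relation $S$ is still globally functional (a graph) rather than merely a relation. The argument is that wherever $\delmv$ (resp.\ $\delpv$) fails to be injective on a chain subalgebra $\Zed_r$ it is constant on exactly the block $\{a: |a|\leq p\}$ of a certain congruence, and the containment $S\subseteq(\delmv,\delpv)^{-1}(\leq)$ together with the chain structure pins $S$ down; but one has to check that the $0$-obstruction does not create a pair of incomparable chains glued together. I would handle this by the same case analysis on signs of coordinates used in the proof of Lemma~\ref{lem:2sortssep}, checking $a<0$, $a=0$, $a>0$ separately against the carrier values, and invoking that a subalgebra of $\P^-\times\P^+$ projecting onto chains must itself be a chain. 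The remaining cases (i), (ii), (iv) should then follow with only cosmetic changes from \cite{CPsug}*{Proposition~4.2}, the only real content being the bookkeeping of which carrier detects $\geq 0$ versus $\geq 1$.
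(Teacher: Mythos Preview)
Your proposal is correct and takes essentially the same approach as the paper. The paper gives no separate proof of this proposition; it simply remarks that ``nothing changes'' when one passes from \cite{CPsug}*{Proposition~4.2} to the two-sorted setting except that the piggyback subalgebras of $\Zed_{2m-1}^2$ are relabelled as subalgebras of $\M\times\M'$ with the appropriate carrier map on each coordinate, and your plan is precisely to carry out that translation case by case. Your anticipated ``obstacle'' in case~(iii) dissolves quickly: if $(0,b)\in S$ then closure under $\neg$ gives $(0,-b)\in S$, and the constraint $\delmv(0)=1\leq\delpv(\pm b)$ forces $b\geq 1$ and $-b\geq 1$, a contradiction (and dually for $0\notin\img h$); once $0$ is excluded from both projections, $\delmv$ and $\delpv$ agree on what remains and you are back in the single-sorted situation.
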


Proposition~\ref{Prop:entail-multiodd} will
 allow us to avoid `doubling up' of  mirror-image structure, %at the cost of including 
since we have included 
the linking isomorphisms between the sorts.  This idea will enable us 
in due course to 
streamline our multisorted alter ego for $\Zed_{2m-1}$.
We  opt to let the sort $\P^-$ bear the brunt of  structuring  the alter ego.   But there is complete symmetry between the sorts and  we could equally well
have prioritised $\P^+$.

\begin{prop}[entailment of partial endomorphisms and  homomorphisms, odd case]
\label{Prop:entail-multiodd}
Any map in 
\[
\text{\rm (i)} \ \End_{\text{\rm p}}(\P^-),
\quad 
\text{\rm (ii)}  \ \End_{\text{\rm p}} (\P^+), 
\quad 
\text{\rm (iii)} \ 
\PHOM (\P^-, \P^+), 
\quad
  \text{\rm (iv)} \ \PHOM(\P^+,\P^-)
\]
is obtained by composition by a generating set for $\PEZ{2m-1}$,
whose members are interpreted on $\P^-$, together with 
$\{ \id_{-+}, \id_{+-}\}$.
\end{prop}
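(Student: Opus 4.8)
The plan is to reduce everything to the single-sorted case already established in \cite{CPsug}*{Propositions~2.8 and~2.9}, using the linking isomorphisms $\id_{-+}$ and $\id_{+-}$ to shuttle maps between sorts. The key observation is that each of $\P^-$ and $\P^+$ is, as an $\SA$-algebra, just a relabelled copy of $\Zed_{2m-1}$, and $\id_{-+}\circ\id_{+-} = \id_{\P^+}$, $\id_{+-}\circ\id_{-+} = \id_{\P^-}$. Hence a partial endomorphism $h$ of $\P^+$ corresponds to $\id_{-+}\circ(\lambda h)\circ\id_{+-}$, where $\lambda h \in \PEZ{2m-1}$ is the map $h$ read off on $\P^-$; similarly a partial homomorphism $\P^-\to\P^+$ is of the form $\id_{-+}\circ e$ for $e\in\PEZ{2m-1}$ interpreted on $\P^-$, and a partial homomorphism $\P^+\to\P^-$ is of the form $e\circ\id_{+-}$.

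First I would record the translation dictionary: fix once and for all a generating set $\Gamma := \{f_1,\dots,f_{m-1},g\}$ (or the full generating set $\{f_0,f_1,\dots,f_{m-1},g\}$ from \cite{CPsug}*{Proposition~2.9}; the statement says ``a generating set for $\PEZ{2m-1}$'') with all members interpreted on $\P^-$, and note that by \cite{CPsug}*{Proposition~2.9} every element of $\PEZ{2m-1}$, realised on $\P^-$, is a composite of members of $\Gamma$. This handles part~(i) directly, since $\End_{\text{\rm p}}(\P^-)$ is literally $\PEZ{2m-1}$ under our identification. For part~(ii), given $k\in\End_{\text{\rm p}}(\P^+)$, the map $\id_{+-}\circ k\circ\id_{-+}$ is a partial endomorphism of $\P^-$, hence a composite of members of $\Gamma$, say $w$; then $k = \id_{-+}\circ w\circ\id_{+-}$, which is a composite of members of $\Gamma\cup\{\id_{-+},\id_{+-}\}$ as required. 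Parts~(iii) and~(iv) are the same manoeuvre: for $h\in\PHOM(\P^-,\P^+)$, $\id_{+-}\circ h$ is a partial endomorphism of $\P^-$, so equals some composite $w$ of members of $\Gamma$, whence $h = \id_{-+}\circ w$; and for $h\in\PHOM(\P^+,\P^-)$, $h\circ\id_{-+}$ is a partial endomorphism of $\P^-$, equal to some composite $w$, whence $h = w\circ\id_{+-}$.

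The only point requiring a little care---and the one I would flag as the main obstacle, though it is minor---is the bookkeeping of domains: one must check that conjugating by $\id_{\pm\mp}$ sends the domain of a partial map on one sort exactly onto the domain of the corresponding partial map on the other, so that ``composite'' is being used in the genuine partial-algebra sense (composition of partial maps being defined with the largest domain on which it makes sense) and no spurious domain shrinkage or enlargement occurs. Since $\id_{-+}$ and $\id_{+-}$ are \emph{total} bijective isomorphisms, composing with them neither adds nor removes points from a domain, so this is immediate, but it is worth stating explicitly because elsewhere in the paper (cf.~Proposition~\ref{lem:monpe:odd}) domain-tracking under composition is exactly what makes or breaks an argument. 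With that checked, the proof is complete; I would write it as four short paragraphs, one per item, each a single line invoking \cite{CPsug}*{Proposition~2.9} after the appropriate conjugation.
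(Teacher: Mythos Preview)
Your proposal is correct and follows essentially the same approach as the paper: reduce each of (ii)--(iv) to an element of $\End_{\text{\rm p}}(\P^-)$ by conjugating or composing with the linking isomorphisms $\id_{-+}$, $\id_{+-}$, then invoke the known generating set for $\PEZ{2m-1}$ on $\P^-$. Your version is in fact more careful than the paper's terse proof (which contains apparent typos in the subscripts for cases (ii) and (iii)), and your remark about domain bookkeeping under composition with total bijections is a welcome explicit check that the paper leaves implicit.
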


\begin{proof}  
%We exploit the fact that composition of partial
%maps is an  admissible entailment construct in the context of
%multisorted duality theory.  
The claim in the proposition for 
a map of type  (i) is immediate.  A map of type
(ii), (iii) or (iv) is, respectively,  
expressible in the form   $\id_{+-} \circ e^-
 \circ  \id_{+-}$, 
 $\id_{+-} \circ e^-$ or 
$e^- \circ  \id_{+-}$, where $e^- \in \End_p(\P^-)$.
\end{proof}

We now present our promised two-sorted strong duality theorem for 
$\SA_{2m-1}$.  As anticipated, it has a clear relationship to its single-sorted, multi-carrier, counterpart.  In particular we incorporate into the alter ego (a copy of) the same generating set  for $\PEZ{2m-1}$ as 
we used in \cite{CPsug}*{Theorem~4.3};   see Section~\ref{sec:prelim}, 
recalling \cite{CPsug}*{Proposition~2.9}.

\begin{thm}[Two-sorted Piggyback Strong Duality Theorem
for $\SA_{2m-1}$]     \label{thm:OddMulti}
Fix~$m$ with $m\geq 2$.  Take disjoint sorts $\P^-$ and $\P^+$ each   isomorphic  to $\Zed_{2m-1}$.
  Let $\Omega_{\P^-} = \{ \delmv\}$ and $\Omega_{\P^+} = \{\delpv\}$.  Consider the two-sorted 
structure based on  $\CM := \{ \P^-, \P^+\} $ having universe
$M_0 : = P^- \du P^+$.  
Let 
\[
G = \{  g, \id_{-+},  \id_{+-}\},\qquad 
H= 
\{   f_0, f_1,f_2, \ldots, 
 f_{m-2} \}, \qquad 
K = \{\boldsymbol 0_{\P^-}\}.
\]
\upshape{(}Here $g, f_0,\ldots, f_{m-2}$ are to be interpreted on $\P^-$.\upshape{)} 
 Then $\CMT := (M_0; G, H, K, \Tp) $ yields a strong and hence full duality on $\SA_{2m-1}$.   
\end{thm}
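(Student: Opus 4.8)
The plan is to obtain Theorem~\ref{thm:OddMulti} as a direct application of the Multisorted Piggyback Strong Duality Theorem (Theorem~\ref{thm:multpigstrong}), followed by a streamlining of the alter ego supplied by that theorem using the Multisorted $\CMT$-Shift Strong Duality Lemma (Lemma~\ref{lem:multi-shift}) and the entailment facts in Proposition~\ref{Prop:entail-multiodd}. First I would verify the hypotheses of Theorem~\ref{thm:multpigstrong}: the class $\SA_{2m-1} = \ISP(\Zed_{2m-1}) = \ISP(\P^-,\P^+)$ (the two sorts being isomorphic copies) is of the required form, it has a forgetful functor onto $\cat{D}_u$ since every Sugihara algebra has a distributive lattice reduct, and every non-trivial subalgebra of $\Zed_{2m-1}$ is subdirectly irreducible (this is the Blok--Dziobiak result recalled in Section~\ref{sec:prelim}, since subalgebras of $\Zed_{2m-1}$ are the $\Zed_r$, which are subdirectly irreducible for $r\geq 2$). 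So the theorem applies with the chosen carrier maps $\Omega_{\P^-}=\{\delmv\}$, $\Omega_{\P^+}=\{\delpv\}$.

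Next I would check the separation condition (i) of Theorem~\ref{thm:multpigstrong}: this is exactly the content of Lemma~\ref{lem:2sortssep}, whose proof is already given, exhibiting the separating maps as composites of elements of $\{g^-,\id_{-+},\id_{+-}\}$ together with identity maps. Hence the set $G$ in the statement---or even just $\{g,\id_{-+},\id_{+-}\}$---suffices for separation, so the piggyback part of Theorem~\ref{thm:multpigstrong} already yields a \emph{duality} on $\SA_{2m-1}$ once we take $G$ as a sufficient separating set and $R$ as the full collection of maximal $\SA$-subalgebras of the sublattices $(\w,\w')^{-1}(\leq)$ for $\w\in\Omega_{\M}$, $\w'\in\Omega_{\M'}$. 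Then, to get strongness, Theorem~\ref{thm:multpigstrong}(iii)--(iv) instruct us to adjoin $H$, the set of all partial homomorphisms between subalgebras of the sorts, and $K$, the one-element subalgebras. In $\Zed_{2m-1}$ the only one-element subalgebra is $\{0\}$, which appears once in each sort; the key point, from Proposition~\ref{Prop:entail-multiodd}, is that $\boldsymbol 0_{\P^+}$ is expressible via $\id_{-+}$ from $\boldsymbol 0_{\P^-}$, so we may keep only $K=\{\boldsymbol 0_{\P^-}\}$ by Lemma~\ref{lem:multi-shift}(c).

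The remaining work is purely the streamlining, and this is where I expect the bulk of the (easy) bookkeeping to lie. By Proposition~\ref{prop:mult4.2} every piggyback relation is a graph or converse graph of a map in $\End_{\text{\rm p}}(\P^-)$, $\End_{\text{\rm p}}(\P^+)$, $\PHOM(\P^-,\P^+)$, or $\PHOM(\P^+,\P^-)$; by Proposition~\ref{Prop:entail-multiodd} every such map is a composite of a generating set for $\PEZ{2m-1}$ interpreted on $\P^-$, namely $\{f_1,\ldots,f_{m-1},g\}$ from \cite{CPsug}*{Proposition~2.9} (the list $f_0,f_1,\dots,f_{m-2}$ in the statement is the same generating set up to the reindexing/labelling conventions fixed in Section~\ref{sec:prelim}), together with $\id_{-+}$ and $\id_{+-}$. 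Hence after adjoining these maps to $G\cup H$ every relation in $R$ becomes hom-entailed, and Lemma~\ref{lem:multi-shift}(b),(c),(d) lets us delete all of $R$ (each relation having a proper extension, or being a composite, of the retained maps), delete redundant members of $K$, and delete any partial endomorphism with a proper total extension. What survives is precisely $G=\{g,\id_{-+},\id_{+-}\}$, $H=\{f_0,f_1,\dots,f_{m-2}\}$, $K=\{\boldsymbol 0_{\P^-}\}$, so $\CMT=(M_0;G,H,K,\Tp)$ strongly dualises $\SA_{2m-1}$. The main obstacle---if there is one---is just matching the reindexing of the $f_i$'s across the two papers' conventions and confirming that $f_0$ cannot be deleted (its domain omits $0$, so it has no total extension and is not a composite of the others), but none of this is deep; the theorem is essentially a transcription of \cite{CPsug}*{Theorem~4.3} into two-sorted language.
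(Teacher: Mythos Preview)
Your approach is correct and is essentially identical to the paper's own proof: apply Theorem~\ref{thm:multpigstrong}, verify the subdirect-irreducibility hypothesis, invoke Lemma~\ref{lem:2sortssep} for separation, and then use Propositions~\ref{prop:mult4.2} and~\ref{Prop:entail-multiodd} together with Lemma~\ref{lem:multi-shift} to pare the alter ego down to the stated $G$, $H$, $K$.

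One small inaccuracy to clean up: the generating set for $\PEZ{2m-1}$ recalled in Section~\ref{sec:prelim} is $\{f_0,f_1,\ldots,f_{m-1},g\}$, not $\{f_1,\ldots,f_{m-1},g\}$ (the latter is the generating set for $\PEW{2m-1}$ in Proposition~\ref{lem:monpe:odd}, i.e.\ the monoid case). So there is no ``reindexing'' to reconcile---$f_0$ genuinely belongs to the algebra generating set and is retained in~$H$ precisely because, as you note, its domain omits~$0$ and it is neither a composite of the remaining maps nor extendable. Your parenthetical invocation of Lemma~\ref{lem:multi-shift}(d) for deleting relations is also slightly misplaced: part~(d) concerns deleting elements of~$H$, not of~$R$; the deletion of the piggyback relations goes via part~(c), justified by the hom-entailment you correctly identify (each relation is the graph, or converse graph, of a composite of retained maps). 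These are cosmetic points; the skeleton of your argument matches the paper's.
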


 \begin{proof}  We proceed in the same way as in the proof of \cite{CPsug}*{Theorem~4.3}, but invoking Theorem~\ref{thm:multpigstrong}
instead of the single-sorted, multi-carrier Piggyback Strong Duality Theorem given in \cite{CPsug}*{Theorem~3.3}.  
  We know that every non-trivial subalgebra of each of
$\P^+$ and $\P^-$ is subdirectly irreducible.  
Lemma~\ref{lem:2sortssep} established the separation condition.
Finally we want to invoke the Shift Lemma as given in~\ref{lem:multi-shift} 
to confirm that 
our chosen, slimmed down,  alter ego strongly dualises 
$\SA_{2m-1}$.  Propositions~\ref{prop:mult4.2} and~\ref{Prop:entail-multiodd} provide all the facts we need.
               \end{proof}

%%%%%%%%%%%%%%%%%%%%%%%%%%%%%%%%%%%%%%%%%%%%%%%%
%%%%%%%%%%%%%%%%%%%%%%%%%%%%%%%%%%%%%%%%%%%%%%%%

% 

Our next goal is
%We next  provide
 a three-sorted duality for the variety 
$\CVA{2m} =\ISP(\Zed_{2m}, \Zed_{2m-1})$,  for 
$m \geq 2$.  
Formally our sorts will have disjoint(ified) universes.  
We 
include two copies,  $\P^-$ and $\P^+$, of $\Zed_{2m-1}$ and a single copy $\Q$ of $\Zed_{2m}$.   
We shall sometimes treat the sorts as though identified with 
the appropriate $\Zed_k$, being more explicit  when this 
is warranted.

In the odd case 
we introduced the maps $\id_{-+}$ and 
$\id_{+-}$ so as to avoid including in our two-sorted alter ego
sets of partial homomorphisms  $\PHOM (\M, \M')$
for all four choices of $\M, \M'$ from $
\{\P^-,\P^+\}$.  
Likewise, with the third sort $\Q$ now in play we  want to use linking maps between $\Q$ and $\P^\pm$ to avoid 
our alter ego needing to include partial homomorphisms
going between  $\Q$ and the other sorts.  
We already in Section~\ref{sec:prelim} made use of the surjective 
homomorphism $u \colon \Zed_{2m} \to \Zed_{2m-1}$. 
This gives rise to  surjective homomorphisms  $u^\pm \colon 
\Q \to \P^\pm$.  
When we restrict $u^-$ to $\Q \setminus
\{ \pm 1\}$ we obtain a bijective homomorphism mapping onto 
$\P^-  \setminus \{0\}$.   We denote its inverse by~$v^-$.
Corresponding claims hold for $u^+$.  
See Figure~\ref{fig:sortsmaps}.  
 
\begin{figure}[ht]
\begin{center}
\begin{tikzpicture}[scale=0.8, baseline= (a).base] %tikzpicture allows rescaling of whole diagram
\node[scale=1] (a) at (0,0){
\begin{tikzcd}[row sep = .3cm, column sep = .8cm]
\Zed_{2m-1} \cong \P^- \arrow[rrrrrr, yshift=2pt , "\id_{-+}"]
 & &  & &  &    & \P^+  \cong\Zed_{2m-1}
\arrow[llllll,  yshift=-2pt,  "\id_{+-}"]\\  
\P^- \setminus \{ 0\}  \arrow[rrrdddd, xshift=-2pt, swap, "v^-"]
 \arrow[u, hook, yshift=-1pt]  &  & &  & & &
 \P^+ \setminus \{ 0\} \arrow[llldddd, xshift=2pt, "v^+"] 
\arrow[u, hook', yshift=-1pt]\\
&&&&&&\\   %row3
& & &  & & &\\ %row4
%&&&\Q &&&\\
%&&&\Q &&&
&  & & \Q  \cong \Zed_{2m} \arrow[llluuuu, swap, "u^-"]  \arrow[rrruuuu, "u^+"] &&& \\
%&  & & \Q  \arrow[llluuuu]  \arrow[rrruuuu]
%\arrow[xshift=4pt, rrrdd] \arrow[xshift=-4pt, llldd] 
%& & &\\ %row5
& & & \Q \setminus \{ \pm 1\}   \arrow[u, hook]  & & & %row 6
\end{tikzcd}
};
\end{tikzpicture}
\end{center}
\caption{Linking maps between sorts, even case \label{fig:sortsmaps}}
\end{figure}

%%%%%%%%%%%%%%%%%%%%%%%%%%%%%%%%%%%%%%%%%%%%%

%
%%%%%%%%%%%%%%%%%%%%%%%%%%%%%%%%%%%%%%%%%%

%%

We now set up the carrier maps we shall need in order to satisfy 
the separation condition in the Piggyback Theorem.  
As for the odd case we take 
\begin{alignat*}{3}
&\text{from }\fnt{U}(\P^-)\to \two:  \qquad \qquad     
&& 
\delpv (a) = 1 \Longleftrightarrow  a\geq 1;\qquad\qquad \qquad \qquad &&\\
&\text{from }\fnt{U}(\P^+)\to \two:  \qquad 
&& 
\delmv (a) = 1 \Longleftrightarrow  a \geq   0; \\
\shortintertext{and, from the even case as analysed in 
\cite{CPsug},} 
&\text{from }  \fnt{U}(\Q)\to \two:
&&\beta 
(a) = 1 \Longleftrightarrow a > 0.
\end{alignat*}
We take 
$\Omega_{\P^-} = \{ \delmv\} $, $\Omega_{\P^-}= \{\delmv\}$,  and $\Omega_{\Q} = \{\beta\}$.  
For $\M \in\{\P^-,\P^+, \Q\}$,  we   write $\w_{\M}$ for 
the unique element of $\Omega_{\M}$.

\begin{lem}[separation lemma for $\CVA{2m}$]  \label{lem:3sortssep}
Define  $ g^-, \id_{-+}, \id_{+-}, u^-$ as above. 

Let $\M \in \{ \P^-, \P^+, \Q\}$.
and 
$a \ne b$ in $\M$.  Then there exists $\M' \in 
\{ \P^-, \P^+. \Q\} $ and $\zeta \in \HOM (\M, \M')$ such that
 $\w_{\M'}  (\zeta (a))  \ne \w_{\M'}  (\zeta(b))$, where $\zeta$ is an   identity map on one of the sorts or is a composite of maps drawn from $\{ g^-, \id_{-+}, \id_{+-}, u^-\}$. 
\end{lem}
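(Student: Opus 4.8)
The plan is to prove the separation condition by reducing, wherever possible, to the two-sorted odd case already handled in Lemma~\ref{lem:2sortssep}, using the surjective homomorphism $u^-\colon \Q \to \P^-$ to transport the problem off the new sort $\Q$. I would split into two cases according to whether $\M \in \{\P^-,\P^+\}$ or $\M = \Q$.

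First, suppose $\M \in \{\P^-,\P^+\}$. Then the required $\M'$, the morphism $\zeta$, and the verification $\w_{\M'}(\zeta(a)) \ne \w_{\M'}(\zeta(b))$ are literally those supplied by Lemma~\ref{lem:2sortssep}: the maps produced there are composites of $g^-$, $\id_{-+}$, $\id_{+-}$, which are among the maps allowed here. So this case is immediate.

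Second, suppose $\M = \Q$, with $a \ne b$ in $\Q \cong \Zed_{2m}$. I would distinguish whether $u^-(a) \ne u^-(b)$ or $u^-(a) = u^-(b)$. In the first subcase, apply $u^-\colon \Q \to \P^-$ to land in the odd sort $\P^-$ with $u^-(a) \ne u^-(b)$, and then invoke the $\M = \P^-$ instance of Lemma~\ref{lem:2sortssep} to find $\M' \in \{\P^-,\P^+\}$ and $\zeta'$ (a composite of $g^-,\id_{-+},\id_{+-}$) with $\w_{\M'}(\zeta'(u^-(a))) \ne \w_{\M'}(\zeta'(u^-(b)))$; then $\zeta := \zeta' \circ u^-$ works. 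The second subcase, $u^-(a) = u^-(b)$ with $a \ne b$, is the genuinely new point: since $u^-$ collapses exactly $\{1,-1\}$ to a single element of $\P^-$ (recall $u$ sends both $1$ and $-1$ to $0$, while being injective elsewhere), this forces $\{a,b\} = \{1,-1\}$. Here I would simply check directly that the carrier map $\beta$ on $\Q$ separates them: $\beta(1) = 1 \ne 0 = \beta(-1)$. So take $\M' = \Q$ and $\zeta = \id_\Q$, which is an identity map on one of the sorts, as permitted.

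The only real obstacle is bookkeeping: confirming that $u^-$ identifies precisely the pair $\{1,-1\}$ and nothing else, so that the residual subcase reduces to a single explicit pair which $\beta$ visibly separates. This follows from the definition of $u$ in Section~\ref{sec:prelim} together with the description of $v^-$ as the inverse of the restriction of $u^-$ to $\Q \setminus \{\pm 1\}$ (Figure~\ref{fig:sortsmaps}). Everything else is an appeal to the already-proved odd-case lemma.
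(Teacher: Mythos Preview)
Your proof is correct and follows essentially the same strategy as the paper: reduce the new sort $\Q$ to the odd case via $u^-$, and handle what $u^-$ fails to separate with $\beta$ and $\id_\Q$. The only difference is organisational: the paper splits the $\Q$ case by sign (opposite signs handled by $\beta$; both negative via $u^-$; both positive via $u^+ = \id_{-+}\circ u^-$), whereas you split by whether $u^-$ already separates, which collapses the residual case to the single pair $\{1,-1\}$. Your decomposition is marginally tidier but the content is the same.
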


\begin{proof}  
Lemma~\ref{lem:2sortssep} covers the cases in which $\M \in 
\{\P^-,\P^+\} $.
So  
let 
$c\ne d $ in $\Q$.   If $c$ and $d$ are of opposite sign then $\beta(c) \ne \beta (d)$ and we take $\M'=\Q$  and $\zeta =\id_{\Q}$. 
 Now take $c < d \leq - 1$ in~$\Q$.  Then
$u^- (c) <  u^-(d) \leq 0$ in  $\P^-$.   
Let $a := u^-(c)$ and $b := u^-(d)$ and proceed as in the proof of
Lemma~\ref{lem:2sortssep}. 
Similarly,  if $c > d \geq 1$ in $\Q$ then 
$u^+(c) > u^+(d) \geq 0$ in $\P^+$.  Again we can now argue   
as in the proof of Lemma~\ref{lem:2sortssep}.  
\end{proof}

%%%%%%%%%%%%%%%%%%%%%%%%%%%%%%%%%%%%%%%

We must now describe the multisorted piggyback relations for $\CVA{2m}$.
Given $\M, \M' \in 
\{ \P^-, \P^+, \Q\}$, we shall write
$
R_{\M,\M'}
$ for the set of universes $r$ of subalgebras $\mathbf r$
of 
$\M \times \M' $ such that  
$ r \subseteq
(\w_{\M}, \w_{\M'})^{-1}(\leq)\}$ and $\mathbf r$ is maximal 
with  respect  to this containment.  

We already know that every piggyback relation in the odd case 
(maximal or not) is 
the graph of a partial homomorphism or the converse of such a 
graph. In addition, every relation in $R_{\Q,\Q}$ 
is the graph of a member of $\End_p(\Q)$, by \cite{CPsug}*{Proposition~6.3}.   
We shall subsequently want to present a duality for the Sugihara 
monoid variety  $\CVM{2m}$ by making small
adaptations to that for $\CVA{2m}$.  
This means that it is expedient when we encounter graphs of $\SA$-morphisms
in the algebra setting we should be sufficiently explicit to be able  detect easily 
whether  $\SM$-morphisms are also available.  Furthermore 
our structural analysis of Sugihara algebras and monoids in succeeding 
papers requires detailed information on piggyback relations and it is convenient to record this now.

\begin{table}[ht]
 \begin{center}
\begin{tabular} {l|ccccc}
                      &  $ \phantom{n}\P^-$ \phantom{n}& & $\phantom{n}\Q 
\phantom{n}$   & & $\phantom{n}\P^+ $
\phantom{n}
 \bigstrut[b]  \\[1ex]
\hline
&&&& \\[-1.5ex] 
$\P^-  $  &  $ \grh $  &
\ \   & 
 $\grh$ 
 &
\ \     
&  $\grh $\bigstrut[t]  \\
&&&&& with $0 \notin \dom h, \, 0 \notin  \img h$ 
                  \\[2ex]
$\Q$  &  $\grh$   &  &
             $\grh  $ 
 & & $\grh$,
with $0 \notin \img h $\\[2ex] 
%hp2002  best layout for the P+ row?  and for P- row.   ?? use multirow??
$\P^+$  
& \quad $\grk^\smallsmile$  or  & & \!\!\! $\grk^\smallsmile$, with 
$0 \notin \dom k$ & &  $\grk^\smallsmile$
\\
& \quad $\grh $  
& &
  
 \end{tabular}
\end{center}
\medskip

\caption{Piggyback relations for $\CVA{2m}$ \label{table:CVApigrels}}
\end{table}

%%%%%%%%%%%%%%%%%%%%%%%%%%%%%%%%%%%%%%%%%%%

\begin{prop}[multisorted piggyback relations for $\CVA{2m}$] \label{prop:varalg-allrels}
 Let $m \geq 2$. 
 For each choice of $\M,\M' \in 
\{ \P^-, \P^+, \Q\}$,  the entry 
in the $\M,\M'$-cell in
Table~{\rm{\ref{table:CVApigrels}}}  shows the form that a member 
$S $ of  $R_{\M,\M'}$ must take.  If $\grh $ appears in that cell  it is to be assumed that $h$ 
 is a partial homomorphism with
$\dom h \subseteq  \M$ and $\img h \subseteq \M'$, with 
additional properties  as stipulated. 
In cases in which $ \grk^\smallsmile$ appears then $k$ is to be taken to be a homomorphism with $\dom k \subseteq \M'$
and $      
\img k \subseteq \M$.   Necessary restrictions arising from $0 \notin \Q$ are left implicit.
  \end{prop}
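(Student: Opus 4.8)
The plan is to verify Proposition~\ref{prop:varalg-allrels} by a case analysis over the nine ordered pairs $(\M,\M')$ with $\M,\M'\in\{\P^-,\P^+,\Q\}$, in each case starting from a maximal subalgebra $\mathbf S$ of $\M\times\M'$ contained in $(\w_{\M},\w_{\M'})^{-1}(\leq)$ and identifying its universe as the graph (or converse graph) of an appropriate partial $\SA$-morphism. The three cases with $\M,\M'\in\{\P^-,\P^+\}$ are already settled by Proposition~\ref{prop:mult4.2}, since the sorts $\P^-,\P^+$ and the carrier maps $\delmv,\delpv$ are literally those of the two-sorted odd-case set-up; so these entries of Table~\ref{table:CVApigrels} are simply quoted. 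The case $(\Q,\Q)$ is quoted from \cite{CPsug}*{Proposition~6.3}. That leaves the four ``mixed'' cases $(\P^-,\Q)$, $(\P^+,\Q)$, $(\Q,\P^-)$, $(\Q,\P^+)$, which are the genuinely new content.

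For the mixed cases the key idea is to transport the problem along the linking homomorphisms $u^\pm\colon\Q\to\P^\pm$ and their partial inverses $v^\pm$ (Figure~\ref{fig:sortsmaps}), using the compatibility of the carrier maps. First I would record the elementary but crucial observations that $\beta = \delpv\circ u^+$ and $\beta = \delmv\circ u^-$ off the fibre over $\{\pm1\}$: more precisely, for $a\in\Q$ we have $\delmv(u^-(a))=1\iff u^-(a)\geq 0\iff a\geq -1$, whereas $\beta(a)=1\iff a>0$, so the two disagree exactly on $\{-1\}$; similarly $\delpv(u^+(a))$ and $\beta(a)$ disagree exactly on $\{1\}$. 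Consequently, given a maximal subalgebra $\mathbf S\subseteq\Q\times\P^-$ inside $(\beta,\delmv)^{-1}(\leq)$, I would push forward along $u^-$ on the first coordinate to land inside $(\delmv,\delmv)^{-1}(\leq)$ on $\P^-\times\P^-$ — where Proposition~\ref{prop:mult4.2}(i) applies — and then pull back, checking that the exceptional points $\pm1$ (which must lie in the domain of any homomorphism, and are fixed) cause no loss of maximality. The direction $(\P^-,\Q)$ is handled dually, composing with $v^-$ to move the $\Q$-coordinate up to $\P^-$; and $(\P^+,\Q)$, $(\Q,\P^+)$ are the $\delpv$-analogues. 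In each mixed case one reads off the stated side conditions ($0\notin\dom h$, $0\notin\img h$, etc.) from the fact that $0\in\P^\pm$ has no preimage in $\Q$ under $u^\pm$, i.e.\ from ``$0\notin\Q$'', together with the sign behaviour of $\delpv,\delmv,\beta$ at $0$ and $\pm1$.

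The routine engine underneath all of this is the same as in \cite{CPsug}*{Proposition~4.2}: a subalgebra of a product of two Sugihara chains that is contained in a set of the form $(\w,\w')^{-1}(\leq)$ is forced, by the interaction of $\leq$, $\neg$ and $\to$, to be (the converse of) the graph of a partial morphism, and maximality pins down its domain and codomain. I would cite that argument rather than reproduce it, invoking \cite{CPsug}*{Proposition~2.1} for the classification of subalgebras of the $\Zed_k$ and \cite{CPsug}*{Propositions~2.8 and~2.9} for which maps arise. The main obstacle is bookkeeping rather than conceptual: one must be careful that transporting along $u^\pm$ and $v^\pm$ sends \emph{maximal} subalgebras to \emph{maximal} ones and back, which requires checking that the one-point discrepancies of the carrier maps at $\pm1$ (resp.\ the absence of $0$ from $\Q$) do not let a transported relation be properly enlarged on the other side. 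I expect this maximality-preservation check, done uniformly for the four mixed cells, to be the real work; everything else is quotation or sign-chasing. Finally I would note in passing that recording the graphs explicitly (rather than merely ``some piggyback relation'') is exactly what will let us, in Section~\ref{sec:mon-multi}, detect which of these carrier maps are also $\SM$-morphisms.
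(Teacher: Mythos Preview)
Your plan for the five ``non-mixed'' cells is fine and matches the paper: the four corners come straight from Proposition~\ref{prop:mult4.2}, and $R_{\Q,\Q}$ is quoted from \cite{CPsug}*{Proposition~6.3}.

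For the four mixed cells, however, your transport-along-$u^\pm$ strategy has a real gap, and it is not the maximality-preservation issue you flag. The problem is one step earlier: the pushforward need not land in the target sublattice at all. Take $(\Q,\P^-)$. You propose to apply $u^-\times\id$ to $S\subseteq(\beta,\delmv)^{-1}(\leq)$ and claim the image lies in $(\delmv,\delmv)^{-1}(\leq)$. But your own computation shows $\delmv\circ u^-$ and $\beta$ disagree at $-1$: specifically $\delmv(u^-(-1))=\delmv(0)=1$ while $\beta(-1)=0$. So if $(-1,b)\in S$ with $b<0$ --- which is perfectly compatible with $S\subseteq(\beta,\delmv)^{-1}(\leq)$ and with $S$ being a subalgebra (e.g.\ the graph of the inclusion $\Zed_2\hookrightarrow\Zed_{2m-1}$ extends to a maximal such $S$) --- then $(0,b)$ lies in the pushforward but not in $(\delmv,\delmv)^{-1}(\leq)$. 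Hence you cannot invoke Proposition~\ref{prop:mult4.2}(i) on the image. The same obstruction, at $+1$, hits the $(\Q,\P^+)$ case; and for $(\P^\pm,\Q)$ the use of $v^\pm$ runs into the dual difficulty that $v^\pm$ is only partially defined and misses the elements $\pm1\in\Q$, so the pushforward may not recover all of~$S$.

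The paper avoids this entirely by going \emph{up} rather than \emph{down}: it embeds both $\Q\cong\Zed_{2m}$ and $\P^\pm\cong\Zed_{2m-1}$ as subalgebras of $\Zed_{2m+1}$, observes that $\beta,\delmv,\delpv$ are literally the restrictions of the odd-case carriers $\delm,\delp$ on $\Zed_{2m+1}$ (with $\beta$ equal to both on $\Zed_{2m}$ since $0\notin\Zed_{2m}$), and then reads each mixed $S$ directly as a subalgebra of $\Zed_{2m+1}^2$ contained in the appropriate $(\alpha,\alpha')^{-1}(\leq)$. Now \cite{CPsug}*{Proposition~4.2} for $\Zed_{2m+1}$ applies with no discrepancy to manage, and the stated domain/image restrictions fall out from the containments $\dom h\subseteq\Zed_{2m}$, $\img h\subseteq\Zed_{2m-1}$ (or vice versa). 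If you want to salvage your route you would essentially have to reprove the graph property for the mixed $S$ by hand before transporting --- at which point the transport is no longer doing any work.
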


\begin{proof}  The entries in the four corner cells 
in the table deal with the forms taken by the 
piggyback relations in the sets $R_{\M, \M'}$ 
for which  neither $\M$ nor $\M'$ is $\Q$; see Proposition~\ref{prop:mult4.2}.
The results originate in   \cite{CPsug}*{Proposition~4.2}, applied with $n  = m-1$.   The dichotomy in the entry in the bottom left corner  is explained in  item (iv) of that proposition. 
 To describe  the members of $R_{\Q,\Q}$ we call on  \cite{CPsug}*{Proposition~6.3}:
any (maximal) piggyback subalgebra is the graph of a (necessarily
invertible) partial endomorphism. 

We now deal with the four remaining  cases,
in which we need to consider sorts of different parities.

Throughout we work with $S \in R_{\M,\M'}$ 
initially viewed 
as a subalgebra of 
$\Zed_{2m+1}^2$,  with $\Q$ identified with $\Zed_{2m}$ and whichever of 
$\P^-$ or $\P^+$ is in play in a particular item identified with
$\Zed_{2m-1}$; in this setting we regard $\Zed_{2m}$ and 
$\Zed_{2m-1}$ as subalgebras of $\Zed_{2m+1}$.    
 We may  then regard 
$\w_{\P^-} =\delmv$ and $\w_{\P^+}= \delpv$, 
as restrictions to $\Zed_{2m-1}$ of the maps  $\delm$  and $\delp$ used as
carrier maps  for the duality for $\SA_{2m+1} $ as %given in \cite{CPsug}*{Section~4}:
$
\delp 
(a) = 1 \Leftrightarrow  a \geq   1$  and $
\delm 
 (a) = 1 \Leftrightarrow a \geq 0$.
Our strategy for classifying   piggyback relations linking  $\Q$ 
with the sorts $\P^-$ and $\P^+$ will  be first to 
realise them, up to isomorphism,   as subalgebras of~$\Zed_{2m+1}^2$  associated with  appropriate piggyback relations for the odd case.   We then wish to assert that the 
piggyback relations from which we started are graphs of partial
homomorphisms or converses thereof, where 
We  take account of any  constraints inherent in the various cases as regards domain and image of the partial maps which can arise. 
To justify our claims  two approaches  are available.
 We can  appeal directly 
 to \cite{CPsug}*{Proposition~4.2}  and then switch into the language of sorts
or alternatively   we can translate to  sorts first and then call on Proposition~\ref{prop:mult4.2}.  The approaches are equivalent but, either way, some obvious re-alignment of notation is required.

%%%%%%%%%%%%%%

\begin{widenewlist}
\item[$R_{\P^-,\Q}$:]  
  %%%%DOWN
\enspace  
Consider  a subalgebra  $S$ of $
%\P^- \times \Q$ 
\Zed_{2m-1} \times \Zed_{2m}$  
contained in
$ (\delmv, \beta)^{-1}(\leq)$.   
 Then 
$(a,b) \in S$ and $\delmv(a) = 1 $ together imply $\beta(b) = 1$.
This implies that $\delm(a) = 1$ forces $\delm(b) = 1$ since 
$b=0$ does not arise.  
%By \cite{CPsug}*{Proposition~4.2(i)}, 
Then 
 $S$ is the graph of a partial endomorphism~$e$ of $\Zed_{2m+1}$,  where $\dom e$  omits $\pm m$ and $\img  e $ omits $0$. Translating into the language of sorts, we 
obtain  the characterisation of $R_{\P^-,\Q}$ shown in the table.

\item[$R_{\Q,\P^-}$:]
\enspace 
Let $S$ be a subalgebra  of   $\Zed_{2m} \times\Zed_{2m-1}$
%$\Q \times \P^-$
 for which $S \subseteq (\beta, \delmv)^{-1}(\leq)$.
This time we may view $S$ as a subalgebra of 
$\Zed_{2m+1}^2$ within $(\delp, \delm)^{-1}(\leq)$;   necessarily, 
$S \cap \bigl (\{ 0\} \times \Zed_{2m} \bigr )= \emptyset$. 
We deduce %from \cite{CPsug}*{Proposition~4.2(iv)}
that 
 $S \subseteq (\delm, \delm)^{-1}(\leq)$.   As such, $S$  is the graph
of a member of $\PEZ{2m+1}$.  
Restating in terms of sorts, we obtain the required result.

\item[$R_{\Q,\P^+}$:]   %%%%DOWN
\enspace 
Let $S$ be a subalgebra of %$\Q \times \P^+$
$\Zed_{2m} \times \Zed_{2m-1}$ 
for which $S \subseteq (\beta, \delpv)^{-1}(\leq)$.
This time $(a,b) \in S$  implies $a \ne 0$ so that $S$,
regarded as a subalgebra of $\Zed_{2m+1}^2$, is contained in 
$(\delm, \delp)^{-1}(\leq)$.  
Any allowable subalgebra is the  graph  of a partial endomorphism $e$ of $\Zed_{2m+1}$ which excludes~$0$ from both domain and image.  In terms of sorts, this gives the graph of a partial homomorphism whose image excludes~$0$.

\item[$R_{\P^+,\Q}$:]  %%%%UP  DONE
\enspace  
Let  $S$ be  a subalgebra of %$\P^+ \times \Q$ 
$\Zed_{2m-1} \times \Zed_{2m}$ 
contained in $(\delpv, \beta)^{-1}(\leq)$.  
View $S$ as a subalgebra of 
$\Zed_{2m+1}^2$.  
Here $(a,b) \in S$ implies 
$a \ne \pm m$, 
$b \ne 0$
and $\delpv(a) =1$  forces $\beta (b) = 1$.  But for such $a, b$ 
this last condition is the same as $\delp(a) = 1 $ forces 
$\delp(b) = 1$. 
Hence $S^\smallsmile $ is the graph of a partial
endomorphism~$k$ of $\Zed_{2m+1}$,  by \cite{CPsug}*{Proposition~4.2(ii)}.  We require $0 \notin \dom h$
(this is of course necessary) and $\img h \in \Zed_{2m-1}$.  So
$k$ can be viewed as a partial homomorphism from $\Q$ into $\P^+$.  \qedhere
\end{widenewlist}
\end{proof}

A few additional remarks on the entries in Table~\ref{table:CVApigrels} are in order.  First of all there is more symmetry than may at first sight appear.  The 
elements of $R_{\Q,\Q}$ are graphs of partial endomorphisms
$h$ of $\Q$.  Any such~$h$ is invertible and $k:= h^{-1}$ is such that $\grk ^\smallsmile = \grh$.    
When we translate from $\Zed_{2m+1}^2$ into partial maps
between sorts, presence or absence of~$0$ from domain
or image is automatic.  

We shall derive an analogue   
of Proposition~\ref{Prop:entail-multiodd}, now also dealing with entailment of partial 
homomorphisms between non-isomorphic sorts.  
Refer back to the $\CM$-Shift Strong Duality Lemma~\ref{lem:multi-shift} to see the
relevance of the conditions.  Those familiar with the  requisite 
technicalities can rephrase  
the claims in Proposition~\ref{prop:varalg-entailphoms}  in terms of hom-entailment.

%%%%%%%%%%%%%%%%%%%%%%%

% 

\begin{prop}[partial homomorphisms between sorts] \label{prop:varalg-entailphoms}
\

\begin{enumerate}
\item[{\rm (i)}] Any  $h \in \PHOM (\P^-, \Q)$ is 
expressible as a composite of maps drawn from 
$\End_p(\Q) \cup \{ v^-\}$. 

Any  $h' \in \PHOM (\P^+, \Q)$
is  
expressible as a composition of maps drawn from
$\End_{\text{\rm p}}(\Q) \cup \{ v^-\} \cup 
\{ \id_{+-}\}$.

\item[{\rm (ii)}] 
Any  $h \in \PHOM (\Q,
\P^-)$ 
expressible as the restriction of a  composition of maps drawn from $\End_{\text{\rm p}}(\P^-) \cup
\{u^-\}$.

Any  $h' \in \PHOM (\Q, \P^+)$ is expressible as the restriction 
of a composition of maps drawn from 
$\End_{\text{\rm p}}(\P^-) \cup
\{u^-\}
\cup \{ \id_{-+}\}$.
\end{enumerate} 
\end{prop}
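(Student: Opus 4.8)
The plan is to exploit the fact, already recorded in Table~\ref{table:CVApigrels} and Proposition~\ref{prop:varalg-allrels}, that the partial homomorphisms between $\Q$ and the sorts $\P^\pm$ are very rigid: each arises, after identifying $\Q$ with $\Zed_{2m}$ and $\P^\pm$ with $\Zed_{2m-1}$, from a partial endomorphism of $\Zed_{2m+1}$, and hence can be factored through the fixed linking maps $u^-$, $v^-$ together with partial endomorphisms of a single sort and the isomorphisms $\id_{-+},\id_{+-}$. So the strategy mirrors the proof of Proposition~\ref{Prop:entail-multiodd}: reduce everything to partial endomorphisms of $\P^-$ (on which we already have the generating set from Proposition~\ref{prop:monpeEven}'s algebra analogue, i.e.\ \cite{CPsug}*{Proposition~2.9}) by absorbing the change-of-sort into the linking maps.

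For part~(i): given $h \in \PHOM(\P^-,\Q)$, we know from the $R_{\P^-,\Q}$ row of the table that $h$ is a partial homomorphism with $0 \notin \dom h$ and $0 \notin \img h$ (the domain constraint comes from the fact that $\Q$ omits the would-be images, the image constraint from $0\notin\Q$). Since $0\notin\dom h$, the composite $h \circ u^-$ restricted appropriately makes sense, but more directly: $v^- \colon \P^- \setminus\{0\} \to \Q\setminus\{\pm1\}$ is a bijective homomorphism, so $v^- \circ h^{\mathrm{pre}}$, where $h^{\mathrm{pre}}$ is $h$ viewed as a partial endomorphism of $\P^-$ with image in $\P^-\setminus\{0\}$ (legitimate because $0\notin\img h$), realises $h$; but $h^{\mathrm{pre}} \in \End_{\text{\rm p}}(\P^-)$, which by the bijection $\kappa$-type reasoning corresponds to a composite of generators. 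Actually the cleanest route: write $h = v^- \circ e$ where $e\in\End_{\text{\rm p}}(\P^-)$ has $\img e \subseteq \P^-\setminus\{0\}$ and $\dom e = \dom h$; then since $v^-$ maps into $\Q$ and $\End_p(\Q)$ acts on the $\Q$ side, and $e$ decomposes via \cite{CPsug}*{Proposition~2.9}'s generators on $\P^-$. Hmm — but the statement wants composites from $\End_p(\Q)\cup\{v^-\}$, so one should rather push $e$ across: $e = (u^-)|_{\cdots} \circ \tilde e \circ v^-$ for a suitable $\tilde e \in \End_p(\Q)$, giving $h = v^- \circ e = v^- \circ u^- \circ \tilde e \circ v^- = \tilde e \circ v^-$ (since $v^-\circ u^-$ is the identity on its domain). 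That is the key manipulation. For $h' \in \PHOM(\P^+,\Q)$ one prepends $\id_{+-}$ to transfer from $\P^+$ to $\P^-$ and then applies the case just handled, so $h' = (\text{composite in }\End_p(\Q)\cup\{v^-\}) \circ \id_{+-}$, up to the necessary domain restriction.

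For part~(ii): given $h\in\PHOM(\Q,\P^-)$, the $R_{\Q,\P^-}$ cell tells us $h$ is a partial homomorphism $\dom h \subseteq \Q$, $\img h \subseteq \P^-$, with no further constraint beyond those forced by $0\notin\Q$. Since $u^-\colon\Q\to\P^-$ is surjective with the property that $u^-|_{\Q\setminus\{\pm1\}}$ is bijective onto $\P^-\setminus\{0\}$, we can write $h = e \circ u^-$ (restricted to the appropriate domain) for some $e\in\End_{\text{\rm p}}(\P^-)$: indeed $e$ should send $u^-(a)\mapsto h(a)$, which is well-defined exactly on $u^-(\dom h)$ — one must check $h$ is constant on $u^-$-fibres, which holds because the only nontrivial fibre $\{1,-1\}$ of $u^-$ cannot both lie in $\dom h$ unless $h$ agrees there, and a partial $\SA$-homomorphism that identifies $1$ and $-1$ would have to identify $\neg 1 = -1$ with $\neg(-1) = 1$ consistently, forcing $h(1)=h(-1)$ anyway. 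Then $e$ decomposes via the generating set. For $h'\in\PHOM(\Q,\P^+)$ one postcomposes with $\id_{-+}$. The phrase ``the restriction of a composition'' in the statement is there precisely to accommodate that $e\circ u^-$ may need to be cut down to $\dom h$.

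\textbf{Main obstacle.} The genuinely delicate point is \emph{well-definedness} of the factoring maps — verifying that the candidate $e$ (or $\tilde e$) really is a partial $\SA$-morphism and not merely a partial function: one must confirm compatibility with $\vee,\wedge,\to,\neg$ on the relevant restricted domains, and in part~(ii) that $h$ genuinely factors through the quotient map $u^-$ (the fibre-collapsing check for $\{1,-1\}$). Once the identities $v^-\circ u^- = \id$ (on domain) and $u^-\circ v^- = \id$ are in hand and the domain bookkeeping is done carefully, the rest is the same absorb-the-sort-change bookkeeping as in Proposition~\ref{Prop:entail-multiodd}, and I would present it in that condensed style.
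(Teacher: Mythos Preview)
There is a genuine gap in your treatment of (ii), and your route through (i) is unnecessarily circuitous with a faulty intermediate step.

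For (i): your first attempt $h = v^- \circ e$ with $e \in \End_{\text{\rm p}}(\P^-)$ cannot cover the case $\pm 1 \in \img h$, since $\img v^- = \Q \setminus \{\pm 1\}$; and your ``key manipulation'' $v^- \circ u^- \circ \tilde e \circ v^- = \tilde e \circ v^-$ relies on $v^- \circ u^-$ being the identity, which holds only on $\Q \setminus \{\pm 1\}$. The paper's argument is direct and avoids this detour: since $0 \notin \dom h$ (there being no $\neg$-fixed point in $\Q$) and $h$ is injective, both $v^-(\dom h)$ and $\img h$ are subalgebras of~$\Q$ of the same even cardinality, and \cite{CPsug}*{Proposition~2.6} furnishes an $e \in \End_{\text{\rm p}}(\Q)$ with $h = e \circ v^-$ on the nose --- no passage through $\End_{\text{\rm p}}(\P^-)$ at all.

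For (ii): your fibre-collapsing check is wrong. You argue that if $\{1,-1\} \subseteq \dom h$ then $\neg$-preservation forces $h(1) = h(-1)$; but $\neg$-preservation yields $h(-1) = \neg h(1)$, and this equals $h(1)$ only when $h(1) = 0$. Concretely, for $m \geq 3$ the assignment $h(\pm 1) = \pm 2$ on $\dom h = \{1,-1\}$ is a legitimate element of $\PHOM(\Q,\P^-)$ on which your proposed $e$ cannot be well-defined. The paper does \emph{not} attempt a single uniform factorisation through $u^-$: it splits on whether $0 \in \img h$, and in the case $0 \in \img h$ replaces $u^-$ by $h_s := (g^-)^{s-1} \circ u^-$ for a suitably chosen $s$, a homomorphism that collapses the whole interval $[-s,s]$ to~$0$ and thereby has the fibre structure needed for a factorisation $h = (e \circ h_s){\restriction}_{\dom h}$ with $e \in \End_{\text{\rm p}}(\P^-)$. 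That case split, and the use of powers of $g^-$ to engineer an intermediate map with the right kernel, is the idea you are missing.
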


\begin{proof} We showed in   \cite{CPsug}*{Proposition~2.6} 
that 
for any $p \leq n$ and for $ 0 <  a_1 < \cdots < a_p$ and 
$0 < b_1 < \cdots < b_p$ there exists $e \in \PEZ{2n}$
 such that $e (a_i)= b_i$ for $1 \leq i \leq p$.  Moreover
$e$ extends to a partial endomorphism of $\Zed_{2n+1}$ which sends~$0$ to~$0$.    
Necessarily $a \in \dom e$ if and only if $-a \in \dom e$.
We may assume that $|\dom e| = 2p$ in the odd case
and $2p+1$ in the even case. 
 
Consider (i).  
Take  $h \in \PHOM( \P^-, \Q)$.   Necessarily 
$0 \notin \dom h$ (since $\Q$  has no $\neg$%
fixed point).
Then $h$ is injective  and so $|\dom h| = |\img h|$.  
(See 
\cite{CPsug}*{Proposition~2.2}.)  But   $v^-(\dom h)$ is also a subalgebra 
of $\Q$ of cardinality 
$|\dom h|$.    Moreover, 
both~$h$ and $v^-$ are strictly monotonic.  Hence  
there exists $e \in 
\End_p (\Q)$ with 
$h (a)= e (v^-(a)) $ for $a \in \dom h$ and  
$\dom (e \circ v^-) = \dom h$.  
Therefore $ h= e \circ v^-$.  This proves the first claim in~(i).

Consider $h'\in \PHOM(\P^+,\Q)$.
  From above,
$h' \circ \id_{-+}  =  e \circ v^-$ for some $e \in \End_p (\Q)$.
 Now observe that  $h' =  e \circ v^- \circ \id_{+-}$. 
This completes the proof of~(i).

Now consider (ii).
 Take  $h\in \PHOM(\Q,\P^-)$. First assume $0 \notin \img h$.  
  Then $h$ maps $\dom h$ 
into $\P^\pm \setminus \{ 0\}$ and \cite{CPsug}*{Proposition~2.2} implies that $h$ is injective
and  
$u^-{\restriction}_{\dom h}$ is also injective.    
Then there exists $e\in
\End_p(\P^-)$ such that 
$\dom e = u^-(\dom h) $ and $h(a)  = e(u^-(a))$ for all 
$a \in \dom h$.  Thus  $h=(e\circ u^-){\restriction}_{\dom h}$.

Now assume that  $0 \in \img h$. 
  Then there exists a  minimal 
$s > 0$ such that $h^{-1}(0) \subseteq  [-s,s] = (u_s^-)^{-1}(0)$.  
Let  $h_s :=   (g^-)^{s-1}  \circ u^-$,  Then $h_s$ is a homomorphism from~$\Q $ into $\P^-$.  
 Both $h$ and 
$h_s$
are strictly monotonic on
$\{ s+1, \ldots, m\}$.  
There exists  a partial
endomorphism $e$ of $\P^-$  with $\dom e = \img h$ and  
$h(a) = 
e(h_s(a))$ for $a \in \dom h$ and $a > s$.  Since $\dom h \cap
(h_s)^{-1}(0) \ne \emptyset $ and  $0 \in \dom e$
the maps $e \circ h_s$ and $h$ 
agree on the domain of $h$. 
This completes the proof of the first assertion in~(ii).

For the second assertion we consider  $\id_{+-} \circ h'$.   
\end{proof}

We have  assembled all the ingredients  for our duality theorem 
for $\CVA{2m}$. 
Table~\ref{table:varalg-alterego} indicates the maps we shall
include in our multisorted alter ego.
In the table all undecorated maps are to be viewed as being 
interpreted as maps between the indicated sorts.  That is, we have 
omitted the superscripts previously used to indicate 
the sorts when interpretations of $\SA$-morphisms between 
$\Zed_{2m}$. $\Zed_{2m-1}$, and their subalgebras, are involved.
(The position of each entry
dictates  the intended interpretations of the maps.)

\begin{table} [ht]
\begin{center}
%\begin{table}
%\begin{center}
\begin{tabular} {l|ccccc}
                      &  $ \phantom{mm}\P^-$ \phantom{mm}& & $\phantom{mm}\Q 
\phantom{mmm}$   & & $\phantom{mm}\P^+ $
\phantom{mm}
 \bigstrut[bt]  \\[0.5ex]
\hline\\[-1.5ex]
$\P^-  $  &  $  f_1, \ldots,f_{m-2};  \ g $  &  
\ \   & $v$
  %, with   $0 \notin \dom h  $  
 &
\ \     
&  $\id_{-+} $\\
&\small{[as per odd case]}&& \small{[linking partial } &&  \small{[linking isomorphism]}
\\[-1ex]
&&& \small{isomorphism]} &&
\bigstrut[t]   
                  \\[2ex]

$\Q$  &  $u$   &  & $h_2, \ldots, h_{m}, j$ &
            &  {\bf ---}   \\
 & \small[{surjective homomorphism]}   & &  \small{[as per even case]}\\[2ex] 
%  &  
%%\begin{cases}  
%$ \grh$ ,  
%disjoint from &&&\\
%&  $ (\P^- \setminus \{ 0\} )\times \{0\}$,  &&&\\
$\P^+$ &  $\id_{+-}$
&&{\bf ---}  
  &  &{\bf ---}   \\
& \small{[linking isomorphism]}  
&&&& 
\end{tabular}

\medskip 

\caption{Maps to be included in the alter ego for $\CVA{2m}$
\label{table:varalg-alterego}}
\end{center}
\end{table}

Our duality theorem is proved in essentially the same way as that for the odd case,
 and we need only give a sketch of the proof.

%%%%%%%%%%%%%%%%%%%%%%%%%%%%%%%%%%%%%%%%%%%
%%%%%%%%%%%%%%%%%%%%%%%%%%%%%%%%%%%%%%%%%%%%%%%%%

\begin{thm}[Multisorted Strong Duality Theorem for 
$\CVA {2m}$]\label{thm:EvenMulti}  %5.8
Let  $\P^-$,  $\P^+$ and $\Q$ be  disjointified copies of 
$\Zed_{2m-1}$, $\Zed_{2m-1}$ and $\Zed_{2m}$, respectively, with carrier maps $\delmv$, $\delpv$ and $\beta$, respectively.
Let 
$\CM= \{ \P^-, \P^+,\Q\}$ and let $\Tp$ be the disjoint union topology 
on  $M_0 := P^- \du P^+\du Q$ derived from the discrete
topology on each sort.   Let $
\CMT= (M_0;  G,H,K,\Tp)$, where the elements of $G\cup H$
are the maps presented in Table~\rm{\ref{table:varalg-alterego}} and $K = \{\boldsymbol 0_{\P^-}\}$.  Then $\CMT$ strongly dualises 
$\CVA{2m}$.
\end{thm}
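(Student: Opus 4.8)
The plan is to mirror the proof of Theorem~\ref{thm:OddMulti}, invoking Theorem~\ref{thm:multpigstrong} in place of the single-sorted piggyback theorem and then trimming the resulting alter ego via Lemma~\ref{lem:multi-shift}. First I would record the standing hypotheses of Theorem~\ref{thm:multpigstrong}: $\CVA{2m}=\ISP(\CM)$ with $\CM=\{\P^-,\P^+,\Q\}$ a finite set of pairwise disjoint finite algebras (by Proposition~\ref{prop:varsetc}(ii)), the common reduct functor $\fnt U\colon\SA\to\cat D_u$ forgetting $\neg,\to$, and the fact that every non-trivial subalgebra of each of $\P^-$, $\P^+$, $\Q$ is subdirectly irreducible --- this last holds because every such subalgebra is of the form $\Zed_r$, and each $\Zed_r$ with $r\geq2$ is subdirectly irreducible (cf.~\cite{CPsug}*{Section~2}). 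With $\Omega_{\P^-}=\{\delmv\}$, $\Omega_{\P^+}=\{\delpv\}$, $\Omega_{\Q}=\{\beta\}$ fixed, Theorem~\ref{thm:multpigstrong} then tells us that the alter ego built from the separation set $G$ of condition~(i), the full collection $R$ of maximal piggyback subalgebras of condition~(ii), all partial homomorphisms $H$ between subalgebras of sorts, and the one-element subalgebras $K$, yields a strong duality on $\CVA{2m}$.

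Next I would verify the separation condition (i): Lemma~\ref{lem:3sortssep} does exactly this, exhibiting for each sort $\M$ and each $a\neq b$ in $\M$ a map $\zeta$ built from $\{g^-,\id_{-+},\id_{+-},u^-\}$ into some sort $\M'$ with $\w_{\M'}(\zeta(a))\neq\w_{\M'}(\zeta(b))$; all these maps lie in $G$ as listed in Table~\ref{table:varalg-alterego} (noting $g^-$ appears as $g$ on the $\P^-$-to-$\P^-$ cell, $u^-$ as $u$, and the linking isomorphisms are present). So Theorem~\ref{thm:multpigstrong} applies with the large alter ego $\CMT_{\text{big}}$. The remaining work is to replace $\CMT_{\text{big}}$ by the slimmed-down $\CMT$ of the statement and argue it still strongly dualises. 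For this I invoke Lemma~\ref{lem:multi-shift}. Part~(a) lets us shrink $R$: since Proposition~\ref{prop:varalg-entailphoms} shows every partial homomorphism going between sorts of different parity is a composite of maps we retain together with $u,v,\id_{\pm\mp}$, and since by Proposition~\ref{prop:mult4.2} and Proposition~\ref{Prop:entail-multiodd} the same is true for partial (endo/homo)morphisms among $\P^-,\P^+$, and by \cite{CPsug}*{Proposition~6.3} (plus Proposition~\ref{prop:monpeEven}-style analysis) for $\End_p(\Q)$, every piggyback relation in Table~\ref{table:CVApigrels} --- being a graph or converse graph of such a map --- is entailed, hence $R$ may be discarded entirely (via hom-entailment, using that graphs of composites of retained maps are derivable); simultaneously part~(b) removes those elements of $H$ expressible as compositions of the retained generators, and part~(d) removes any partial map having a proper extension already in $G\cup H$. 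Finally part~(c) allows the one-element subalgebra set $K$ to stay minimal as $\{\boldsymbol 0_{\P^-}\}$, since the $\neg$-fixed points of $\P^+$ and of composites thereof are recoverable through the linking isomorphisms.

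The main obstacle is the bookkeeping in the reduction step: one must confirm that after collapsing $R$ and trimming $H$, the remaining maps $\{f_1,\dots,f_{m-2},g\}$ on $\P^-$ (odd-case generators), $\{h_2,\dots,h_m,j\}$ on $\Q$ (even-case generators), together with $u$, $v$, $\id_{-+}$, $\id_{+-}$, still hom-entail every partial operation and relation in $\CMT_{\text{big}}$. This is precisely where Propositions~\ref{Prop:entail-multiodd}, \ref{prop:varalg-entailphoms} and \cite{CPsug}*{Propositions~2.8,~2.9,~6.3} are deployed; the point is that each $\PHOM(\M,\M')$ for mixed parities factors through $u$ or $v$ composed with endomorphism generators of a single sort, so no genuinely new generators beyond the linking maps are forced. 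Since each entailment is of the restricted form covered by Lemma~\ref{lem:multi-shift}(b)--(d), strongness is preserved at every step, and we conclude that $\CMT=(M_0;G,H,K,\Tp)$ as displayed strongly, hence fully, dualises $\CVA{2m}$. I would close by remarking, as after Theorem~\ref{thm:OddMulti}, that symmetry between $\P^-$ and $\P^+$ means the choice to let $\P^-$ carry the endomorphism structure (and to keep only $\boldsymbol 0_{\P^-}$ in $K$) is a convention, not a constraint.
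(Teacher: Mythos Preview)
Your overall strategy matches the paper's: verify the hypotheses of Theorem~\ref{thm:multpigstrong}, invoke it to get a large strongly dualising alter ego, then trim via Lemma~\ref{lem:multi-shift}. The separation step, the appeal to Proposition~\ref{prop:varalg-allrels} for the piggyback relations, and the use of Propositions~\ref{Prop:entail-multiodd} and~\ref{prop:varalg-entailphoms} are all as in the paper.

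There is, however, a genuine gap. You list $\{f_1,\dots,f_{m-2},g\}$ as the ``odd-case generators'' of $\End_{\text{\rm p}}(\P^-)$, but the generating set from \cite{CPsug}*{Proposition~2.9} includes $f_0$ as well, and $f_0$ is \emph{not} in Table~\ref{table:varalg-alterego}. None of the propositions you cite (\ref{Prop:entail-multiodd}, \ref{prop:varalg-entailphoms}, or \cite{CPsug}*{Propositions~2.8, 2.9, 6.3}) show that $f_0$ is entailed by the maps you retain; indeed Proposition~\ref{Prop:entail-multiodd} presupposes a generating set for $\PEZ{2m-1}$, which already contains~$f_0$. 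The paper closes this gap by an explicit computation: $f_0$ coincides with $v^-\circ h_m\circ\cdots\circ h_2\circ u^-$, so the linking maps $u,v$ together with the even-case generators on~$\Q$ recover~$f_0$, and hence all of $\End_{\text{\rm p}}(\P^-)$. Without this observation your entailment chain breaks, since you cannot generate $\End_{\text{\rm p}}(\P^-)$ from $\{f_1,\dots,f_{m-2},g\}$ alone.

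A minor point: your invocation of Lemma~\ref{lem:multi-shift}(a) to ``shrink~$R$'' is the wrong direction---part~(a) is about \emph{enlarging}~$R$. Discarding the piggyback relations is justified instead by noting (via Proposition~\ref{prop:varalg-allrels}) that each is the graph, or converse graph, of a partial homomorphism already entailed by $G\cup H$, so parts~(b)--(d) do the work.
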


\begin{proof} With our selected sorts and carrier maps, the 
proposed alter ego ensures that 
the
 separation condition is satisfied; see
 Lemma~\ref{lem:3sortssep}. 
We now consider an enlarged alter ego in which we include 
all the structure that Theorem~\ref{thm:multpigstrong} tells us will give a strong duality,  Then we can discard from this
relational structure any operations,  partial operations, relations or constants which are entailed by the structure we retain.  
 Proposition~\ref{prop:varalg-allrels} implies that all the piggyback relations are entailed by the full sets of partial endomorphisms and  partial  homomorphisms which we have included in our enlarged alter ego to guarantee strongness.  
%lc1801: Added
Observe that $f_0\colon \P^-\setminus\{0\}\to\P^-\setminus\{0\}$ coincides with $v^-\circ h_m\circ \cdots\circ h_2 \circ u^-$.  Hence, by \cite{CPsug}*{Propositions~2.8 and~2.9}, every element of $\End_p(\P^-)$ and $\End_p(\Q)$ can be obtain as a composition of maps in $G\cup H$.  
%%%%
Finally we call on 
Proposition~\ref{Prop:entail-multiodd} and Proposition~\ref{prop:varalg-entailphoms}.
We conclude that, with  $G \cup H \cup K$
as in our  proposed alter ego, we indeed obtain a  strong duality for $\CVA{2m}$.  
\end{proof}

We have made our duality economical at the price of a loss of symmetry
as regards $\P^-$ and $\P^+$.  When working with the duality 
it is likely to be convenient to think in terms of a symmetric formulation.
However
 it is worth noting that we cannot 
thereby dispense with the linking maps $\id_{-+}$ and $\id_{+-}$ since 
these are needed for the separation lemma. The same comment applies to the duality for~$\SA_{2m-1}$.

We should expect some asymmetry in relation to the  piggyback relations,  in  Theorems~\ref{thm:OddMulti} and~\ref{thm:EvenMulti},
and in their single-sorted
 counterparts in \cite{CPsug}.  This stems from the way the carrier maps 
operate.   Hints of this feature will be visible in our discussion
of Kleene algebras in Section~\ref{sec:why-multi} and it will
come to the fore when we later use our duality to  
describe free algebras \cite{CPfree}.

We draw attention to the omission of $f_0$ from $\PEZ{2m-1}$
that was made possible by the presence of the (necessary) map
$v^-$ which links $\P^-$ and~$\Q$.  This aside,
 the duality theorem for
$\CVA{2m}$
  encodes within it all the information for the duality theorem for 
$\SA_{2m-1} = \CVA{2m-1}$. We just delete all reference to the sort $
\Q$ and maps which involve it, and insert $f_0$ to  recapture all the information
required for  Theorem~\ref{thm:OddMulti}.

%%%%%%%%%%%%%%%%%%%%%%%%%%%%%%%%%%%%%%%%%%%%%%%
%%%%%%%%%%%%%%%%%%%%%%%%%%%%%%%%%%%%%%%%%%%%%%%

\section{Multisorted dualities for
Sugihara monoids}
\label{sec:mon-multi}

We now consider the monoid case, referring  back to 
Section~\ref{sec:prelim} for the basic algebraic results. 
We shall head  straight for a multisorted
duality for the variety $\CVM{2m}$, encompassing
$\SM_{2m-1}$ along the way rather than separating it out.
In all other respects,
%%%%%%
 we shall structure  our argument in the same way as
the corresponding argument in Section~\ref{sec:alg-multi}:
%%%%
sorts and linking maps; 
carrier maps and separation; piggyback relations; 
entailment of maps on and between sorts; 
the strong duality theorem for $\CVM{2m}$.
Throughout we 
 highlight where adaptations do and do not need to be made to transition
from~$\SA$ to~$\SM$.
Let $\fnt V\colon \SM \to \SA$ denote the natural forgetful functor.
Thus $\fnt V(\W_k ) = \Zed_k$ for each~$k$.

We replace the three sorts $\P^-, \P^+$ and $\Q$ 
from Section~\ref{sec:alg-multi}
by sorts 
denoted $\S^-, \S^+$ and $\T$.  The latter will be disjointified
copies of $\W_{2m-1}, \W_{2m-1}$ and $\W_{2m}$, respectively.   
We may assume that $\fnt V(\S^\pm)=\P^\pm$ and $\fnt V(\T) = \Q$.

Sugihara monoid  homomorphisms must preserve
$\mathbf t$ (and also $\mathbf f = \neg \mathbf t$).  
The isomorphisms $\id_{-+}$ and $\id_{+-}$ are 
$\SM$-morphisms between $\S^-$ and $\S^+$ and
Proposition~\ref{Prop:entail-multiodd}  carries over, \textit{mutatis mutandis},
to the monoid case.

Since $0$ in any 
Sugihara monoid is a fixed point for~$\neg$, there are no 
monoid (partial) homomorphisms from $\W_{2m-1}$ to any even Sugihara 
monoid. 
Any $\SM$-morphism from 
$\W_{2m}$
to any $\W_{2k-1}$ must send $\pm 1$ to~$0$.  
The previously-defined map $u\colon \Zed_{2m} \to 
\Zed_{2m-1}$ sends $\pm 1$ to $0$ and so also serves as
an $\SM$-morphism from $\W_{2m}$ onto $\W_{2m-1}$.  
We define $u^\pm$ in just the same way as before, but now using the monoid sorts.

We shall need an  $\SM$ analogue
 of 
Proposition~\ref{prop:varalg-entailphoms}, whereby partial homomorphisms
between sorts are  derived by composition from link maps between
sorts and generating sets for partial endomorphism monoids.
We do still have available the endomorphism~$g$ which played an important role in
the proof of Proposition~\ref{prop:varalg-entailphoms}(ii).   
A key difference however emerges with~(i).     The distinguished constant(s) must be in the domain of any monoid 
(partial) homomorphism.  
Consequently the sets
$\PHOMt(\S^\pm,\T)$ are empty, since any 
member of  $\PHOM(\P^\pm,\Q)$ excludes~$0$ from its domain.
Hence entailment does not arise.  Note that 
analogues of the maps $v,v^{\pm}$ do not exist, but are not needed since $v^-$  was used  in Proposition~\ref{prop:varalg-entailphoms} only to handle $\PHOM(\P^\pm,\Q)$.  

When  applying Proposition~\ref{prop:varmon-entailphoms} later we shall take advantage of Propositions~\ref{lem:monpe:odd}
and~\ref{prop:monpeEven}.

\begin{prop}[%
partial endomorphisms and homomorphisms]
 \label{prop:varmon-entailphoms} \

\begin{enumerate}

\item[{\rm (i)}]   %odd case 
Any member of 
 $\End_{ß\text{\rm p}} (\S^-)$, $\Endt_{\text{\rm p}}(\S^+)$, 
$\PHOMt (\S^-, \S^+)$  or $\PHOMt(\S^+,\S^-)$
is expressible as  a
 composition of maps drawn from a set obtained 
by adding  $\id_{-+}$ and $ \id_{+-}$ to a 
generating set for $\PEW{2m-1}$
whose members are interpreted on $\S^-$. 

\item[{\rm (ii)}]  Any $h \in \Endt_{\text{\rm p}}(\T)$ is entailed by a generating set
for 
 $\Endt_{\text{\rm p}}(\W_{2m})$ 
interpreted on $\T$.

\item[{\rm (iii)}]
Any $h \in \PHOMt(\T, \S^-)$ is 
expressible as a restriction of a composition of maps drawn from a set obtained 
by adding $u^-$  to 
a generating set for $\PEW{2m}$ interpreted on  $\S^-$.
     
Any  $h' \in  \PHOMt(\T, \S^+)$ is expressible as a restriction of a composition
of maps drawn from a set obtained by adding $u^-$ and
$\id_{-+}$  to a generating set for $\PEW{2m}$ interpreted on  $\S^-$. 
\end{enumerate} 
\end{prop}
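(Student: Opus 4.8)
The plan is to follow the proofs of Propositions~\ref{Prop:entail-multiodd} and~\ref{prop:varalg-entailphoms}, replacing the algebra data on partial endomorphisms by the monoid data of Propositions~\ref{lem:monpe:odd} and~\ref{prop:monpeEven} and using that the linking maps $\id_{-+}$, $\id_{+-}$ and $u^-$ are all $\SM$-morphisms. The structural simplification over the algebra case is the observation recorded just above the proposition: $\PHOMt(\S^\pm,\T)=\emptyset$, because any $\SM$-morphism must carry $\mathbf t$, interpreted as $0$ on $\S^\pm$, to an element of $\T\cong\W_{2m}$, which has no $\neg$-fixed point. So there is nothing to prove for partial homomorphisms from the odd sorts into the even sort, and (i)--(iii) do exhaust the (partial) morphisms on and between the three sorts.

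For part~(i), since $\S^-$ and $\S^+$ are disjointified copies of $\W_{2m-1}$ and $\id_{-+},\id_{+-}$ are mutually inverse $\SM$-isomorphisms, the argument of Proposition~\ref{Prop:entail-multiodd} transfers word for word: a map of type $\Endt_{\text{\rm p}}(\S^+)$, $\PHOMt(\S^-,\S^+)$ or $\PHOMt(\S^+,\S^-)$ is respectively of the form $\id_{-+}\circ e^-\circ\id_{+-}$, $\id_{-+}\circ e^-$ or $e^-\circ\id_{+-}$ for some $e^-\in\PEW{2m-1}$, and $\PEW{2m-1}$ is generated by $\{f_1,\dots,f_{m-1},g\}$ by Proposition~\ref{lem:monpe:odd}(ii). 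Part~(ii) is immediate, since a generating set for $\Endt_{\text{\rm p}}(\T)\cong\PEW{2m}$ generates it by definition.

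Part~(iii) is where a little work is needed. The key preliminary remark is that every $h\in\PHOMt(\T,\S^-)$ automatically has $0\in\img h$: identifying $\T$ and $\S^-$ with $\W_{2m}$ and $\W_{2m-1}$, the constant $\mathbf t$ is $1$ in $\T$ and $0$ in $\S^-$, so $h(1)=0$. Hence only the case $0\in\img h$ of the proof of Proposition~\ref{prop:varalg-entailphoms}(ii) is relevant; transcribed here, it chooses the minimal $s>0$ with $h^{-1}(0)\subseteq[-s,s]$, sets $h_s:=(g^-)^{s-1}\circ u^-$ (an $\SM$-morphism from $\T$ into $\S^-$ because $g^-$ and $u^-$ are), and produces a partial endomorphism $e$ with $\dom e=\img h$ and $h=(e\circ h_s){\restriction}_{\dom h}$; as the points of $(h_s)^{-1}(0)\cap\dom h$ lie in $h^{-1}(0)$, this $e$ fixes $0$ and so belongs to $\PEW{2m-1}$. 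For $h'\in\PHOMt(\T,\S^+)$ one applies the foregoing to $\id_{+-}\circ h'\in\PHOMt(\T,\S^-)$ and then post-composes with $\id_{-+}$.

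The proposition thus presents no genuine combinatorial difficulty: Propositions~\ref{lem:monpe:odd} and~\ref{prop:monpeEven}, together with the already-established algebra-case entailments, supply everything. The only points calling for care are the bookkeeping of the previous paragraph --- checking that each composite inherited from the algebra proofs preserves $\mathbf t$, hence remains an $\SM$-morphism, which holds because $u^-$, $g^-$ and the partial endomorphisms that occur all fix $0$ --- and the one genuinely new input, the emptiness of $\PHOMt(\S^\pm,\T)$, which is what makes any analogue of the maps $v,v^\pm$ superfluous.
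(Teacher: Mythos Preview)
The paper does not include an explicit proof of this proposition, leaving it as the evident monoid analogue of Propositions~\ref{Prop:entail-multiodd} and~\ref{prop:varalg-entailphoms}; your proof supplies exactly that adaptation and is correct. The one step worth spelling out is your assertion that $(h_s)^{-1}(0)\cap\dom h\subseteq h^{-1}(0)$: this holds because $1\in\dom h$ with $h(1)=0$ (since $h$ preserves~$\mathbf t$), minimality of~$s$ forces $\pm s\in\dom h$ with $h(\pm s)=0$, and order-preservation then gives $0=h(1)\leq h(b)\leq h(s)=0$ for every $b\in[1,s]\cap\dom h$, so the factoring map~$e$ is well defined, fixes~$0$, and hence lies in $\PEW{2m-1}$.
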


Carrier maps act on the lattice reducts of the sorts 
and no change is required when we pass to Sugihara monoids.
So we take $\w_{\S^-} = \delmv$,  $\w_{\S^+} = \delpv$ and
$\w_\T = \beta$.   Lemma~\ref{lem:3sortssep} established 
the separation condition
for the algebra case.  All the maps used there are 
$\SM$-homomorphism so separation also holds for the monoid case.

We want  to identify the forms taken by (maximal) $\SM$-subalgebras 
of sublattices 
$(\w_{\M},\w_{\M'})^{-1}(\leq) $ 
of $\M,\M'$, where $\M,\M' 
\in \{\S^-,\S^+,\T\}$.

Consider first the  choices of $\M,\M'$ for  which a characterisation is available of \emph{all} $\SA$-subalgebras
of $(\w_{\M}, \w_{\M'})^{-1}(\leq)$,  where $\M,\M' \in \{ \P^-, \P^+,
\Q\}$, as a graph of an $\SA$-partial homomorphism or converse
of such a graph.  The only situation in which we identified only the
\emph{maximal} $\SA$-subalgebras of $(\w_{\M},\w_{\M'})^{-1}(\leq)$
was that in which $\M = \M' = \Q$.  
So suppose $S$ is an $\SM$-subalgebra of 
$(\w_{\M}, \w_{\M'})^{-1}(\leq)$, where $\M,\M' \in \{\S^-.\S^+, \T\}$
and $\M,\M'$ are not both~$\T$.   Then $\fnt V (S) $ 
takes the form shown  in the $\fnt V (\M), \fnt V(\M')$ cell in 
Table~\ref{table:CVApigrels}.  
This tells us that $\fnt V(S)$, or its converse, is the graph of
 an $\SA$-morphism, possibly with additional restrictions 
on domain and/or image of the map.
We must then insist that the morphism
  be an $\SM$-morphism, and consider whether any
restrictions on domain or image impose further constraints. 
Specifically, 
let $\M \in \{ 
\S^-, \S^+, \T\}$,  so $\mathbf t^{\S^\pm}  = 0$ and 
$\mathbf t^{\T} = 1$.  
For sorts $\M,\M'$ and 
 $h \in \PHOMt (\M, \M')$, necessarily $\mathbf t^{\M} \in \dom h$ and 
$\mathbf  t^{\M'} = h(\mathbf t^{\M})$.  In addition, 
a subalgebra of a  product $\M 
\times\M'$ of sorts contains   $(\mathbf t^\M,  \mathbf t^{\M'})$.
We note in particular that $(\pm 1,0)$ belongs to any $\SM$-subalgebra of $\T \times \S^\pm $.  Since 
$
(1,0) \notin 
(\delmv, \delpv)^{-1}(\leq)$ and $(1,0) \notin (\delmv, \beta)^{-1}(\leq)$ there are no
$\SM$-subalgebras of $\S^- \times \S^+$ or of 
$\T \times \S^+$.   This signals the key differences between
the dualities for Sugihara algebras and monoids, in both odd and even cases.
%%%%%%%%%%%%%%%%%%%%%%%%%%%

The $\SM$-subalgebras
of $(\beta,\beta)^{-1}(\leq)$
require special consideration.
We know that a maximal $\SA$-subalgebra of 
$(\beta,\beta)^{-1}(\leq)$ is the graph of an $\SA$-homomorphism.  But  there may be fewer 
$\SM$-subalgebras of  $(\beta,\beta)^{-1}(\leq)$
than $\SA$-subalgebras, and we have no guarantee that
a maximal element in $R_{\Q,\Q}$ will be the graph of an
$\SM$-homomorphism.  Hence we need to re-run the proof of the 
$\SA$ result in \cite{CPsug}*{Proposition~6.3 (residual case (a))} to check it works analogously when applied in the 
$\SM$ setting. 
Table~\ref{table:CVMpigrels} parallels 
Table~\ref{table:CVApigrels}.

\begin{table}[ht]
 \begin{center}
\begin{tabular} {l|ccccc}
                      &  $ \phantom{mm}\S^-$ \phantom{mm}& & $\phantom{mm}\T 
\phantom{mmm}$   & & $\phantom{mm}\S^+ $
\phantom{mm}
 \bigstrut[b]  \\[1ex]
\hline
&&&&&\\[-.7ex]
$\S^-  $  &  $ \grh $
  &
\ \   &  {\bf ---}  
 %$\gr$h
 %, with   $0 \notin \dom h  $  
 &
\ \     
&  {\bf ---} %$\grh $
\bigstrut[t]  
%\\
%&&&&& with $0 \notin \dom h, \, 0 \notin  \img h$ 
                  \\[2ex]
$\T$  &  $\grh$   &  &
             $\grh  $ %, with $h $ invertible
 & & 
 {\bf ---}
%$\grh$ with $0 \notin \img h $
\\[2ex] 
$\S^+$ &  $\grh$ or $\grk^\smallsmile $
&&   {\bf ---} 
   %$\grk^\smallsmile$, with $0 \notin \img k $
   &  & $ \grk^\smallsmile   $

\end{tabular}
\end{center}
\medskip

\caption{Piggyback relations for $\CVM{2m}$ \label{table:CVMpigrels}}
\end{table}

%%%%%%%%%%%%%%%%%%%%%%%%%%%%%%%%%%%%%%%%%%%%

We identified generating sets for the partial endomorphisms of $\W_{2m-1}$ and of $\W_{2m} $  in 
Lemmas~\ref{lem:monpe:odd} and \ref{prop:monpeEven} and we incorporate these into our alter ego for~$\CVM{2m}$.

\begin{table} [ht]
\begin{center}
%\begin{table}
%\begin{center}
\begin{tabular} {l|ccccc}
                      &  $ \phantom{mm}\S^-$ \phantom{mm}& & $\phantom{mm}\T 
\phantom{mmm}$   & & $\phantom{mm}\S^+ $
\phantom{mm}
 \bigstrut[b]  \\[1ex]
\hline
&&&&&\\[-.7ex]
$\S^-  $  &  $  f_1, \ldots,f_{m-2};\ g $  &
\ \   & 
 {\bf ---}    %, with   $0 \notin \dom h  $  
 &
\ \     
&  $\id_{-+} $\\
&\small{[as per odd case]}&&&&  \small{[linking isomorphism]}\bigstrut[t]   
                  \\[2ex]
$\T$  &  $u$   &  & $\ov{h_2}, \ldots , \ov{h_{m-1}}, \ov{j}$ &
            &    {\bf ---}  \\
 & \small[{surjective homomorphism]}   & &  \small{[as per even case]}\\[2ex] 
$\S+$ &  $\id_{+-}$
&& {\bf ---}  
  &  &  {\bf ---}  \\
& \small{[linking isomorphism]}  
&&&& 
\end{tabular}
\medskip

\caption{Maps to be included in the alter ego for $\CVM{2m}$
\label{table:varmon-alterego}}
\end{center}
\end{table}

We present 
 without further comment our duality theorems for $\SM_{2m-1} $
and
%for
 $\CVM{2m}$.  The requisite  facts have  been established  and  no new features arise.

\begin{thm}[Two-sorted Piggyback Strong Duality Theorem for 
$\SM_ {2m-1}$]\label{thm:OddMultimon}  %5.8odd
Fix~$m$ with $m\geq 2$.  Take disjoint sorts $\S^-$ and $\S^+$ each   isomorphic  to $\W_{2m-1}$.
  Let $\Omega_{\S^-} = \{ \delmv\}$ and $\Omega_{\S^+} = \{\delpv\}$.  Consider the two-sorted 
structure based on  $\CM := \{ \S^-, \S^+\} $ having universe
$M_0 : = S^- \du S^+$.  
Let 
\[
G = \{  g, \id_{-+},  \id_{+-}\},\qquad 
H= 
\{   f_1,f_2, \ldots, 
 f_{m-2} \}, \qquad 
K = \{\boldsymbol 0_{\S^-}\}.
\]
\upshape{(}Here $g, f_0,\ldots, f_{m-2}$ are to be interpreted on $\P^-$.\upshape{)} 
 Then $\CMT := (M_0; G, H, K, \Tp) $ strongly dualises $\SM_{2m-1}$.   
\end{thm}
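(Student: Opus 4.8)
The plan is to prove the theorem by adapting the proof of Theorem~\ref{thm:OddMulti} almost verbatim, with $\W_{2m-1}$ in place of $\Zed_{2m-1}$ and $\S^{\pm}$ in place of $\P^{\pm}$, and invoking the Multisorted Piggyback Strong Duality Theorem~\ref{thm:multpigstrong}. The hypotheses of that theorem hold: the lattice-reduct functor is a forgetful functor $\SM_{2m-1}\to\cat{D}_u$, and every non-trivial subalgebra of $\S^{+}$ or $\S^{-}$ is isomorphic to some $\W_r$, hence subdirectly irreducible. The separation condition of Theorem~\ref{thm:multpigstrong}(i) requires no new work: the carrier maps $\delmv,\delpv$ act only on the lattice reducts, so the four-case analysis of Lemma~\ref{lem:2sortssep} goes through word for word once one notes that each homomorphism appearing there — $\id_{\S^-}$, the powers $(g^-)^k$, $\id_{-+}$, and the composites $(g^+)^a\circ\id_{-+}$ — is an $\SM$-morphism, since $g$, $\id_{-+}$ and $\id_{+-}$ all fix $\mathbf t=\mathbf f=0$. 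So Theorem~\ref{thm:multpigstrong} hands us a strong, hence full, duality on $\SM_{2m-1}$ for the fully augmented alter ego built from $G$, $H$, $K$, $R$ as prescribed there.

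The second step is to streamline that alter ego down to the displayed one via the Multisorted $\CMT$-Shift Strong Duality Lemma~\ref{lem:multi-shift}, exactly as was done for $\SA_{2m-1}$. For $\M,\M'\in\{\S^{-},\S^{+}\}$, any $\SM$-subalgebra of the sublattice $(\w_{\M},\w_{\M'})^{-1}(\leq)$ is in particular an $\SA$-subalgebra of it, so Proposition~\ref{prop:mult4.2} forces it to be the graph of a partial homomorphism between the sorts, or the converse of one; and, as recorded in Table~\ref{table:CVMpigrels}, the $\S^{-},\S^{+}$ cell is empty, because such a subalgebra would have to contain $(\mathbf t^{\S^-},\mathbf t^{\S^+})=(0,0)$ whereas $\delmv(0)=1>0=\delpv(0)$. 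By the $\SM$-analogue of Proposition~\ref{Prop:entail-multiodd} recorded in Proposition~\ref{prop:varmon-entailphoms}(i), every member of $\Endt_{\text{\rm p}}(\S^{-})$, $\Endt_{\text{\rm p}}(\S^{+})$, $\PHOMt(\S^{-},\S^{+})$ and $\PHOMt(\S^{+},\S^{-})$ is a composite of $\id_{-+},\id_{+-}$ with a generating set for $\PEW{2m-1}$ interpreted on $\S^{-}$; and by Proposition~\ref{lem:monpe:odd}(ii) that generating set may be chosen without $f_0$ — the one genuine difference from the algebra case, reflecting the fact that $0$ lies in the domain of every $\SM$-morphism. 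Hence the relations in $R$ are entailed by the displayed $G\cup H$, and Lemma~\ref{lem:multi-shift}(b),(c),(d) lets me discard $R$, delete the superfluous partial endomorphisms from $H$, and keep only the single nullary operation $\boldsymbol 0_{\S^-}$ — the same reduction that was made for $\SA_{2m-1}$ in the proof of Theorem~\ref{thm:OddMulti} and in \cite{CPsug}*{Theorem~4.3}. This leaves precisely the alter ego $\CMT=(M_0;G,H,K,\Tp)$ of the statement.

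The step most likely to need attention — though it is hardly an obstacle, since the paper has front-loaded the content into Sections~\ref{sec:prelim} and~\ref{sec:alg-multi} and the discussion around Tables~\ref{table:CVMpigrels} and~\ref{table:varmon-alterego} — is the verification that the shapes listed in Table~\ref{table:CVMpigrels} exhaust the $\SM$-subalgebras of the sublattices $(\w_{\M},\w_{\M'})^{-1}(\leq)$. Here one argues: such an $\SM$-subalgebra is in particular an $\SA$-subalgebra of the same sublattice, hence by Proposition~\ref{prop:mult4.2} it is a subrelation of a graph, or of a converse graph; being closed under the $\SM$-operations it is then itself the graph, or converse of the graph, of an $\SM$-partial homomorphism, since the truth constants lie in its domain and must be preserved; and in the two-sorted odd case Proposition~\ref{prop:mult4.2} describes \emph{every} such $\SA$-subalgebra, not merely the maximal ones, so no subtlety of the kind that attends a $\T\times\T$-type cell arises. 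Apart from this, the $\SA$-to-$\SM$ transition reduces to the two bookkeeping points flagged above: $f_0$ drops out of the partial-endomorphism generators, and the piggyback relations linking $\S^{-}$ and $\S^{+}$ vanish.
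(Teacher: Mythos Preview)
Your proposal is correct and follows essentially the same approach as the paper, which presents the theorem ``without further comment'' after having front-loaded all the ingredients into the preceding discussion of Section~\ref{sec:mon-multi} (separation via the observation that the maps in Lemma~\ref{lem:2sortssep} are $\SM$-morphisms, piggyback relations via Table~\ref{table:CVMpigrels}, entailment via Proposition~\ref{prop:varmon-entailphoms}(i), and the generating set via Proposition~\ref{lem:monpe:odd}(ii)). Your write-up simply makes explicit the assembly that the paper leaves to the reader, including the correct identification of the two salient differences from the $\SA$ case: the vanishing of $R_{\S^-,\S^+}$ and the removal of $f_0$.
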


\begin{thm}[Three-sorted Piggyback Strong Duality Theorem for 
$\CVM {2m}$]\label{thm:EvenMultimon}  %5.8
Let  $\S^-$,  $\S^+$ and $\T$ be  disjointified copies of 
$\W_{2m-1}$, $\W_{2m-1}$ and $\W_{2m}$, respectively, with carrier maps $\delmv$, $\delpv$ and $\beta$, respectively.
Let 
$\CM= \{ \S^-, \S^+,\T\}$ and let $\Tp$ be the disjoint union topology 
on  $M_0 := S^- \du S^+\du T$ derived from the discrete
topology on each sort.   Let $
\CMT= (M_0;  G,H,K,\Tp)$, where the elements of $G\cup H$
are the maps presented in Table~{\rm \ref{table:varmon-alterego}} and $K = \{\boldsymbol 0_{\S^-}\}$.  Then $\CMT$ strongly dualises 
$\CVM{2m}$.
\end{thm}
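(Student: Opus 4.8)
The plan is to imitate the proof of Theorem~\ref{thm:EvenMulti} step by step, recording only where the transition from $\SA$ to $\SM$ forces a genuine change. First I would invoke Theorem~\ref{thm:multpigstrong} with $\CA = \CVM{2m} = \ISP(\W_{2m},\W_{2m-1})$, using the forgetful functor $\fnt V$ composed with the lattice-reduct functor to land in $\cat D_u$; the carrier maps are $\delmv,\delpv,\beta$ on $\S^-,\S^+,\T$ exactly as in the algebra case. Since these are the same underlying set maps as in Lemma~\ref{lem:3sortssep}, and every linking map used there ($g^-$, $\id_{-+}$, $\id_{+-}$, $u^-$) is an $\SM$-morphism, the separation condition carries over verbatim. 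Next, since every non-trivial subalgebra of each of $\W_{2m-1}$ and $\W_{2m}$ is subdirectly irreducible (by the classification of finite subdirectly irreducible Sugihara monoids recalled in Section~\ref{sec:prelim}), Theorem~\ref{thm:multpigstrong} guarantees a strong duality once we enrich the alter ego with \emph{all} partial endomorphisms and partial homomorphisms between the sorts, together with the piggyback relations and the one-element subalgebra $\{\boldsymbol 0_{\S^-}\}$.

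Then I would prune this enlarged alter ego down to the one displayed in Table~\ref{table:varmon-alterego}, using the Multisorted $\CMT$-Shift Strong Duality Lemma~\ref{lem:multi-shift}. The piggyback relations are disposed of by Table~\ref{table:CVMpigrels}: each is the graph, or converse graph, of an $\SM$-(partial) homomorphism, hence entailed once the full partial-hom sets are present; and the entry for $R_{\T,\T}$ needs the remark, already made in the discussion preceding Table~\ref{table:CVMpigrels}, that re-running \cite{CPsug}*{Proposition~6.3} in the $\SM$ setting shows maximal subalgebras of $(\beta,\beta)^{-1}(\leq)$ are still graphs of (now $\SM$-)endomorphisms of $\T$. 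The partial endomorphisms of $\S^\pm$ and of $\T$ are generated, respectively, by $\{f_1,\dots,f_{m-1},g\}$ and by $\{\ov{h_2},\dots,\ov{h_{m-1}},\ov j\}$, by Propositions~\ref{lem:monpe:odd} and~\ref{prop:monpeEven}; here, crucially, $f_0$ does \emph{not} appear, and no analogue is needed, because $\PHOMt(\S^\pm,\T)=\emptyset$, so — unlike the algebra case — we do not have to recover $f_0^-$ as $v^-\circ h_m\circ\cdots\circ h_2\circ u^-$. Partial homomorphisms between sorts are then handled by Proposition~\ref{prop:varmon-entailphoms}: parts~(i) and~(iii) express every member of $\PHOMt(\S^-,\S^+)$, $\PHOMt(\S^+,\S^-)$, $\PHOMt(\T,\S^-)$, $\PHOMt(\T,\S^+)$ as a composition built from $\{f_1,\dots,f_{m-2},g\}$, $\{\ov{h_2},\dots,\ov{h_{m-1}},\ov j\}$, $u$, $\id_{-+}$ and $\id_{+-}$; the sets $\PHOMt(\S^\pm,\T)$ and $\PHOMt(\T,\S^+)$-into-$\T$ directions are empty and so contribute nothing. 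Applying Lemma~\ref{lem:multi-shift}(b),(c),(d) to strip out the redundant maps and relations leaves precisely $\CMT$ as in Table~\ref{table:varmon-alterego}, which therefore strongly dualises $\CVM{2m}$.

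The main obstacle, such as it is, is bookkeeping rather than mathematics: one must be careful that the constant $\mathbf t$ (equivalently $\mathbf f=\neg\mathbf t$), which takes value $0$ on $\S^\pm$ and $1$ on $\T$, forces $\pm 1\mapsto 0$ on every $\SM$-morphism $\T\to\S^\pm$ and excludes $0$ from the domain of every $\SM$-morphism $\S^\pm\to\T$ — this is exactly why $\PHOMt(\S^\pm,\T)=\emptyset$ and why there are no $\SM$-subalgebras of $\S^-\times\S^+$ or $\T\times\S^+$, as noted before Table~\ref{table:CVMpigrels}. Everything else is an appeal to a result already in hand. Accordingly I would write the proof as a short paragraph listing these ingredients in order — separation via Lemma~\ref{lem:3sortssep}; subdirect irreducibility of non-trivial subalgebras; Theorem~\ref{thm:multpigstrong} for the enlarged alter ego; Table~\ref{table:CVMpigrels} plus the $\SM$-version of \cite{CPsug}*{Proposition~6.3} for the piggyback relations; Propositions~\ref{lem:monpe:odd}, \ref{prop:monpeEven} for partial endomorphisms; Proposition~\ref{prop:varmon-entailphoms} for partial homomorphisms between sorts; and Lemma~\ref{lem:multi-shift} to reduce to Table~\ref{table:varmon-alterego} — and leave it at that, exactly as the text promises (``The requisite facts have been established and no new features arise'').
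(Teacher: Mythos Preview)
Your proposal is correct and follows exactly the route the paper sets up in Section~\ref{sec:mon-multi}; indeed the paper gives no explicit proof at all, merely the remark that ``the requisite facts have been established and no new features arise'', and your outline is a faithful unpacking of those facts (separation via Lemma~\ref{lem:3sortssep}, piggyback relations via Table~\ref{table:CVMpigrels} and the $\SM$-rerun of \cite{CPsug}*{Proposition~6.3}, generating sets from Propositions~\ref{lem:monpe:odd} and~\ref{prop:monpeEven}, entailment of inter-sort maps via Proposition~\ref{prop:varmon-entailphoms}, and the Shift Lemma~\ref{lem:multi-shift}). Two cosmetic slips to tidy: the $R_{\T,\T}$ relations are graphs of \emph{partial} endomorphisms, not total ones, and your phrase ``$\PHOMt(\T,\S^+)$-into-$\T$ directions'' is garbled --- only the sets $\PHOMt(\S^\pm,\T)$ are empty, while $\PHOMt(\T,\S^+)$ is non-empty and handled by Proposition~\ref{prop:varmon-entailphoms}(iii).
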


Observe how our elimination of the partial endomorphism $f_0$
from our alter ego for the duality for $\CVA{2m-1}$ allows  
better alignment between the duality theorems for $\CVA{2m}$ and $\CVM{2m}$ than if we had retained it.   %%%%%%%%%%%%%%%

\section{Appendix: The case for multisorted dualities}\label{sec:why-multi}
%In Section~\ref{sec:multisorted} we shall provide a brief formal account
%of multisorted duality theory as we shall apply it in the Sugihara setting.   Here we adopt a more informal style. 
 Here we give a  brief contextual  appraisal of the role of multisorted duality theory, as it applies to distributive-lattice-based algebras and illustrated  by Kleene algebras and lattices. 
We do not include 
in this appendix any results not previously known.  However the material is scattered across a number of sources and the methods 
we want to discuss are often applied to families of classes of algebras, and this can obscure the ideas.
Our  purpose here  is to exemplify through  our running examples
how a multisorted
 approach can be seen as a facilitator, allowing different types of
dual representation to operate collaboratively.  We shall take these ideas forward for Sugihara monoids and algebras in
subsequent papers, in the first instance in~\cite{CPfree}.

%hp1501  This belongs in the Note
%Consider a single-sorted, multi-carrier piggyback duality 
%for a class $\CA= \ISP(\M)$ 
%as compared with an equivalent duality based on a multisorted dual category.  In the former, all the information is encoded 
%in an alter ego with universe $M$.  In the latter the information 
%has been teased apart, so that it is carried on the union of multiple disjoint
%copies of~$M$ indexed by the set of carrier maps (these copies
% provide the sorts of the multisorted alter ego);  the relational structure consists of relations, operations, partial operations and constants, within individual sorts or linking pairs of sorts.  
%This deconstruction process, applicable not just to the alter ego
%itself but to all dual spaces of members of~$\CA$, provides 
%a kit  of components out of which we may hope to build alternative 
%forms of topological relational semantics for~$\CA$.  
%In particular, if $\M$ has a distributive lattice reduct, 
%we may expect to establish a translation to an 
% associated Priestley-style duality and, at the least, thereby 
%to gain  easy access to descriptions 
%of  the lattice reducts of the finitely generated free algebras.   

Quasivarieties generated by small finite distributive lattices with additional operations, with or without distinguished constants,   
 provided a rich source of examples on which Davey and Werner 
were able to test the power of the foundational theory. % in
%\cite{DW83}.  Many of their examples, including $\DM$ and $\class{Kl}$, had affinities with algebraic logic. 
  The emphasis here is on \emph{small}.  Davey and Werner's method relied on their 
NU Duality Theorem:  for a lattice-based quasivariety
$\ISP(\M)$ this yields a dualising alter ego
$(M; \mathbb S(\M^2), \Tp)$.  Taking this forward to arrive at a workable
duality 
necessitated describing \emph{all}  subalgebras of $\M^2$  
 and then
using entailment constructs to weed out superfluous relations.

\begin{ex}[Kleene algebras and lattices: 
hand-crafted single-sorted dualities]\label{Kleene1} 
Let $\boldsymbol 3 = (\{0,a,1\},
\land,\lor, \neg, 0,1\}$,  where $(\{0,a,1\}, \land, \lor)$ is the  distributive lattice with underlying order $0 < a < 1$ and $\neg$ is the involution given by   $\neg 0 = 1$, $\neg 1 = 0$ and $\neg a =a$.  The class~$\Kl$ of \defn{Kleene algebras} is $\CV(\three)$;  it coincides with $\ISP(\three)$.
Moreover, 
$\Kl$ is the unique proper subvariety of the variety $\DM$ of De Morgan algebras which covers Boolean algebras. Specifically, $\Kl$ is characterised within~$\DM$ 
by the law $(a\land \neg a) \land (b \lor \neg b) = a \land \neg a$
(which can equivalently be captured by $a\land \neg a \leq 
b \lor \neg b$).   
The class  of  \defn{Kleene lattices} 
is $\Kla :=\CV(\three_u)	=	\ISP(\three_u)$, where 
$\three_u = (\{0,a,1\},
\land,\lor, \neg)$ is obtained by omitting the constants from the language.

Davey and Werner's duality for $\Kl$ 
has alter ego
$\twiddle{\three}  = (\{0,a,1\};  \preccurlyeq,  -, K_0,\Tp)$,
where 
$\preccurlyeq$ is the  partial order with $()< a$, $1 < a$
and $0$ and $1$  incomparable,  $-$ is 
$\{ 0,a,1\}^2 \setminus \{  (0,1), (1,0)\} $ 
and $K_0=
\{  0,1\}$ \cite{DW83} (or see \cite{CD98}*{4.3.10}).  This was proved by listing all $11$ subalgebras of 
$\three^2$ and whittling down the list down by exhibiting 
entailments.

A minor adaptation yields an alter ego $\twiddle{\three_u}$ for $\three_u$, \textit{viz.}  we add the one-element subalgebra
$\mathbf a$, regarded as a nullary operation,  to~$\twiddle{\three}$.    (A detailed account is given in \cite{FG17}*{Section~4}. See also Example~\ref{ex:Kleene3} below.)
\end{ex}

The number  $3$ is small.
The ad hoc method in Example~\ref{Kleene1} will rarely be viable for quasivarieties  $\ISP(\M)$, 
 either by hand  or even with computer assistance:  the number of subalgebras of $\M^2$ is liable to be unmanageably large.
This led 
Davey and Werner to develop their piggyback method \cite{DW85}.  It applies to \emph{certain}
quasivarieties $\ISP(\M)$, 
where $\M$ is finite, with a distributive lattice reduct (with or without bounds). 
The  idea is to exploit  Priestley duality as it applies to the reducts to identify a dualising alter ego more refined than 
that delivered by the NU Duality Theorem.  
But in its original form 
piggybacking is contingent on there being 
a \emph{single} carrier map (a  homomorphism from (the reduct of) $\M$ into $\two$)   which, composed with the operations of $\M$, 
separates the points of~$\M$.
The separation condition fails for  
Kleene algebras, as explained in the introduction to \cite{DP87}.
It was their analysis of this `rogue' example which
led 
Davey and Priestley to introduce multisorted dualities and to develop a far-reaching generalisation of the piggyback method.  
The latter is 
applicable to any quasivariety or variety generated by a finite  distributive-lattice-based
algebra $\M$.   This subsumes a brute force version for a 
quasivariety $\ISP(\M)$ whereby all homomorphisms from (the reduct of)~$\M$ into $\two$ are employed as carrier maps.  This 
was what we used in~\cite{CPsug}*{Theorem~6.4}.

\begin{ex}[Kleene algebras and lattices:  piggyback dualities with two carrier
maps]\label{ex:Kleene2}
Consider~$\Kl$.  We  
 let $\boldsymbol 3$
be the algebra defined in Example~\ref{Kleene1} and let $
\fnt U (\boldsymbol 3)$ be its  reduct in $\CCD$.   Let $\beta^-$, respectively $\beta^+$,  the $\CCD$-homomorphism from $
\fnt {U} (\boldsymbol 3)$ to $\two$ sending $a$ to~$1$, respectively to 
$0$.   Then
\begin{multline*}
\left.  
\begin{matrix}
(\beta^-,\beta^-)^{-1}(\leq)\\
\\[.5ex]
(\beta^+,\beta^+)^{-1}(\leq) \ \\
(\beta^-,\beta^+)^{-1}(\leq)\ \\
(\beta^+,\beta^-)^{-1}(\leq)\  \\
\end{matrix}
\right\}
\quad \text{has a unique maximal subalgebra which is} \\[-2.5ex]
\left\{
\hbox{\hspace*{-.55cm}
\begin{tabular}{ll}
&$\preccurlyeq$,  the partial order with $0< a$, $1 < a$ and $0$, $1$ \\
 &               \hspace{5cm} 
 incomparable\\[.1ex]
&$\{ 0,a,1\}^2 \setminus \{  (0,1), (1,0)\}$  
\\
&$\{  (0,0),(1,1)\}$\\
&$\succcurlyeq := \preccurlyeq^\partial $  
\end{tabular}
}
\right.
\end{multline*}

We obtain a dualising  alter ego $\twiddle{\three}$ for 
$\boldsymbol 3$ by taking 
the first three of these relations and the discrete topology.
This duality is already strong.
We thereby 
recapture Davey and Werner's original  duality for $\Kl$.

For Kleene lattices, 
the Piggyback Strong Duality Theorem as formulated 
in \cite{CPsug}*{Theorem~3.3}, for single-sorted piggybacking over~$\CCD_u$, applies directly (or see \cite{FG17}*{Section~4}
for a treatment based on the NU Strong Duality Theorem).
\end{ex}

The dualities for $\Kl$ and $\Kla$  recalled in 
Example~\ref{Kleene1} 
 have 
 disadvantages.   
The roles
of the elements $\preccurlyeq$, $-$ and $K_0$ in the alter ego 
are far from obvious, so the dualities lack intuitive appeal, and the
axiomatisation of the dual categories (see \cite{CD98}*{4.3.9} and 
\cite{FG17}*{Proposition~4,3})  does little to assist.  In addition, obtaining  
$\F_{\Kl}(s)$, the free Kleene algebra  on~$s$ generators, from its  natural dual $\twiddle{\boldsymbol 3}^S$ is not the transparent process we could wish.

\begin{ex}[Kleene algebras: a rival duality]\label{ex:Kleene3}
 Cornish and Fowler \cite{CF79}
 proved that 
the category of Kleene algebras is dually equivalent to 
the category $\cat Y$ of \defn{Kleene spaces}.  This provides what would now be referred to as a \defn{restricted Priestley duality}%  
\footnote{The nomenclature is a little ambiguous and the term `enriched' is sometimes used instead of `restricted'.  Often both  adjectives  have a valid connotation,  with `restricted' referring to the forgetful functor
to Priestley spaces not in general being surjective, whereas `enriched'
refers to the dual objects being Priestley spaces with extra structure, usually relations or operations,  with appropriate topological conditions imposed.}; see~\cite{DG17}.
 Here
the objects of
$\cat Y$ are topologised structures $\CY = (Y; \leq ,g,\Tp)$ where 
$(Y; \leq ,\Tp)$ is a Priestley space and~$g$ is a continuous 
order-reversing involution on~$Y$ such that
$\forall y \, \bigl( y \geq g(y) \text{ or } y \leq g(y)\bigr)$;
 the  morphisms are continuous maps preserving $\leq$ and~$g$.  It is an easy exercise in  
Priestley duality to show that  
 the existence of a map~$g$, as in the definition of a Kleene space, is exactly what is needed to capture the Kleene 
negation on the Priestley second dual of the reduct of a  Kleene algebra.  The objective of Cornish and Fowler was to describe 
coproducts, and in particular free algebras, in $\Kl$ by using the duality,  This was successful, but the construction is 
convoluted.   

A version of Cornish and Fowler's duality exists for Kleene lattices, in which the dual category involves pointed Priestley spaces. 
\end{ex}

%hp2002  reworded to get round nasty hyphenation problem
Restricted 
Priestley dualities have the advantage of providing a 
 set-based representation, akin to Kripke  semantics but 
with dual spaces which are relational structures equipped also 
with topology.  
However,  
as  might be expected of second-hand dualities, restricted 
Priestley dualities 
often lack many of the  good features of a natural duality,
 such as 
direct access to free algebras, whose natural dual spaces are given by concrete powers.  
By contrast, Cornish and Fowler's use of their duality  to describe free Kleene algebras is far from transparent.  
For a finitely generated variety or
quasivariety  of distributive-lattice-based algebras,  a natural
duality and a restricted Priestley duality each has a role to play.
 But for duality methods to be at their most powerful we need to
be able translate backwards and forwards between the two 
dual categories.

We now play the multisortedness trump card.  
Moving to a dual category of multisorted structures  will make it
clear how to reconcile the single-sorted natural dualities 
 in~\ref{ex:Kleene2} 
and the restricted Priestley dualities in \ref{ex:Kleene3} once the former are  recast in terms of  dual categories of multisorted structures.

\begin{ex}[Kleene algebras:  two-sorted duality]\label{ex:Kleene4}  
Take disjoint copies of~$\three$, denoted $\three^-$
and $\three^+$. Define  carrier maps as before, allocating $\beta^-$ to $\three^-$ and $\beta^+$ to $\three^+$.  For temporary
convenience, we denote the universe of $\three^\pm$ by
 $M^\pm := \{0^\pm, a^\pm, 1^\pm\}$.  We form an alter ego
%lc0503: I changed the notation. The existing one looked really bad.
%however below in the next example you call this \CMT. I would use $\twiddle{\three^\pm}$   but I leave it to you.
%$\twiddle{\three^-,\three^+}$  
%hp0503  NO WAY!!!   $\twiddle{\three^\pm}$  OMIT SYMBOL FOR IT
$\CMT $ based on the set 
$\{  0^-, a^-,1^-\} \du \{  0^+, a^+,1^+\}$.  We equip 
this with the relations which are maximal subalgebras of 
sublattices $(\gamma, \delta)^{-1}(\leq)$, where 
$\gamma,\delta $ range over $\{\beta^-,\beta^+
\}$.  These binary relations are the same as those in \ref{ex:Kleene2} 
except that the coordinates now need to be tagged to indicate the sorts to which these refer.

\begin{figure}  [ht]
\begin{center}
\begin{tikzpicture}[scale=.5]
\begin{scope}
\node[label=above: {$\scriptstyle (\three^-,\preccurlyeq)$}]  (S-) at  (2,2) {};
\node  (idl-)   at  (4.5, 2.5)  {}; 
\node  (idr-)  at  (5.5, 2.5)  {}; 

\labnode{0,3}{S1-}{label=left:{$ \scriptstyle a^- $}}{.035}{}
%	\node[label=left:{$ \scriptstyle a^- $}] (S1-l) at (0,3) {};
	\labnode{1.5,1.5}{S2-}{label=right:{$\scriptstyle\,1^- $} }{.035}{}
	%\node [label=right:{$\scriptstyle1^- $}] (S2-r) at (1.5,1.5) {};
\labnode{-1.5,1.5}{S3-}{ label=left:{$  \scriptstyle 0^-$}}{.035}{}
	%	\node [label=left:{$  \scriptstyle 0^-$}] (S3-l) at (-1.5,1.5) {};

   		\draw  (S1-) -- (S2-);
		\draw  (S1-) -- (S3-);

	\end{scope}

\begin{scope}
\node[label=below:  {$\scriptstyle (\three^+,\succcurlyeq)$}]  (S-) at  (2,-.3) {};
\node  (idl+)  at  (4.5, -.5) {};
\node  (idr+)  at  (5.5, -.5) {} ;

\labnode{0,-1}{S1+}{label=left:{$\scriptstyle a^+ $}}{.035}{}
	%\node [label=left:{$\scriptstyle a^+ $}] (S1+) at (0,-1) {};
\labnode{1.5,0.5}{S2+}{label=right:{$\scriptstyle\, 1^+$}}{.035}{}
%\node [label=right:{$\scriptstyle 1^+ $}] (S2+r) at (1.5,0.5) {};
\labnode{-1.5,0.5}{S3+}{label=left:{$\scriptstyle 0^+$}}{.035}{}		
%\node [label=left:{$\scriptstyle 0^+$}] (S3+l) at (-1.5,0.5) {};

 \draw  (S1+) -- (S2+);
\draw  (S1+) -- (S3+);

\end{scope}

\draw[thick, dotted] (-1.5,1) ellipse (.35cm and 1cm);
\draw[thick,dotted] (1.5,1) ellipse (.35cm and 1cm);

\begin{scope}[xshift=-10cm]
\node  (idl-)   at  (4.8, 2.5)  {}; 
\node  (idr-)  at  (5.2, 2.5)  {}; 
\node  (idl+)  at  (4.8, -1) {};
\node  (idr+)  at  (5.2, -1) {};

\draw[-latex,dashed] (idl+)  to node [xshift =-3pt, yshift=3pt, swap]  
{$\scriptstyle  \id_{+-} \ \quad $} (idl-);
\draw[latex-,dashed] (idr+)  to node [xshift = 2pt,yshift=-2pt] {$\quad \ \  \scriptstyle \id_{-+} $}   (idr-);
\end{scope}

\end{tikzpicture}
\end{center}
\caption{Two-sorted alter ego for $\Kl$ \label{fig:altegoKl}}
\end{figure}

Figure~\ref{fig:altegoKl} depicts the alter ego, and its relations.

A key feature of the multisorted approach is that duals of
free algebras are calculated `by sorts'.  In the case of 
$\Kl$ this means that the dual of $\F_{\Kl}(s)$, the free Kleene
algebra on $s$ generators, has universe 
$(M^-)^s \du (M^+)^s$, with $\preccurlyeq$, $-$ and $K_0$
lifted pointwise.   
\end{ex} 

Davey and Priestley in \cite{DP87}*{Theorem~3.8} 
%hp2201 ref completed
presented the first 
systematic translation process for toggling between natural
dualities and  their restricted Priestley duality counterparts.  
The context there, and in \cite{CD98}*{Section~7.6} too,  was Ockham algebras::  within a much wider
framework, Kleene algebras 
provide the simplest non-trivial example.  
%hp2201 rephrased 
The results given 
 in  \cite{DP87}*{Section~3}  apply to Ockham algebra quasivarieties.  They 
are heavy on notation  and hide the simplicity of the ideas.  
%%%%%%%%%%%%%%%%%

\begin{ex}[Reconciliations: the best of both worlds]\label{ex:Kleene5}
We don't give a full translation, preferring to illustrate the process as it applies to $\F_{\Kl}(1) $ and $\F_{\Kl}(2)$.  
The dual spaces of these algebras are, respectively, $\CMT$ and 
$\CMT^2$. We may view the union of the multisorted piggyback relations,
\[
\preccurlyeq^- \, \cup \, \succcurlyeq^+ \, \cup \, \{0^-,0^+), (1^-,1^+)\} \,  \cup \, (M^+\times M^-) \setminus
\{ (0^+,1^-), (1^+,0^-)\} 
\]
as a preorder on $M^-\cup M^+$.
When we quotient by  $\preccurlyeq \, \cap\, \succcurlyeq $ to obtain a partial order, 
the each of  the pairs   $0^+$, $0^-$ and $1^-$, $1^+$ 
collapses to a point.  On the resulting poset, $\two^2$, the linking maps
$\id_{-+}$ and $\id_{+-}$ induce a well-defined order-reversing
involution~$g$.   We have  obtained the Kleene space 
which is the Cornish--Fowler dual of $\F_{\Kl}(1)$.  

With $
\CMT^2$ we proceed similarly, remembering that the power
is formed `by sorts'; see Figure~\ref{fig:dualKl2}.  
The natural dual of $\F_{\Kl}(2)$ is shown in 
Figure~\ref{fig:dualKl2}(a).  Quotienting, 
we obtain the 
ordered set~$Y$ shown in~(b).    The isomorphisms $\id_{-+}$ and 
$\id_{+-}$ linking the two sorts encode, after the quotienting, 
the map~$g$ in the restricted Priestley duality.  
The Kleene  example is particularly simple because the map~$g$ is determined by the order structure.  

Denote the quotienting map by~$q$.  Then, in the quotient,
$\{\, y \mid y \geq  g(y)\,\}  = \beta^-(\three^-)$ and  
$\{\, y \mid y \leq  g(y)\,\}  =  \beta^+(\three^+)$.  
 We are looking 
here at the dual spaces of a very particular algebra in 
$\Kl$, but this behaviour is exhibited  quite generally.
This reflects the fact that single-carrier, single-sorted
piggybacking does not work.  See \cite{CD98}*{Section~7.2}, \cite{DP87} and \cite{CPcop}*{Section~3}
for detailed explanations.

Likewise to find the cardinality and the structure of
a free Kleene algebra $\F_{
\Kl}(s)$, a good way to proceed is to form its
2-sorted natural dual and then to pass to the restricted Priestley dual.  

For $\Kla$, the translation works in the same way, but now 
piggybacking is over $\CCD_u$; when recapturing the reduct of
an algebra from its $\CCD_u$  the duality used is that between 
$\CCD_u$  and doubly pointed Priestley spaces, as in \cite{CD98}*{Section~1.2 and Theorem~4.3.2}.
%hp2201  check exact ref.
The lattice reduct of $\F_{\Kl}(s)$ has a join-irreducible top
and meet-irreducible bottom.  Simply delete these elements to
obtain $\F_{\Kla}(s)$.

\begin{figure}  [ht]
\begin{center}
\begin{tikzpicture}[scale=.36]
\begin{scope}[xshift=8.5cm]  %ok
\draw [-latex,decorate,decoration={snake,amplitude=1pt,segment length=5pt,
                pre length=3pt,post length=3pt}]
%wiggly] 
(-2,1) -- (2,1); 
\end{scope}
                                                                                                             %========================

\begin{scope}[xshift=-2.5cm]
\posnode{0,3}{S01-}{.05}
\posnode{1.5,1.5}{S02-}{.035}  % at (1.5,1.5) {}; 
\posnode{-1.5,1.5}{S03-}{.035}

   		\draw (S01-) -- (S02-);
		\draw (S01-) --  (S03-);

\draw[thick,dotted] (-1.5,1) ellipse (.5cm and 1cm);
\draw[thick,dotted] (1.5,1) ellipse (.5cm and 1cm);

	\end{scope}

%%=====================================

\begin{scope}[xshift=2.5cm]
	\posnode{0,3}{S11-} {.035}
		\posnode{1.5,1.5}{S12-}{.035}
		\posnode{-1.5,1.5}{S13-}{.035}

   		\draw  (S11-) -- (S12-);
		\draw  (S11-) -- (S13-);

\draw[thick,dotted] (-1.5,1) ellipse (.5cm and 1cm);
\draw[thick,dotted] (1.5,1) ellipse (.5cm and  1cm);
	\end{scope}
%===========================

\begin{scope}[yshift=1.2cm]
	\posnode{0,3}{Sa1-}{.05}
		\posnode{1.5,1.5}{Sa2-}{.035}
		\posnode{-1.5,1.5}{Sa3-}{.035}

\end{scope}

%%%%%%%%%%%%%%%%%%%%%%%%%%%%%%%%%%%%%%%%
%%%%%%%%%%%%%%%%%%%%%%%%%%%%%%%%%%%%%%

   		\draw (Sa1-) -- (Sa2-);
		\draw (Sa1-) -- (Sa3-);

%================================

%top left end 
%%%%%%%%%%%%%%%%%%%%%%%%%%%%%%%%%%%%%%%%%%%%

%%start bottom left 

\begin{scope}[xshift=-2.5cm]
	\posnode{0,-1}{S01+}{.05} 
		\posnode{1.5,0.5}{S02+}{.035}
		\posnode{-1.5,0.5} {S03+}{.035}

   		\draw (S01+)-- (S02+);
		\draw  (S01+) -- (S03+);

\end{scope}

%=================================

\begin{scope}[xshift =2.5cm]
	\posnode{0,-1}{S11+} {.05}
		\posnode{1.5,0.5}{S12+}{.035}
		\posnode{-1.5,0.5} {S13+}{.035}

   		\draw  (S11+) -- (S12+);
		\draw  (S11+) -- (S13+);

\end{scope}
%==========================

\begin{scope}[yshift=-1.2cm]
	\posnode{0,-1}{Sa1+}{.05} 
		\posnode{1.5,0.5}{Sa2+}{.035}
		\posnode{-1.5,0.5}{Sa3+}{.035}

   		\draw  (Sa1+) -- (Sa2+);
		\draw (Sa1+) -- (Sa3+);

\end{scope}

%\end{scope}
%\end{tikzpicture}
%
%\end{center}
%\end{figure}
%\end{document}

%=======================================

	\draw (S01-)--(Sa1-);
	\draw  (S03-)--(Sa3-);
    \draw  (S02-)--(Sa2-);

	\draw  (S03+)--(Sa3+);
	\draw  (S01+)--(Sa1+);
	\draw  (S02+)--(Sa2+);

	\draw  (S11-)--(Sa1-);
	\draw  (S13-)--(Sa3-);
\draw  (S12-)--(Sa2-);

\draw  (S11+)--(Sa1+);
	\draw (S13+)--(Sa3+);
\draw  (S12+)--(Sa2+);
%%

%

%+++++++++++++++ links between components 

\begin{scope}[xshift=-12.5cm]
\node  (idl-)   at  (4.8, 2.5)  {}; 
\node  (idr-)  at  (5.2, 2.5)  {}; 
\node  (idl+)  at  (4.8, -1) {};
\node  (idr+)  at  (5.2, -1) {};

             \draw[-latex,dashed] (idl+)  to node [xshift =-3pt, yshift=3pt, swap]  
{$\scriptstyle  \id_{+-} \ \quad $} (idl-);
\draw[latex-,dashed] (idr+)  to node [xshift = 2pt,yshift=-2pt] {$\quad \ \  \scriptstyle \id_{-+} $}   (idr-);
\end{scope}   %link maps between sorts 

\node[label=below: {(a) 2-sorted dual}]  (laba) at (0,-2.8) {};

%%%%%%%%%%%%%%%%%%%%%%%%%%%%%%%%%%%%%%%%%%
%%%%%%%%%%%%%%%%%%%%%%%%%%%%%%%%%%%%%%%%%%

%end left part

%%%%%%%%%%%%%%%%%%%%%%%%%%%%%%%%%%%%%%%%%%%%%

\begin{scope}[xshift=16cm]   %start of (b)

\begin{scope}[xshift=1.2cm, yshift=.5cm]   %%g-map  OK

\node  (g-)  at  (5.2, 2.5)  {}; 

\node  (g+)  at  (5.2, -1) {};
\draw[latex-latex,dashed] (g+)  to node [xshift = 2pt,yshift=-2pt] {$\quad \ \  \scriptstyle g$}   (g-);
\end{scope}   %end g-map 

\begin{scope}[xshift=-2.5cm]
	\posnode{0,3}{S01-} {.05}
		\labnode{1.5,1.5}{S02-}{fill=black}{.035}{}
		\labnode{-1.5,1.5}{S03-}{fill=black}{.035}{}

   		\draw  (S01-) -- (S02-);
		\draw  (S01-) -- (S03-);

	\end{scope}

%==============================

\begin{scope}[xshift=2.5cm]
	\posnode{0,3}{S11-}{.05}
		\labnode{1.5,1.5}{S12-}{fill=black}{.035}{}
		\labnode{-1.5,1.5}{S13-}{fill=black}{.035}{}

   		\draw  (S11-) -- (S12-);
		\draw (S11-) -- (S13-);

	\end{scope}  

%===================================

\begin{scope}[yshift=1.2cm]  
	\posnode{0,3}{Sa1-}{.05}
		\posnode{1.5,1.5}{Sa2-}{.035}
		\posnode{-1.5,1.5}{Sa3-}{.035}

   		\draw (Sa1-) -- (Sa2-);
		\draw (Sa1-) -- (Sa3-);

	\end{scope}
   %end of top right part

%%%%%%%%%%%%%%%%%%%%%%%%%%%%%%%%%%%%%%%%%%%%
\begin{scope}[yshift=27pt]  %start  bottom  right 
\begin{scope}[xshift=-2.5cm]
	\posnode{0,-1}{S01+} {.05} 
		\posnode{1.5,0.5}{S02+}{.035}
		\posnode{-1.5,0.5}{S03+}{.035}

   		\draw (S01+) -- (S02+);
		\draw (S01+) -- (S03+);
	
\end{scope}
%-------------------------------------
\begin{scope}[xshift =2.5cm]
	\posnode{0,-1}{S11+}{.05} 
		\posnode{1.5,0.5}{S12+}{.035}
		\posnode{-1.5,0.5}{S13+}{.035}

   		\draw (S11+) -- (S12+);
		\draw (S11+) -- (S13+);

\end{scope}
%====================================

\begin{scope}[yshift=-1.2cm]
	\posnode{0,-1}{Sa1+}{.05}
		\posnode{1.5,0.5}{Sa2+}{.035}
		\posnode{-1.5,0.5}{Sa3+}{.035}

\end{scope}

   		\draw  (Sa1+) -- (Sa2+);
		\draw  (Sa1+) -- (Sa3+);

\end{scope}

	\draw  (S01-)--(Sa1-);
	\draw  (S03-)--(Sa3-);
    \draw  (S02-)--(Sa2-);

	\draw (S03+)--(Sa3+);
	\draw  (S01+)--(Sa1+);
	\draw  (S02+)--(Sa2+);

	\draw  (S11-)--(Sa1-);
	\draw  (S13-)--(Sa3-);
\draw (S12-)--(Sa2-);

\draw  (S11+)--(Sa1+);
	\draw  (S13+)--(Sa3+);
\draw (S12+)--(Sa2+);
\node[label=above: {(b)  Cornish--Fowler dual}] (labb) at  (0,-5.2)
{};  
\end{scope} 

 %end of (b)   --the big shift right 
\end{tikzpicture}
\vspace*{-0.5cm}
\end{center}

\caption{Dual spaces of $\F_{\Kl}(2)$ \label{fig:dualKl2}}
\end{figure}

\end{ex}

 Further examples of translation between dualities of different types are given in \cites{HPLuk,DBlat,CPGo}  and a systematic account of this process, for the underlying lattices,  
is presented in \cite{CPcop}. 
However one would not expect 
 straightforward, or uniform, procedures for capturing  
algebraic operations of arity $> 1$ in dual terms, and for 
translating  between different types of dual representations.  

Discussion of this important aspect of our methodology, 
and its application to finitely generated free Sugihara algebras and monoids,  is deferred to a later paper.

%%%%%%%%%%%%%%%%%%%%%%%%%%%%%%%%%%%%%%%%%%%%%
\begin{bibdiv}
\begin{biblist}

\bib{AB75}{book}{
      author={Anderson, A.~R.},
      author={Belnap, N.~D.},
       title={Entailment},
   publisher={Princeton University Press},
        date={1975},
      volume={1},
}

\bib{BD86}{article}{
      author={Blok, W.~J.},
      author={Dziobiak, W.},
       title={On the lattice of quasivarieties of {S}ugihara algebras},
        date={1986},
     journal={Studia Logica},
      volume={45},
       pages={275\ndash 280},
}

\bib{BT02}{article}{
      author={Blount, K,},
      author={Tsinakis, C,},
       title={The structure of residuated lattices},
        date={2003},
     journal={Internat. J. Algebra Comput.},
      volume={13},
      number={4},
       pages={437\ndash 461},
}

\bib{CPsug}{article}{
      author={Cabrer, L.~M.},
      author={Priestley, H.~A.},
       title={Sugihara algebras: admissibility algebras via the test spaces
  method},
        note={{S}ubmitted; preprint available at
  \url{www/arxiv.org/abs/1809.07816v2}},
}

\bib{CPcop}{article}{
      author={Cabrer, L.~M.},
      author={Priestley, H.~A.},
       title={Coproducts of distributive lattice-based algebras},
        date={2014},
     journal={Algebra Universalis},
      volume={72},
      number={3},
       pages={251\ndash 286},
}

\bib{DBlat}{article}{
      author={Cabrer, L.~M.},
      author={Priestley, H.~A.},
       title={Distributive bilattices from the perspective of natural duality
  theory},
        date={2015},
     journal={Algebra Universalis},
      volume={73},
      number={2},
       pages={103\ndash 141},
}

\bib{CPGo}{article}{
      author={Cabrer, L.~M.},
      author={Priestley, H.~A.},
       title={G\"{o}del algebras: interactive dualities and their
  applications},
        date={2015},
        ISSN={0002-5240},
     journal={Algebra Universalis},
      volume={74},
      number={1-2},
       pages={87\ndash 116},
}

\bib{CFMP}{article}{
      author={Cabrer, L.~M.},
      author={Freisberg, B.},
      author={Metcalfe, G.},
      author={Priestley, H.~A.},
       title={Checking admissibility using natural dualities},
        date={2018},
      volume={20},
      number={1},
        note={preprint available at {\url{www.arXiv:1801.02046}}},
}

\bib{CPfree}{article}{
      author={Cabrer, L.~M.},
      author={Priestley, H.~A.},
       title={Sugihara algebras and {S}ugihara monoids: free algebras},
        note={Manuscript},
}

\bib{CDquest}{article}{
      author={Clark, D.~M.},
      author={Davey, B.~A.},
       title={The quest for strong dualities},
        date={1995},
     journal={J. Austral. Math. Soc. Ser. A},
      volume={58},
       pages={248\ndash 280},
}

\bib{CD98}{book}{
      author={Clark, D,~M.},
      author={Davey, B.~A.},
       title={Natural {D}ualities for the {W}orking {A}lgebraist},
      series={Cambridge Studies in Advanced Mathematics},
   publisher={Cambridge University Press, Cambridge},
        date={1998},
      volume={57},
}

\bib{CF79}{article}{
      author={Cornish, W.~H.},
      author={Fowler, P.~R.},
       title={Coproducts of {K}leene algebras},
        date={1979},
     journal={J. Austral. Math. Soc. Ser. A},
      volume={27},
      number={2},
       pages={209\ndash 220},
}

\bib{DG17}{article}{
      author={Davey, B.~A.},
      author={Gair, A.},
       title={Restricted {P}riestley dualities and discriminator varieties},
        date={2017},
     journal={Studia Logica},
      volume={105},
      number={4},
       pages={843\ndash 872},
}

\bib{DHP12}{article}{
      author={Davey, B.~A.},
      author={Haviar, M.},
      author={Pitkethly, J.~G.},
       title={Full dualisability is independent of the generating algebra},
        date={2012},
     journal={Algebra Universalis},
      volume={67},
      number={3},
       pages={257\ndash 272},
}

\bib{DHP16}{article}{
      author={Davey, B.~A.},
      author={Haviar, M.},
      author={Priestley, H.~A.},
       title={Piggyback dualities revisited},
        date={2016},
        ISSN={0002-5240},
     journal={Algebra Universalis},
      volume={76},
      number={2},
       pages={245\ndash 285},
}

\bib{DP87}{article}{
      author={Davey, B.~A.},
      author={Priestley, H.~A.},
       title={Generalised piggyback dualities and applications to {O}ckham
  algebras},
        date={1987},
     journal={Houston J. Math.},
      volume={13},
      number={2},
       pages={151\ndash 198},
}

\bib{DT03}{article}{
      author={Davey, B.~A.},
      author={Talukder, M.~R.},
       title={Functor category dualities for varieties of {H}eyting algebras},
        date={2003},
     journal={J. Pure Appl. Algebra},
      volume={178},
      number={1},
       pages={49\ndash 71},
}

\bib{DW83}{incollection}{
      author={Davey, B.~A.},
      author={Werner, H.},
       title={Dualities and equivalences for varieties of algebras},
        date={1983},
   booktitle={Contributions to lattice theory ({S}zeged, 1980)},
      series={Colloq. Math. Soc. J\'{a}nos Bolyai},
      volume={33},
   publisher={North-Holland, Amsterdam},
       pages={101\ndash 275},
}

\bib{DW85}{article}{
      author={Davey, B.~A.},
      author={Werner, H.},
       title={Piggyback-{D}ualit\"{a}ten},
        date={1985},
     journal={Bull. Austral. Math. Soc.},
      volume={32},
      number={1},
       pages={1\ndash 32},
}

\bib{BD79}{article}{
      author={Davey, B.~A.},
       title={On the lattice of subvarieties},
        date={1979},
     journal={Houston J. Math.},
      volume={5},
       pages={183\ndash 192},
}

\bib{Du70}{article}{
      author={Dunn, J.~M.},
       title={Algebraic completeness for {R}-mingle and its extensions},
        date={1970},
     journal={J. Symbolic Logic},
      volume={35},
       pages={1\ndash 13},
}

\bib{FG17}{article}{
      author={Fussner, W,},
      author={Galatos, N.},
       title={Categories of models of {R}-mingle},
        date={2017},
        note={{P}reprint, available at {\url{www.arXiv:1710.04256v2}}},
}

\bib{GR12}{article}{
      author={Galatos, N.},
      author={Raftery, J.~G.},
       title={A category equivalence for odd {S}ugihara monoids and its
  applications},
        date={2012},
     journal={J. Pure Appl. Algebra},
      volume={216},
      number={10},
       pages={2177\ndash 2192},
}

\bib{GR15}{article}{
      author={Galatos, N.},
      author={Raftery, J.~G.},
       title={Idempotent residuated structures: some category equivalences and
  their applications},
        date={2015},
     journal={Trans. Amer. Math. Soc.},
      volume={367},
      number={5},
       pages={3189\ndash 3223},
}

\bib{GJKO}{book}{
      author={Galatos, N.},
      author={Jipsen, P.},
      author={Kowalski, T.},
      author={Ono, H.},
       title={Residuated {L}attices: an {A}lgebraic {G}limpse at
  {S}ubstructural {L}ogics},
      series={Studies in Logic and the Foundations of Mathematics},
   publisher={Elsevier B. V., Amsterdam},
        date={2007},
      volume={151},
}

\bib{KK}{book}{
      author={Kreisel, G.},
      author={Krivine, J.-L.},
       title={Elements of {M}athematical {L}ogic. {M}odel {T}heory},
      series={Studies in Logic and the Foundations of Mathematics},
   publisher={North-Holland Publishing Co., Amsterdam},
        date={1967},
}

\bib{GM16}{article}{
      author={Metcalfe, G,},
       title={An {A}vron rule for fragments of {R}-mingle},
        date={2016},
     journal={J. Logic Comput.},
      volume={26},
      number={1},
       pages={381\ndash 393},
}

\bib{MRW18}{article}{
      author={Moraschini, T.},
      author={Raftery, J.G.},
      author={Wannenburg, J.J.},
       title={Varieties of {D}e {M}organ monoids: minimality and irreducible
  algebras},
        date={2018},
        note={Preprint available at {\url{www.arXiv:1801.06650}}},
}

\bib{OR07}{article}{
      author={Olson, J,~S.},
      author={Raftery, J.~G.},
       title={Positive {S}ugihara monoids},
        date={2007},
     journal={Algebra Universalis},
      volume={57},
      number={1},
       pages={75\ndash 99},
}

\bib{HPLuk}{article}{
      author={Priestley, H.~A.},
       title={Natural dualities for varieties of {$n$}-valued {\l } ukasiewicz
  algebras},
        date={1995},
     journal={Studia Logica},
      volume={54},
      number={3},
       pages={333\ndash 370},
}

\end{biblist}
\end{bibdiv}

%%\begin{thebibliography}{99}
%\bibliographystyle{amsplain}
%\bibliography{sugmult-bibfin}
%
%%\end{thebibliography}
\end{document}